\newif\ifarch
\newif\ifjour
\newif\ifdraft
\newtheorem{thm}{Theorem}[section]
\newtheorem{prop}[thm]{Proposition}
\newtheorem{lemma}[thm]{Lemma}
\newtheorem{cor}[thm]{Corollary}
\theoremstyle{definition}
\newtheorem{defn}[thm]{Definition}
\theoremstyle{remark}
\newtheorem{remark}[thm]{Remark}
\newtheorem{example}{Example}
\newcommand{\examoreref}{}
\def\rankD{2d} 
\def\rankDplus{d} 
\def\dimM{m} 
\def\dimG{n} 
\def\rankVinf{q}
\def\dimN{p} 
\def\rankH{h} 
\newcommand{\myvf}[1]{{\,\mathrm{#1}}}
\newcommand{\vD}{\myvf D}
\newcommand{\vN}{\myvf N}
\newcommand{\vS}{\myvf S}
\newcommand{\vT}{\myvf T}
\newcommand{\vU}{\myvf U}
\newcommand{\vW}{\myvf W}
\newcommand{\vX}{\myvf X}
\newcommand{\vY}{\myvf Y}
\newcommand{\vZ}{\myvf Z}
\newcommand{\vZR}{\vZ^R} 
\newcommand{\vZL}{\vZ^L}  
\newcommand\ra{{k}}
\newcommand\rb{\ell}
\newcommand\Ug{{u}}
\newcommand{\pib}[1]{\overline{\pi^{#1}}}
\newcommand\vtheta{\boldsymbol{\theta}}
\newcommand\vpi{\boldsymbol{\pi}}
\newcommand\vpibar{\overline{\vpi}}
\newcommand\veta{\boldsymbol{\eta}}
\newcommand\vetabar{\overline{\veta}}
\newcommand\vsigma{\boldsymbol{\sigma}}
\newcommand\sigmabar{\overline{\sigma}}
\newcommand\vsigmabar{\overline{\vsigma}}
\newcommand\pibar{\overline{\pi}}
\newcommand\etabar{\overline{\eta}}
\newcommand\thetau{\vartheta}
\newcommand\thetaubi{\overline{\vartheta^i}}%
\newcommand\thetaubj{\overline{\vartheta^j}}%
\newcommand\thetaubk{\overline{\vartheta^k}}%
\newcommand\thetaubl{\overline{\vartheta^l}}%
\newcommand\npt{\mathrm n}
\newcommand\tfA{\mathscr{A}} 
\newcommand\dD{\mathcal D} 
\newcommand\DD{\mathcal D}
\newcommand\Dplus{\dD_+} 
\newcommand\Dminus{\dD_-}
\newcommand\Dplusinf{\dD_+^{(\infty)}}
\newcommand\Dminusinf{\dD_-^{(\infty)}}
\newcommand\Deltaplus{\Delta_+} 
\newcommand\Deltaminus{\Delta_-}
\newcommand\dDC{\dD\otimes \C}
\newcommand\dS{\mathscr S}
\newcommand{\JJ}{\mathsf J} 
\newcommand\rH{H}
\newcommand\rE{E}
\newcommand\rI{I}
\newcommand\rV{V}
\newcommand\rVb{\overline{V}}
\newcommand\VB{Vert}
\newcommand\vv{\mathsf v} 
\newcommand\sysE{\mathcal E}
\newcommand\sysI{\mathcal I}
\newcommand\IC{I}
\newcommand\sysH{\mathcal H}
\newcommand\subalg{{}_{\operatorname{alg}}}
\newcommand\subdiff{{}_{\operatorname{diff}}}
\newcommand{\Jhol}[1]{J^{#1}_{\operatorname{hol}}}
\newcommand\Vinf{V^{(\infty)}} 
\newcommand\Invts{{\mathscr H}(V)}
\newcommand\calI{\mathcal I} 
\newcommand\calE{\mathcal E}
\newcommand\calH{\mathcal H}
\newcommand\calV{\mathcal V}
\newcommand\calW{\mathcal W}
\newcommand\Upset{\scrV} 
\newcommand\Udown{\scrU}
\newcommand\Wdown{\scrW}
\newcommand\EDelta{\Sigma}
\newcommand{\zbar}{\overline{z}}
\newcommand{\zbars}[1]{\overline{z^{#1}}}
\newcommand{\Ubar}{\overline{U}}
\newcommand{\Wbar}{\overline{W}}
\newcommand{\vUbars}[1]{\overline{\vU_{#1}}}
\newcommand{\pibars}[1]{\overline{\pi^{#1}}}
\newcommand{\thetabars}[1]{\overline{\theta^{#1}}}
\newcommand{\etabars}[1]{\overline{\eta^{#1}}}
\newcommand{\sigmabars}[1]{\overline{\sigma^{#1}}}
\newcommand{\bq}{\mathbf{q}}
\newcommand{\bqg}{\bq_G}
\newcommand\piG{\pi_G} 
\newcommand\piK{\pi_K}
\newcommand\qG{{\mathbf{q}}_{G}} 
\newcommand{\ws}[1]{w_{#1}}
\newcommand{\wbars}[1]{\overline{\ws{#1}}} 
\newcommand{\Id}{\operatorname{Id}}
\newcommand{\fg}{\mathfrak{g}}
\newcommand{\fk}{\mathfrak{k}}
\newcommand\muL{\mu_L} 
\newcommand\muR{\mu_R}
\newcommand{\Gam}{\boldsymbol\Gamma}
\newcommand{\gam}{\mbox{\raisebox{.43ex}{$\boldsymbol\gamma$}}} 
\newcommand{\Slice}{\mathcal{S}} 
\newcommand{\Sup}{\widehat\Slice}
\newcommand\C{\mathbb C}
\newcommand\R{\mathbb R}
\newcommand{\ri}{\mathrm i} 
\newcommand{\mpt}{{\mathrm m}}
\newcommand\spanc{\operatorname{span}_{\C}}
\newcommand\spanr{\operatorname{span}_{\R}}
\renewcommand\Re{\operatorname{Re}}
\renewcommand\Im{\operatorname{Im}}
\newcommand\rk{\operatorname{rank}}
\newcommand\rank{\operatorname{rank}}
\newcommand{\di}{\partial}
\newcommand{\dib}[1]{\partial_{{#1}}} 
\newcommand\dx{\,dx} 
\newcommand\dy{\,dy}
\newcommand\dz{\,dz}
\newcommand\DI{{w}} 
\newcommand\scrU{\mathscr{U}} 
\newcommand\scrV{\mathscr{V}} 
\newcommand\scrW{\mathscr{W}}
\newcommand\tC{\widetilde{C}}  
\newcommand\intprod{\mathbin{\raisebox{.4ex}{\hbox{\vrule height .5pt width
5pt depth 0pt %
         \vrule height 3pt width .5pt depth 0pt}}}}
\newcommand{\hook}{\intprod}
\DeclareFontFamily{U}{mathx}{}
\DeclareFontShape{U}{mathx}{m}{n}{<-> mathx10}{}
\DeclareSymbolFont{mathx}{U}{mathx}{m}{n}
\DeclareMathAccent{\widehat}{0}{mathx}{"70}
\DeclareMathAccent{\widecheck}{0}{mathx}{"71}
\newcommand{\defenumstyle}{{[\roman*]}}
\newcommand{\thmenumstyle}{{(\arabic*)}}
\newcommand{\alfenumstyle}{{(\alph*)}}
\newcommand\defem{\sl} 
\newcommand\restr{\big\vert}
\def\realform{real version}
\begin{document}
\ifarch
\title{The Geometry of Darboux-Integrable Elliptic Systems}
\fi
\ifjour
\title[Darboux Integrable Elliptic Systems]{Darboux Integrability and the Solution of Elliptic PDE Systems via Holomorphic Data}
\fi

\author{Mark E. Fels}
\address{Department of Mathematics and Statistics, Utah State University}
\email{mark.fels@usu.edu}
\author{Thomas A. Ivey}
\address{Dept. of Mathematics, College of Charleston}
\email{iveyt@cofc.edu}
\date{\today}

\begin{abstract} We characterize real elliptic differential systems whose solutions can be expressed
in terms of holomorphic solutions to an associated holomorphic Pfaffian system $\sysH$ on a complex manifold. In particular, 
these elliptic systems arise as quotients by a group $G$ of the real differential system generated by the real and imaginary parts of $\sysH$, such that $G$ is the real form of a complex Lie group $K$ which is a symmetry group of $\sysH$. 
Subject to some mild assumptions, we show that such elliptic systems are characterized by a property known as Darboux integrability.  Examples discussed include first- and second-order elliptic PDE and PDE systems in the plane.
\end{abstract}
\maketitle

\ifarch
\tableofcontents
\fi


\newcommand\Vhh{\widehat{V}}
\newcommand\Vhhinf{\Vhh^{(\infty)}}
\newcommand\Vc{\widecheck V} 
\newcommand\Vcinf{\Vc^{(\infty)}} 
\newcommand\Nh{\hat{N}}
\newcommand\Nc{\check{N}}
\newcommand\Eh{\hat{E}}
\newcommand\Ec{\check{E}}
\newcommand\sysEh{\widehat{\sysE}}
\newcommand\sysEc{\widecheck{\sysE}}
\newcommand\bpi{\mathbf{\pi}}
\def\intem{\bf}

\section{Introduction}
Classically, a Darboux integrable (DI)  
second-order partial differential equation  (PDE) in the plane (i.e., for one function of two independent variables) is a hyperbolic PDE compatible with a secondary system  of PDE 
which together make a system of total differential equations satisfying the Frobenius integrability condition. 
This in turn can lead  to a closed form general solution to the original PDE (see, e.g., the survey \cite{Iantalk}). 
In the pioneering work \cite{Vessiot} Ernest Vessiot 
found an alternative method
for finding the general solution to Darboux integrable equations through a generalization of equations of Lie type on homogeneous spaces (see Chapter 12 in \cite{Stormark}). 
This idea was expanded upon and formalized by Anderson, Fels and Vassiliou \cite{AFV} to a class of hyperbolic exterior differential systems whose solutions could be obtained using the theory of Lie group quotients of exterior differential systems. The quintessential example here is the Liouville equation $u_{xy}=2e^u$, for which the general solution can be expressed in terms of the two generic 1-variable functions $f(x),g(y)$ as
\begin{equation}
u(x,y)= \ln \dfrac{f'(x) g'(y)}{(f(x) + g(y))^2}
\label{Hliou}
\end{equation}
This solution can be derived by representing the Liouville equation as the quotient of the product of two jet spaces with their contact system  by an action of $SL(2,\R)$. The 
details of this construction can be found in Example 3.1 of \cite{AFBA}. The article \cite{AFV} shows how the Lie algebra of $SL(2,\R)$ arises as an invariant of the Liouville equation and that it allows for the construction of the general solution to the equation through this quotient construction.  In recognition of the work of Vessiot this algebra was termed  {\intem the Vessiot algebra for  the Liouville equation} in \cite{AFV}.


In this paper, our ultimate goal is to characterize elliptic differential equations whose general solution admit formulas similar to that in \eqref{Hliou}, and
to show how the theory of group quotients plays a role similar to the hyperbolic case. For example, there are two inequivalent versions of the elliptic Liouville equation, namely $u_{xx}+u_{yy} \pm 2 e^u=0$.  
As is well known (see \S13.3 in \cite{Dubrovin}), these equations arises for $e^{u}$ to be the conformal factor modifying a flat metric in the plane to have constant positive or negative Gauss curvature.  
The  general solutions for these two equations have a form similar to the one above for the hyperbolic Liouville equation and are given in terms of a holomorphic function $f(z)$ by
\begin{equation}
u_{+}(x,y)=\ln \frac{ 4 f(z)' \overline {f(z)'} }{(1+f (z)\overline{f(z)})^2}, \qquad
u_{-}(x,y)=\ln \frac{ f(z)' \overline {f(z)'} }{(\Im f(z))^2}.
\label{ELpm}
\end{equation}
The two versions of the elliptic Liouville equation are Darboux integrable and are distinguished in this work by an invariant we again call the  {\intem Vessiot algebra} which is the direct analogue of the terminology in \cite{AFV}. 
The Vessiot algebras for these equations consist of the two real forms of $\mathfrak{sl}(2,\C)$, and the corresponding subgroups of $SL(2,\C)$ are used to find the solutions \eqref{ELpm} using a quotient theory of differential systems which we develop below in analogy to the hyperbolic case.  
In Example \ref{Liou12} below, the $u_{+}(x,y)$ solution is derived  using an $SU(2)$  quotient while the $u_{-}(x,y)$ solution is derived from an $SL(2,\R)$ quotient.

A striking consequence of the invariance of the Vessiot algebra is the fact that while in the hyperbolic Liouville equation $u_{xy} =2 e^{u}$ a minus sign on the right hand side may be disposed of by a trivial change of coordinates, the two versions $\Delta u = 2e^u$ and $\Delta u=-2e^{u}$ of the elliptic Liouville equation are not equivalent, since their Vessiot algebras are not isomorphic; see Corollary \ref{NonCont}.  Another application of the Vessiot algebra  can be found in \cite{FelsIveyCMC}, where we uncover the surprising fact that the PDE  for constant mean curvature (CMC) one surfaces in hyperbolic 3-space (which are Darboux integrable) and the `plus' elliptic Liouville equation are the same equation up to a change of variables. This equivalence was discovered from the fact that  these two equations have the same Vessiot algebra. A similar relation holds between the `minus' Liouville equation and CMC-1 surface equation in de Sitter space.

\smallskip

In order to generalize the notion of hyperbolic differential systems formulated in \cite{AFV},
and based on the algebraic structure of the Pfaffian systems occurring in second-order elliptic PDE in the plane (analyzed in Example \ref{introE1} below), we define an {\intem elliptic decomposable} exterior differential system on a manifold $M$ to be a system $\sysI$ equipped with a bundle $V \subset T^*M \otimes \C$ such that $V \cap \overline{V}$ spans the 1-forms of $\sysI \otimes \C$ and the 2-form generators of $\sysI \otimes \C$ may be chosen to lie in $\Lambda^2 V$ or in $\Lambda^2 \overline{V}$. (Further technical requirements are given in Definition \ref{EDdef}.). The bundle $V$ is called the singular bundle due to its relationship with singular integral elements of $\sysI\otimes \C$, as described in Corollary \ref{Dsingular} and Example \ref{introE1}.  Our definition implies that the real distribution $\dD$ annihilated by the 1-forms of $\sysI$ is even-dimensional and carries an `almost sub-complex structure', i.e., an endomorphism $\JJ$ satisfying $\JJ^2 = -\Id$ (see \S\ref{Bestone} for details). 

We then give a definition for Darboux integrability in Definition \ref{EDIdef} which is a straightforward analogue of the notion of Darboux integrability for hyperbolic systems used in \cite{AFV}.  
Essentially, Darboux integrability means that the singular bundle $V$ has sufficiently many independent first integrals,  i.e., complex-valued functions on $M$ whose differentials are sections of $V$.  
We refer to these functions as {\intem holomorphic Darboux invariants}; the justification for this term is that, when $\sysI$ is Darboux integrable, $\JJ$ restricts to be a genuine complex structure on any integral manifold whose tangent spaces are $\JJ$-invariant, 
and the Darboux invariants restrict to be $\JJ$-holomorphic functions.  
In the case of elliptic PDE in the plane, for integral manifolds of real dimension two, any two holomorphic Darboux invariants must be functionally related to each other on a solution; in fact, imposing a functional relationship between independent Darboux invariants amounts to restricting the system $\sysI$ to a submanifold of $M$ on which it becomes a Frobenius system 
and this takes us back to the original idea of Darboux integrability. Examples demonstrating this property are given in  \S \ref{ClosedForm}. 

As with the case of scalar PDE in the plane (and hyperbolic systems \cite{AFV}), we also associate to a decomposable Darboux integrable elliptic systems the singular differential system $\calV$ in Definition \ref{SSdef}.  To obtain a structure theory for Darboux integrable systems, we limit ourselves to the case where the singular system is Pfaffian.

\subsection*{Comparison and Summary of Results}

We state below a fundamental theorem of Darboux integrability, which is proved in the hyperbolic case in \cite{AFV} and in the elliptic case here, and can be described in terms of quotients of differential systems by a symmetry group $G$ and  the notion of a {\it symmetric pair} which appears in the classification of symmetric spaces \cite{Helg}. 

A group $G$ acting on $M$ is a symmetry group of a differential system $\sysI$  if $g^*\sysI=\sysI$ for all $g\in G$. We will assume that all actions are regular in the sense that the orbit space $M/G$ is a smooth manifold and $\piG:M \to M/G$ is a smooth submersion. In this case the quotient  $\sysI/G$ is the EDS on $M/G$  defined by 
\begin{equation*}
\sysI/G = \{\ \theta \in \Omega^*(M/G) \ \ | \ \piG^*\, \theta \in \sysI \ \},
\end{equation*}
while  $\pi_G^*(\sysI/G)$ are the $G$-basic forms in $\sysI$. 
The action of the symmetry group $G$ is transverse to $\sysI$ if  $I_1^\perp \cap \ker {\piG}_* = 0$, 
in which case $\sysI/G$ is a constant rank EDS. 
The system $\sysI/G$ is the group reduction of $\sysI$, while $\sysI$ is an {\defem integrable extension}  of $\sysI/G$ (see Definition \ref{DefIE}). 
The transversality condition on the action of $G$ implies that $\piG$ maps integral manifolds of $\sysI$ to integral manifolds of $\sysI/G$, 
while the integrable extension property implies that
every integral manifold of $\sysI/G$ may be lifted (locally) to an integral manifold of $\sysI$ by an application of the Frobenius theorem. 
Further useful facts about $\sysI/G$ and its computation can be found in Section 2.2 of \cite{AFB}.

 A pair of Lie groups $(K,G)$ is called a symmetric pair if there exists an involutive automorphism  $\sigma :K \to K$ such that
 the subgroup $G\subset K$ is the fixed point set of $\sigma$. 
Somewhat analogous to symmetric spaces, these play a key role in the characterization of Darboux integrable systems described in the following fundamental theorem.  However, the symmetric pairs that appear here are of a special type where either $K = G \times G$ and the automorphism fixes the diagonal subgroup, or $K$ is the complexification
of $G$ and the automorphism is complex conjugation; we refer to such symmetric pairs as being of {\em Darboux type}.

\begin{thm} \label{FTDI} A differential system $(\sysI,M)$ is a Darboux integrable decomposable (hyperbolic or elliptic) differential system where the singular systems are Pfaffian,  
if and only if there exists and EDS $\sysE$ on a manifold $N$ and a symmetric pair $(K,G)$ of Darboux type such that

\begin{enumerate}[label=\thmenumstyle,ref=\thmenumstyle]
\item $\sysE$ is a maximally Darboux integrable\footnote{This means the number of independent Darboux invariants is as large as possible; see Definition \ref{maxmindef}} Pfaffian system,
\item  $K$ acts freely on $N$, as a transverse symmetry group of $\sysE$,
\item  $\sysI$ is the quotient of $\sysE$ by the subgroup $G$, that is $\sysI =\sysE/G$ on $M=N/G$.
\end{enumerate}
\end{thm}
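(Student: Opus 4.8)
The plan is to establish both implications, following the architecture of the hyperbolic argument in \cite{AFV} while systematically replacing the two independent real singular systems by the conjugate pair $V,\overline V$ and the doubled group $G\times G$ by the complexification $K=G^{\C}$. The hyperbolic half is then essentially the theorem of \cite{AFV}, so the real content is the elliptic case, in which $K=G^{\C}$ and the involution $\sigma$ is complex conjugation with fixed-point subgroup $G$.

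For the \emph{if} direction I would argue directly from the hypotheses. Because $(K,G)$ is of Darboux type, the involution $\sigma$ relates the two singular systems of $\sysE$ — in the elliptic case it exchanges $V$ and $\overline V$ — and $G$ is exactly the $\sigma$-fixed set. The first task is to show that the quotient $\sysI=\sysE/G$ inherits the elliptic decomposable structure of Definition \ref{EDdef}: since the $G$-action commutes with $\sigma$ and preserves the singular bundle, both $V$ and the endomorphism $\JJ$ descend to $M=N/G$, while transversality of the $K$-action (hypothesis (2)) guarantees that $\sysE/G$ has constant rank with Pfaffian singular systems. The decisive point is then invariant counting: the $G$-invariant holomorphic Darboux invariants of $\sysE$ push down to holomorphic Darboux invariants of $\sysI$, and maximality (hypothesis (1)) ensures enough of them survive the quotient for $\sysI$ to satisfy Definition \ref{EDIdef}.

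The \emph{only if} direction is the substantive construction. Starting from a Darboux integrable elliptic decomposable $(\sysI,M)$ with Pfaffian singular system $\calV$, I would first extract the Vessiot algebra $\fg$ as the invariant Lie algebra measuring the failure of $\sysI$ to be maximally Darboux integrable, and then set $K=G^{\C}$ with $\sigma$ complex conjugation, producing a symmetric pair of Darboux type. The manifold $N$ is built as a principal $G$-bundle over $M$, obtained by adjoining the group parameters needed to integrate the $\fg$-action; on $N$ one defines $\sysE$ as the integrable extension (Definition \ref{DefIE}) of $\sysI$ encoded by the Maurer--Cartan form of $G$ together with the Darboux invariants, and verifies directly that $\sysE$ is maximally Darboux integrable, that $K$ acts freely and transversally, and that $\sysE/G$ recovers $(\sysI,M)$.

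The main obstacle is this reconstruction in the elliptic setting. In the hyperbolic case the two singular systems are independent real Pfaffian systems, so the Vessiot doubling $K=G\times G$ and the building of $N$ are symmetric and transparent. Here $V$ and $\overline V$ are complex conjugates rather than independent, which forces the doubling to be a complexification; one must therefore show that the reality condition relating the two conjugate families of Darboux invariants is precisely compatible with the holomorphic structure on $K=G^{\C}$ and with the descent of $\JJ$. Verifying that the genericity hypotheses of Definition \ref{EDdef} upgrade the infinitesimal $\fg$-action to a genuine free, transverse, holomorphic $K$-action on $N$ — rather than a merely local action — is where the elliptic argument diverges most sharply from \cite{AFV}, and is the step I expect to demand the most care.
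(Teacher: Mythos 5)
Your architecture matches the paper's: sufficiency via a quotient theorem (Theorem \ref{C34}), necessity via an adapted coframe, an integrable extension built from Maurer--Cartan forms, and a slice for a local $K$-action (Theorems \ref{extendexist} and \ref{loopreduction}). But there is a genuine gap at the heart of the necessity direction: you treat the Vessiot algebra as something one can simply ``extract,'' when establishing that it exists at all is the crux. The adapted coframe construction produces structure equations $d\theta^i \equiv \tfrac12 C^i_{jk}\,\theta^j\wedge\theta^k + \cdots$ in which the $C^i_{jk}$ are a priori complex-valued \emph{functions} (holomorphic Darboux invariants). To obtain a complex Lie group $K$ with a real form $G$ one must show these can be gauged to \emph{real constants}. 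The paper does this in Step 3 of Theorem \ref{fouradaptedexist} by polarization: the identity $P^i_j\,\overline{C^j_{\ell m}} = -C^i_{jk}P^j_\ell P^k_m$ is extended holomorphically off the diagonal in $\C^{2q}$, then evaluated at a fixed conjugate slot to manufacture the matrix $K^i_j$ that renders the structure functions constant, with reality following from $P(\mpt)=-\Id$. This is where real analyticity enters in an essential way (beyond its use in the complex Frobenius theorem), and your proposal never invokes analyticity, even though the elliptic half of the statement (Proposition \ref{MTE}) is asserted only for real-analytic systems. Without this normalization there is no algebra $\fg$, no complexification $K$, and hence no manifold $N$.

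Two secondary issues. First, you locate the ``main obstacle'' in upgrading the infinitesimal $\fg$-action to a global free $K$-action, but the paper sidesteps that by working locally with a slice (Lemma \ref{slicelemma}) and a completeness assumption; the delicate points are instead the reality of the $C^i_{jk}$ above and the verification that $\gam^*\theta^i$ lies in the extension $\hatsysE$ --- which is precisely where the normality hypothesis (Pfaffian singular system, via Lemma \ref{LocIV}) is consumed. Your proposal states that hypothesis but never uses it. Second, the Vessiot algebra is not merely ``the failure of $\sysI$ to be maximally Darboux integrable'': its dimension is $m-2q$, but its isomorphism class is a strictly finer invariant --- the two elliptic Liouville equations both have three-dimensional Vessiot algebras, namely $\mathfrak{sl}(2,\R)$ versus $\mathfrak{su}(2)$, and this is what distinguishes them.
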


The proof of the necessary part of the Theorem \ref{FTDI} is a local  construction of  $\sysE$ and $(K,G)$ from $\sysI$ using the Frobenius theorem for complex vector fields. 
The sufficiency part of Theorem \ref{FTDI} allows us to easily construct Darboux integrable systems by group quotients, while the
necessary part allows us to find closed form general solutions to partial differential equations represented by $\sysI$ using only the Frobenius theorem and the integral manifolds of $\sysE$. In particular the fact that $\sysI$ is the quotient of $\sysE$ by a transverse symmetry group $G$ implies that any integral manifold of $\sysI$ is the image, under the quotient map $\bpi_G:N \to N/G$, of an integral manifold of $\sysE$  (see Theorem 2.1 and Section 2.1 in \cite{AFB}). In both cases of elliptic and hyperbolic systems we can identify families of systems $\calI$ where the integral manifolds of $\sysE$ can be determined  in closed form (due to the maximal number of Darboux invariants for $\calE$), which then produces a closed form general solution to the partial differential equation represented by $\sysI$. See the examples in  \S \ref{SecondEx} and  \S \ref{MoreEx} in this article, or the many other hyperbolic examples in \cite{AA}, \cite{AFB}, \cite{AFV} demonstrating this theorem.

The difference between hyperbolic and elliptic systems in Theorem \ref{FTDI} manifests itself in the structure of $\sysE$ and the symmetric pair $(K,G)$ which we describe in the next two propositions.
 First, Theorem 1.4 in \cite{AFV} shows the following for hyperbolic systems:

\begin{prop}\label{HDIT}  A decomposable {\bf hyperbolic} differential system
is Darboux integrable  with Pfaffian singular systems if and only if
there exists  $\sysE$ and $(K,G)$ in Theorem \ref{FTDI} where
\begin{enumerate}[label=\thmenumstyle,ref=\thmenumstyle]
\item $\sysE=\sysE_1\oplus \sysE_2$ is the direct sum of two Pfaffian systems $(\sysE_1,N_1)$, $(\sysE_2,N_2)$ on $N=N_1\times N_2$,
\item $K=G\times G$ and the symmetric pair is $(K,G_{\operatorname{diag}})$, where $G_{\operatorname{diag}}$ denotes the diagonal subgroup,
\item $G$ acts freely on $N_i$ as a transverse symmetry group of  $\sysE_i$.
\end{enumerate}
\end{prop}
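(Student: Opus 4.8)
The plan is to prove the two implications directly, since this proposition is precisely the hyperbolic case of Theorem \ref{FTDI} and so cannot be deduced from it without circularity. The organizing principle is that a \emph{hyperbolic} decomposable system is one whose two singular Pfaffian systems, say $\calV_1$ and $\calV_2$, are \emph{real and independent}, as opposed to the elliptic case in which the singular bundle satisfies $V \neq \overline V$ and the two systems are a complex-conjugate pair. It is this real decoupling that I expect to produce two independent copies of the Vessiot group $G$, hence $K = G \times G$ with the diagonal as fixed subgroup, rather than the complexification of $G$ together with conjugation.

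For sufficiency I would start from the product data and form $\sysI = \sysE/G_{\operatorname{diag}}$. Because $G$ acts freely and transversely on each factor, the diagonal acts freely and transversely on $N = N_1 \times N_2$, so by the general quotient theory (\S2.2 of \cite{AFB}) the reduction $\sysI$ has constant rank. The pullbacks of $\sysE_1$ and $\sysE_2$ are two independent real Pfaffian subsystems summing to $\sysE$; they descend to two real singular systems on $M$, which is exactly the hyperbolic decomposition, while the swap $(g_1,g_2)\mapsto(g_2,g_1)$ exhibits $(G\times G, G_{\operatorname{diag}})$ as a symmetric pair of Darboux type. Finally, the first integrals of each $\sysE_i$ push down to Darboux invariants of $\sysI$, and since the two factors are completely uncoupled they contribute the maximal number of independent invariants; a dimension count then yields Darboux integrability of $\sysI$.

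Necessity is the substantive direction and where nearly all the work lies. Beginning with a hyperbolic decomposable DI system $(\sysI,M)$ with Pfaffian singular systems $\calV_1,\calV_2$, I would first collect the two families of Darboux invariants — the first integrals of $\calV_1$ and of $\calV_2$ — and form the submersions of $M$ onto their leaf spaces. The heart of the argument is the construction of the \emph{Vessiot algebra} $\fg$: a finite-dimensional algebra of vector fields that are infinitesimal symmetries of $\sysI$ preserving the decomposition and whose integral manifolds are the common level sets of all the Darboux invariants. Once $\fg$ integrates to a free action of a group $G$, the quotient map $N \to M$ realizing $M$ as a base is a principal $G$-bundle, and I would use the Frobenius theorem to integrate the two singular structures \emph{separately}: each yields a manifold $N_i$ carrying a $G$-action and a Pfaffian system $\sysE_i$ whose $G$-invariant first integrals recover the Darboux invariants of $\calV_i$. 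The two independent integrations assemble into a free $K = G \times G$ action on $N \cong N_1 \times N_2$ with $\sysE = \sysE_1 \oplus \sysE_2$, and one checks that $\sysI \cong \sysE/G_{\operatorname{diag}}$ and that $\sysE$ is maximally Darboux integrable because the construction decouples the factors by design.

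The main obstacle is the existence and dimension of the Vessiot algebra together with the passage from the infinitesimal data to a genuine \emph{product} $N_1 \times N_2$ rather than a merely twisted bundle. Concretely, one must show that each singular system, after quotienting by its first integrals, carries a transitive action of one and the same abstract group $G$, and that the two actions are independent, so that $G_{\operatorname{diag}}$ is exactly the stabilizer realizing $M$ inside $G \times G$. This independence is where hyperbolicity is indispensable: in the elliptic setting the analogous step instead identifies the second factor with the complex conjugate of the first, collapsing $G \times G$ to the complexification of $G$ and replacing the product structure by the complex structure $\JJ$. I anticipate that the cleanest route is to perform all the Frobenius integrations in adapted coordinates separating the invariants, the group parameters, and the transverse directions, which is also the local normal form underlying the elliptic half of Theorem \ref{FTDI}.
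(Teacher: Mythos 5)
The first thing to note is that the paper does not prove Proposition \ref{HDIT} at all: it is quoted verbatim as Theorem 1.4 of \cite{AFV}, and the only ``proof'' in the present paper is that citation. So the relevant comparison is between your outline and the argument of \cite{AFV}. At the level of strategy your outline tracks that argument faithfully: sufficiency is the hyperbolic quotient theorem (the analogue of Theorem \ref{C34} here, proved as Theorem 6.1 in \cite{AFB} and Corollary 3.4 in \cite{AFV}), necessity goes through the Darboux invariants, a Vessiot algebra, and a Frobenius-type integration producing $N_1\times N_2$ with a free $G\times G$ action, and your observation that hyperbolicity is what keeps the two singular systems real and independent (so that $K=G\times G$ rather than a complexification) is exactly the dichotomy the paper encodes in the notion of a symmetric pair of ``Darboux type.''

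However, taken as a proof rather than a table of contents, the proposal has a genuine gap precisely at the step you label ``the main obstacle.'' The existence of the Vessiot algebra is not something one can posit and then integrate; it has to be \emph{extracted} from the system by a sequence of coframe normalizations. In the elliptic case this is Theorem \ref{fouradaptedexist}: starting from a 1-adapted coframe one uses iterated brackets of the singular distributions, the fact that the structure coefficients are Darboux invariants, and (in Step 3) a nontrivial argument to make the coefficients $C^i_{jk}$ in $d\theta^i$ into genuine real constants satisfying the Jacobi identity. The hyperbolic analogue is Theorem 4.5 of \cite{AFV}, and it is the technical heart of the whole theory. Your outline instead asserts that ``each singular system, after quotienting by its first integrals, carries a transitive action of one and the same abstract group $G$'' --- but nothing in the proposal explains why the two singular systems produce the \emph{same} Lie algebra, why that algebra is finite-dimensional of the right dimension ($\dim M - 2q_1 - 2q_2$ in the hyperbolic count), or why the resulting local structure is an honest product $N_1\times N_2$ rather than a twisted bundle. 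These are exactly the points that consume Sections 4--5 of \cite{AFV} (and Sections 4.1--4.3 here in the elliptic case, including the construction of the forms $\omega^i$ satisfying the Maurer--Cartan equations in Proposition \ref{getomegas} and the slice argument of Lemma \ref{slicelemma}). A second, smaller imprecision: in the sufficiency direction the descended invariants make $\sysI$ Darboux integrable but not ``maximal''; maximality is a \emph{hypothesis} on $\sysE$ in Theorem \ref{FTDI}, not a conclusion about the quotient, and the singular systems of $\sysI$ are the quotients of $\{T^*N_1,\,E_2\}$ and $\{E_1,\,T^*N_2\}$, not of the $\sysE_i$ themselves.
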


By comparison, one of our main results given below states that  all  Darboux integrable elliptic decomposable systems (with Pfaffian singular systems) arise as quotients of a system $\sysE$ which is the `realification' of a holomorphic Pfaffian system $\calH$ on a complex manifold.  
(By realification we  mean simply that as a Pfaffian system $\sysE$ is generated by the real and imaginary parts of the holomorphic 1-forms generating $\calH$.)

\begin{prop} \label{MTE} A real analytic {\bf elliptic} decomposable differential system 
$\sysI$ is  Darboux integrable with Pfaffian singular system if and only if there exists  $\sysE$ and 
$(K,G)$ as in Theorem \ref{FTDI} where
\begin{enumerate}[label=\thmenumstyle,ref=\thmenumstyle]
\item $\sysE$ is the \realform \ of a holomorphic Pfaffian system $\calH$ on a complex manifold $N$, 
\item the symmetric pair is $(K,G)$ where $K$ is a complex Lie group and $G\subset K$ is a real form,
\item $K$ acts freely and holomorphically on $N$ as a transverse symmetry group of $\calH$. 
\end{enumerate}
\end{prop}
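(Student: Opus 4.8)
The plan is to prove both implications by using Theorem~\ref{FTDI} as the underlying real structure theorem and adding the holomorphic layer on top, in close analogy with the hyperbolic Proposition~\ref{HDIT}. The organizing principle is that the elliptic case is the ``complexification'' of the hyperbolic one: the product $N = N_1 \times N_2$ and the two commuting copies of $G$ are replaced by a single complex manifold $N$ carrying a holomorphic action of $K = G^\C$, with the holomorphic system $\calH$ and its conjugate $\overline{\calH}$ playing the roles of $\sysE_1$ and $\sysE_2$.

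For sufficiency I would start from the realification data (1)--(3) and verify the hypotheses of Theorem~\ref{FTDI}. Since $G \subset K$ is a real form, complex conjugation is an involutive automorphism of $K$ fixing exactly $G$, so $(K,G)$ is a symmetric pair of Darboux type. Complexifying, $\sysE \otimes \C$ splits as $\calH \oplus \overline{\calH}$; because $\calH$ is holomorphic, each exterior derivative of a holomorphic generator lies in $\Lambda^2$ of the holomorphic cotangent bundle modulo $\calH$, and likewise for the conjugates, which exhibits the elliptic decomposable structure with singular bundle $V$ descending from $\calH$ and confirms that the $2$-form generators lie in $\Lambda^2 V$ or $\Lambda^2 \overline V$. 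The holomorphic first integrals produced by applying the holomorphic Frobenius theorem to the Pfaffian singular system descend to holomorphic Darboux invariants; counting these against the ranks of $\calH$ and of the $K$-action yields maximal Darboux integrability. Transversality and freeness of the $K$-action pass from $\calH$ to its realification, so Theorem~\ref{FTDI} gives that $\sysI = \sysE/G$ is Darboux integrable and decomposable, while the induced $\JJ$ on $\dD$ descends from the complex structure on $N$ and is genuinely complex rather than a product of two real bundles, so $\sysI$ is elliptic and not hyperbolic.

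For necessity I would first invoke Theorem~\ref{FTDI} to obtain $\sysE$ on $N$ and a symmetric pair $(K,G)$ of Darboux type. The crucial step is to rule out the product type $K = G \times G$: in that case the two singular bundles would be independent real subbundles of $T^*M$, whereas for an elliptic system $V$ and $\overline V$ are genuinely interchanged by the real structure on $T^*M \otimes \C$, forcing the defining involution to act as complex conjugation, so that $K$ is the complexification of $G$ with $\fk = \fg \oplus \ri\,\fg$. I would then build an almost complex structure $J$ on $N$ by letting it agree with the lift of $\JJ$ on the lifted distribution $\dD$ and send the infinitesimal generators of the $\fg$-action to those of the $\ri\,\fg$-action, matching multiplication by $\ri$ in $K$. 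Using real-analyticity to apply the Frobenius theorem for complex vector fields, I would show $J$ is integrable, so that $N$ is a complex manifold on which $K$ acts holomorphically and freely; the $(1,0)$-parts of the generators of $\sysE$ then generate a holomorphic Pfaffian system $\calH$ whose realification recovers $\sysE$, and transversality to $\calH$ follows from transversality to $\sysE$.

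The main obstacle is the integrability of $J$ on $N$ and the consequent holomorphicity of $\calH$ in the necessity direction. The almost complex structure assembled from $\JJ$ and the imaginary generators of the $K$-action need not be integrable at the smooth level---this is precisely the regime of Nirenberg's counterexample to the complex Frobenius theorem---so the real-analyticity hypothesis is essential: it lets the holomorphic Frobenius theorem produce holomorphic first integrals, hence genuine holomorphic coordinates in which $\calH$ is manifestly generated by holomorphic $1$-forms. Establishing that one and the same $J$ is simultaneously compatible with the elliptic data descending to $M$, with the $K$-action, and with the Pfaffian singular system is the delicate compatibility the argument must secure.
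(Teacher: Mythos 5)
Your sufficiency sketch follows the paper's route in outline: one first shows (as in Proposition~\ref{HDI}) that the realification of a holomorphic Pfaffian system with $\rH^{(\infty)}=0$ is maximally Darboux integrable with singular bundle $T^*_{1,0}N\oplus\overline{\rH}$, and then (Theorem~\ref{C34}) that the quotient by the real form $G$ inherits ellipticity, decomposability and Darboux integrability, with the Darboux invariants of the quotient being the $K$-invariant holomorphic functions. You do omit the verification that the quotient's singular system is again Pfaffian (the normality condition in the statement), which the paper handles by showing $\calV=\calW/G$ and using transversality of $G$ to the derived system of $W=T^*_{1,0}N\oplus\overline{H}$; this is a required part of the ``only if'' data and should not be skipped.

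The necessity direction has a genuine gap. You propose to ``first invoke Theorem~\ref{FTDI} to obtain $\sysE$ on $N$ and a symmetric pair $(K,G)$,'' but Theorem~\ref{FTDI} in the elliptic case \emph{is} Proposition~\ref{MTE}: there is no independently established elliptic version of that theorem to call on, so your argument is circular. The entire difficulty of the converse is producing the group $K$ and the extension $\sysE$ from the intrinsic data of $\sysI$. In the paper this is done by refining the 1-adapted coframe to a Vessiot coframe (Theorem~\ref{fouradaptedexist}), whose Step~3 uses polarization/analytic continuation of the matrix $P$ to normalize the coefficients $C^i_{jk}$ of $\theta^j\wedge\theta^k$ into \emph{real constants} --- the structure constants of $\fg$ --- and this is where real-analyticity is used beyond the complex Frobenius theorem. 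Your proposal contains no mechanism for identifying $\fg$ or $K$ at all; the dichotomy you focus on (ruling out $K=G\times G$) is not where the work lies. Moreover, the integrability of $J$ on $N$ that you flag as the main obstacle is sidestepped in the paper's construction: $N$ is built as $\Slice\times K$, where $\Slice$ is an integral manifold of the Frobenius system $\{\omega^i\}$ carrying an integrable complex structure because $\Vinf$ restricted to $\Slice$ is a holomorphic coframe, and $K$ is a complex Lie group; the extension $\hatH=\spanc\{\mu^j_R-\gam^*\psi^j,\ \gam^*\eta^r\}$ is then manifestly holomorphic because the $\psi^j=S^j_a\pi^a$ have Darboux-invariant coefficients. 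Assembling an almost complex structure on a pre-existing $N$ and then proving its integrability, as you suggest, would be substantially harder and is not needed.
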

In the forward direction, the construction of $\sysE$ uses the Frobenius theorem for complex vector fields 
(stated below in Theorem \ref{CFrob})
and is only local.  
This application of the Frobenius theorem requires that $\sysI$ be real analytic; 
in the other direction, $\sysI$ is necessarily real analytic since $\calH$ and the action of $K$ are holomorphic.
The Lie algebra of $G$ which appears in Theorem \ref{HDIT} and Proposition\ref{MTE} is shown in \S\ref{TVA} to be an invariant of the system, 
and we call this the Vessiot algebra because its role is analogous to that for hyperbolic systems. 

In the hyperbolic case \cite{AFV}  the quotient map $\bpi_G$ is called the {\intem superposition map} as it combines integral manifolds of the two auxiliary systems $\sysE_1$ and $\sysE_2$ to produce integral manifolds of $\sysI$. In the elliptic case the map $\bpi_G$ is more analogous to 
the classical Weierstrass representation for minimal surfaces in $\R^3$, as it allows solutions to $\sysI$ to be expressed in terms of holomorphic data,
namely integral manifolds of the auxiliary holomorphic system $\calH$.\footnote{In fact, as pointed out by Ian Anderson, the minimal surface equation itself can be 
represented by an elliptic system which is Darboux integrable, and the Weierstrass representation can be derived from the quotient map $\bpi_G$ for that system.} 

In this paper, we split the proof of Proposition \ref{MTE} into the sufficiency and necessary parts.  
We start with a maximally Darboux integrable system and a symmetric pair, 
and prove in Theorem \ref{C34} that the quotient is a Darboux integrable elliptic decomposable  system. This provides a mechanism for constructing such DI systems which we demonstrate with examples in \S \ref{SecondEx}. Then in \S \ref{coframes} we prove the necessary part by starting with 
a DI elliptic system $\sysI$ with Pfaffian singular system, and construct $\sysE$ using the Frobenius theorem in Theorems \ref{extendexist} and \ref{loopreduction}. 
One of the key steps in establishing Theorem \ref{loopreduction} is associating to a DI elliptic system $\sysI$ a real Lie algebra $\fg$, the Lie algebra of $G$ used in constructing the extension $\sysE$.  
In \S \ref{TVA} we show that, up to Lie algebra isomorphism, the Lie algebra $\fg$ is an invariant of $\sysI$. 
This requires an alternative approach compared to the case of hyperbolic systems due to the need to track the real isomorphism class of the algebra.

The proofs of Theorem \ref{extendexist} and Theorem \ref{loopreduction} 
have a number of similarities to that of Theorem 5.1 in \cite{AFV} but also some critical differences.  
One main difference is that we prove the existence of the extension $\sysE$ more directly than was done in \cite{AFV}; 
in particular, compare Theorem \ref{extendexist} here  with  Theorem 5.1 in \cite{AFV} which utilizes the manifolds $M_1$ and $M_2$ which we have not.  
The proofs of Theorem \ref{extendexist} and Theorem \ref{loopreduction}  would provide an alternative approach  to  proving Theorem 5.1 in \cite{AFV}. 
A second important difference is the role in which real analyticity plays in the construction of $\sysE$, 
both through the application of the Frobenius theorem as well as Step 3 of Theorem \ref{fouradaptedexist}.

\medskip

While we have generalized the construction in \cite{AFV} to elliptic systems, we have not pursued the resulting applications, such
as to the existence of B\"acklund transformations as in \cite{AFB}, \cite{AFBA}, \cite{ClellandIvey}. Of particular interest, it was shown in \cite{AFCP} that the initial value problem for Darboux integrable hyperbolic PDE in the plane with solvable Vessiot algebra 
admits a closed-form solution expressed in terms of quadratures.  
This  not only generalizes d'Alembert's solution to the wave equation to other equations, but it also generalizes Lie's approach to closed-form solutions to ordinary differential equations in that it determines a class of equations where the initial value problem can be solved by quadrature (i.e., the hyperbolic Darboux integrable systems with solvable Vessiot algebras). 
It is an interesting problem to extend these results to boundary value problems for DI elliptic equations. 

We would like to thank our good friend and colleague Ian Anderson for many useful discussions and suggestions that helped initiate this project.
We are also grateful to Anderson and his co-author Brandon Ashley for providing us a copy of their preprint \cite{AA} which supplied some motivation and perspective for this article. We are  indebted to  Andreas Malmendier for his help that allowed us to finish the proof of Theorem \ref{fouradaptedexist}.
The second author is grateful to Robert Bryant for introducing him to the subject of Darboux integrability many years ago, and would also like to thank the Department of Mathematics and Statistics at Utah State University for their hospitality and support during the winter semester of 2023.

\subsection*{Layout of the Paper}

In \S\ref{Bsect} we define elliptic decomposable systems and Darboux integrability, beginning with the dual notion of elliptic distributions.   We also discuss elliptic systems generated by the spans of the real and imaginary part of a holomorphic Pfaffian system $\sysH$ on complex manifolds (the \realform \ of $\sysH$ described above), and show that they are maximally Darboux integrable. In the examples, we discuss systems arising from first-order and second-order PDE in the plane, where the elliptic structure is revealed through considerations of singular integral elements.

In \S\ref{reduction} we consider actions of a complex Lie group $K$ on a complex manifold that preserve a  holomorphic Pfaffian system.  We show, in Theorem \ref{C34}, that under the assumptions of regularity and transversality, the quotient of the realification of $\sysH$ by the action of the real form of $K$ is elliptic and Darboux integrable.   This provides the sufficiency proof of Theorem \S\ref{MTE}. We also show how to construct a local coframe for the quotient system  which leads to a normal form for the structure equations.  Examples are given at the end of the section demonstrating the theory.

In \S\ref{coframes} we begin with showing that every DI elliptic decomposable system $\sysI$ admits a local coframe of precisely the same normal form as given in \S\ref{reduction}. The construction of this coframe is, up to a point, analogous to what happens in \cite{AFV}, but here we also use analytic continuation in an essential way.  In \S\ref{CONSIE} and \S\ref{ConvTheorem} we finish the proof of the local necessary part of Theorem \ref{MTE} by constructing the integrable extension $\sysE$  on an appropriate complex manifold along  with and a group action $K$ so that $\sysI =\sysE/G$, with $G$ a real form of $K$. Again, examples are given demonstrating this construction.

In remarks at the end of this article, we briefly discuss the classic theory of Darboux integrability and its relationship to closed form solutions. 

\subsection*{EDS Conventions}
We will study PDE and PDE systems by working with their generalization as exterior differential systems. 
Recall that an {\defem exterior differential system} (EDS) on a smooth manifold $M$ is a graded ideal $\sysI$ 
(with respect to wedge product) inside the ring of differential forms on $M$, 
such that $\sysI$ is also closed under exterior differentiation.  
We assume systems do not include 0-forms, and we assume that for $1\le k \le \dim M$ 
the $k$-forms of $\sysI$ span a vector sub-bundle $I_k$ of constant rank inside $\Lambda^k T^*M$.  
The systems we consider will, in fact, be generated algebraically by 1-forms and 2-forms, i.e., sections of $I_1$ and $I_2$.
Most of these will be {\defem Pfaffian systems}, which are generated by sections of a given sub-bundle $I_1 \subset T^*M$ and their exterior derivatives.  

For a Pfaffian EDS $\sysI$ or $\sysE$
we will by convention use the corresponding roman letter $I$ or $E$ respectively to denote the bundle of 1-forms which generates it in this way, omitting the subscript $1$ when no confusion is possible; sometimes $I$ or $E$ will itself be referred to as a Pfaffian system. 
We also let  $I^\perp \subset TM$ denote the annihilator sub-bundle of $I \subset T^*M$; 
similarly, given a distribution $\DD\subset TM$ then $\DD^\perp \subset T^*M$ denotes its annihilator.  

For a Pfaffian system generated by $I \subset T^*M$, the {\defem first derived system} $I'$ is spanned by sections $\theta$ of $I$ which satisfy $d\theta \in C^\infty( I \wedge T^*M)$.  Applying this construction repeatedly
generates the {\defem derived flag} $I \supset I^{(1)} \supset \ldots$ where $I^{(1)}=I'$.  This sequence (the members of which we will always assume to be constant rank sub-bundles of $T^*M$) stabilizes at a 
terminal derived system $I^{(\infty)}$ which may have rank zero.  Any local section of $I$ that is a closed 1-form lies in $I^{(\infty)}$, and by the Frobenius Theorem $I^{(\infty)}$ is locally generated by exact 1-forms. 
We apply these standard constructions to the complexification $\sysI \otimes \C$ and utilize the  version of the Frobenius theorem for complex vector fields found in \cite{StormarkFrob} and stated in Theorem \ref{CFrob} below.  
This version of the  Frobenius theorem requires real analyticity and therefore
we will assume all manifolds and differential systems are {\it real analytic}.

An immersion $s:\Sigma \to M$ is said to define an {\defem integral submanifold} of $\sysI$ if $s^* \varphi = 0$ for all differential forms $\varphi \in \sysI$.  
When $\sysI$ is coupled with a differential $n$-form $\Omega$ giving an {\defem independence condition}, 
it is customary to consider only integral submanifolds which 
satisfy the independence condition, in the sense that $s^* \Omega$ is a volume form on $\Sigma$. 

As is well-known (see, e.g., \cite{BCG3} or \cite{CfB2}) a system $\mathcal R$ of $p$ partial differential equations of order $k$ for $s$ functions of $n$ variables defines a codimension-$p$ submanifold $M$ inside the space $J^k(\R^n, \R^s)$ of $k$-jets of 
functions from $\R^n$ to $\R^s$, and solutions to system $\mathcal R$ are in one-to-one correspondence with
integral submanifolds of a Pfaffian system $\sysI$ on $M$ generated by the pullbacks to $M$ of the canonical contact forms
on the jet space.  (One usually also requires the integral submanifolds to satisfy an independence condition
that implies they are graphs over $\R^n$.)  Examples \ref{ex:Laplace}, \ref{introE1}, \ref{benexample1} and \ref{BHE} below
show how the system $\sysI$ is constructed in several cases.

\section{Preliminaries}\label{Bsect}

The definition of decomposable elliptic systems, which are the differential systems we consider in this paper,  
is analogous to that of a decomposable hyperbolic differential systems formulated in \cite{AFV}. 
This definition is also based on the algebraic structure of the Pfaffian systems occurring in second-order elliptic PDE in the plane, discussed in Example \ref{introE1} of \S \ref{basicexamplesec}.
In this section we will define an elliptic decomposable exterior differential system on a manifold $M$ to be 
a system $\sysI$ equipped with a bundle $V \subset T^*M \otimes \C$ such that $V \cap \overline{V}$ spans the 1-forms of $\sysI \otimes \C$ 
and the 2-form generators of $\sysI \otimes \C$ may be chosen to lie in $\Lambda^2 V$ and in $\Lambda^2 \overline{V}$; 
further technical requirements are given in Definition \ref{EDdef}. 
The bundle $V$ is called the singular bundle due to its relationship with singular integral elements of $\sysI\otimes \C$%
\ifarch, as described Corollary \ref{Dsingular} and Example \ref{introE1}. \else; see Remark \ref{TS10}. \fi  
Our definition implies that the real distribution $\dD$ annihilated by the 1-forms of $\sysI$ is even-dimensional and carries an `almost sub-complex structure', i.e., an endomorphism $\JJ$ satisfying $\JJ^2 = -\Id$ (see \S\ref{Bestone} for details). 

The definition of Darboux integrability given in \ref{EDIdef} is a straightforward analogue of the notion of Darboux integrability for hyperbolic systems used in \cite{AFV}.  
Essentially, Darboux integrability means that the singular bundle $V$ has sufficiently many independent first integrals,  i.e., complex-valued functions on $M$ whose differentials are sections of $V$.  
We refer to these functions as holomorphic Darboux invariants; the justification for this term is that, when $\sysI$ is Darboux-integrable $\JJ$ restricts to be a genuine complex structure on any integral manifold whose tangent spaces are $\JJ$-invariant, and the Darboux invariants restrict to be $\JJ$-holomorphic.  
Again, as in the hyperbolic case \cite{AFV}, we  associate to a decomposable Darboux-integrable elliptic system the singular differential system $\calV$ in Definition \ref{SSdef}. 
We limit ourselves  to DI systems $\sysI$ where this singular system is Pfaffian, as was done in \cite{AFV}, and which is always satisfied when $\sysI$ is itself a Pfaffian system. 
The case where the number of independent Darboux invariants is maximal plays a key role.  
In \S \ref{SmaxDI} a maximal DI system is shown to be equivalent to a holomorphic system on a complex manifold, 
and the invariants of such systems used in the proof of Theorem \ref{MTE} are then determined.
Examples which demonstrate the theory are presented in \S\ref{basicexamplesec}.

\subsection{Elliptic Distributions and Elliptic Decomposable Systems}\label{Bestone}
We begin by giving the elliptic analogue of the definition of a hyperbolic distribution from \cite{AA}. 

\begin{defn}\label{ED} An {\em elliptic distribution} on a manifold $M$ of dimension $\dimM$ is a pair $(\dD, \JJ)$ where 
\begin{enumerate}[label=\defenumstyle,ref=\defenumstyle]
\item\label{Ddefeven} $\dD\subseteq TM$ is a smooth distribution of even rank $\rankD <\dimM$, 
\item\label{Ddefchow} $\dD$ is bracket-generating, i.e., $\dD^{(\infty)}= TM$,
\item\label{DdefJ} $\JJ$ is a sub-complex structure on $\dD$ (i.e., $\JJ: \dD \to \dD$ is a bundle isomorphism satisfying $\JJ^2 = -\Id$) ,
\item\label{Ddefeigens} when $\JJ$ is extended $\C$-linearly to $\dD \otimes \C$, the terms of the decomposition 
\[
\dD \otimes \C = \Dplus\oplus \Dminus
\]
as a sum of $+\ri$ and $-\ri$ eigenspaces of $\JJ$ respectively, satisfy
\begin{equation}
[\Dplus, \Dminus] \subseteq \dD\otimes \C.
\label{DDB}
\end{equation}

\end{enumerate}
\end{defn}
\def\JJSig{\JJ_{\scriptscriptstyle\Sigma}}

The non-degeneracy of $\JJ$ on the elliptic distribution $\DD$ implies that 
if $s:\Sigma \to M$ is an integral manifold of $\DD$ which is $\JJ$ invariant, so that  $\JJ$ preserves $s_* T_x \Sigma$, 
then  $\JJ$ restricts to give an almost complex structure  ${\JJSig}$ on $\Sigma$.


\begin{remark}\label{JfromDplus}
The definition of $\Dplus, \Dminus$ as eigenspaces of $\JJ$
implies that $\Dminus = \overline{\Dplus}$.  Conversely, a decomposition of
 $\dD \otimes \C$ as an (ordered) direct sum $\Dplus \oplus \Dminus$ of complex subspaces which are conjugates of each other uniquely determines a sub-complex structure on $\dD$ with these eigenspaces
\ifjour%
(see, e.g., Prop. 1.5 in Chapter IX of \cite{KN}). \else.  
To see why, suppose $\vW_a$ is a local  basis of sections of $\Dplus$, and define a complex endomorphism on $\dD \otimes \C$ by $\JJ \vW_a = \ri \vW_a$ and
$\JJ \overline{\vW_a}  = -\ri \overline{\vW_a}$.  Then $\JJ^2 = -\Id$, and $\JJ$ restricts to be an endomorphism of $\dD$, since
\begin{equation}
\JJ( \vW_a+\overline{\vW_a}) = \ri (\vW_a -\overline{\vW_a}) 
\qquad \JJ\left(\ri( \vW_a-\overline{\vW_a})\right)  = -(\vW_a + \overline{\vW_a}).
\label{IJmap}
\end{equation}
(In this section we use index ranges $1 \le a,b \le \rankDplus$ and $1 \le i,j \le \dimM - \rankD$.)
Moreover, this $\JJ$ is independent of the choice of basis.
\fi
\end{remark}

It is easy to see that condition \eqref{DDB} implies the following:
\begin{lemma}\label{CDER} For an elliptic distribution $\dD$, the derived systems of $\dD, \Dplus, \Dminus$ satisfy
$$
\dD^{(k)} \otimes \C=(\dD \otimes \C)^{(k)} = \Dplus^{(k)}+ \Dminus^{(k)}
$$
\end{lemma}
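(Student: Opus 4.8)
The plan is to prove the two equalities separately, with almost all the work going into the second. The first, $\dD^{(k)}\otimes\C=(\dDC)^{(k)}$, I would dispose of quickly: since the Lie bracket extends $\C$-bilinearly from real to complex vector fields, the pointwise span of brackets of sections of $\dDC$ is exactly $[\dD,\dD]\otimes\C$, so $(\dDC)^{(1)}=(\dD+[\dD,\dD])\otimes\C=\dD^{(1)}\otimes\C$, and iterating gives the claim. This is routine and I would state it in a line or two.

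The substance is the identity $(\dDC)^{(k)}=\Dplus^{(k)}+\Dminus^{(k)}$, which I would prove by induction on $k$, the base case $k=0$ being the defining decomposition $\dDC=\Dplus\oplus\Dminus$. For the inductive step, expanding the bracket and using bilinearity gives
\[
(\dDC)^{(k+1)}=\Dplus^{(k)}+\Dminus^{(k)}+[\Dplus^{(k)},\Dplus^{(k)}]+[\Dminus^{(k)},\Dminus^{(k)}]+[\Dplus^{(k)},\Dminus^{(k)}],
\]
where the first, second and fourth groups already assemble into $\Dplus^{(k+1)}+\Dminus^{(k+1)}$. So the whole step reduces to showing that the \emph{mixed} term $[\Dplus^{(k)},\Dminus^{(k)}]$ contributes nothing new, i.e.\ that it is absorbed into $\Dplus^{(k)}+\Dminus^{(k)}$.

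This absorption is exactly where the ellipticity hypothesis \eqref{DDB} is needed, and it is the main obstacle. I would isolate it as a separate claim, proved by induction on $i+j$:
\[
[\Dplus^{(i)},\Dminus^{(j)}]\subseteq \Dplus^{(i)}+\Dminus^{(j)}\qquad(i,j\ge 0),
\]
whose base case $(i,j)=(0,0)$ is precisely \eqref{DDB}. The inductive step reduces $(i,j)$ to $(i,j-1)$ when $j\ge1$ (writing $\Dminus^{(j)}=\Dminus^{(j-1)}+[\Dminus^{(j-1)},\Dminus^{(j-1)}]$) and to $(i-1,0)$ when $j=0$, in each case invoking the Jacobi identity to rewrite a triple bracket $[X,[Y,Z]]=[[X,Y],Z]+[Y,[X,Z]]$ and applying the inductive hypothesis to the inner mixed brackets. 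The bookkeeping point that makes this close—and the delicate thing to state cleanly—is that bracketing a $\Dplus^{(i)}$-element against a pure $\Dplus$-vector stays on the plus side, landing in $\Dplus^{(i+1)}$, while every genuinely mixed bracket is pushed back \emph{down} into $\Dplus^{(i)}+\Dminus^{(j)}$ by \eqref{DDB}; no mixed bracket escapes to a strictly higher derived level. Granting the claim at $i=j=k$ gives $[\Dplus^{(k)},\Dminus^{(k)}]\subseteq \Dplus^{(k)}+\Dminus^{(k)}=(\dDC)^{(k)}$, which is contained in $\Dplus^{(k+1)}+\Dminus^{(k+1)}$, completing the induction.

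Two small checks I would make explicit: that conjugation carries $\Dplus^{(i)}$ to $\Dminus^{(i)}$, so the two families of base cases are conjugate and half the verifications come for free; and that each $(\dDC)^{(k)}$ is conjugation-invariant, consistent with its being the complexification of the real distribution $\dD^{(k)}$. Apart from the mixed-bracket claim, every step is formal once \eqref{DDB} has done its work.
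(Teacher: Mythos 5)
Your proof is correct. The paper in fact offers no argument at all for this lemma --- it is introduced with ``It is easy to see that the condition \eqref{DDB} implies the following'' --- so there is no official proof to diverge from; your write-up is a legitimate filling-in of that gap. The one genuinely non-formal point is exactly the one you isolate: the strengthened mixed-bracket claim $[\Dplus^{(i)},\Dminus^{(j)}]\subseteq \Dplus^{(i)}+\Dminus^{(j)}$, proved by induction on $i+j$ via the Jacobi identity, is the right generalization of \eqref{DDB} needed to absorb the cross terms at every level of the derived flag, and your case analysis ($j\ge 1$ reducing to $(i,j-1)$, $j=0$ reducing to $(i-1,0)$) checks out. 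One small point worth making explicit if you write this up: the sum $\Dplus^{(k)}+\Dminus^{(k)}$ need not be direct for $k\ge 1$, but since the paper assumes all members of the derived flags are constant-rank sub-bundles, every local section of the sum still splits as a sum of sections of the two summands, which is all your bilinear expansion of the bracket requires.
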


Many of our later results will be derived using local coframes adapted to elliptic distributions.  
The following lemma gives a first version of the structure equations satisfied by such coframes.

\newcommand\npi{{\sigma}}
\newcommand\npibar{\overline{\sigma}}
\newcommand{\npibars}[1]{\overline{\sigma^{#1}}}
\begin{lemma}\label{BasicCoframe}
Let $(\dD,\JJ)$ be an elliptic distribution on $M$.  Near any point of $M$ there is a coframe $\{ \theta^i, \npi^a, \npibars{a} \}$ consisting of complex-valued 1-forms such that 
$I=\dD^\perp$ is spanned by the $\theta^i$, $\Dplus^\perp$ is spanned by the $\theta^i$ and $\npibars{a}$, and the $\theta^i$ satisfy structure equations 
\begin{equation}
d\theta^i \equiv \tfrac12 A^i_{ab} \npi^a \wedge \npi^b + \tfrac12 B^i_{ab} \npibars{a} \wedge \npibars{b} \qquad \mod \ \theta^1\ldots \theta^{\dimM - \rankD}.
\label{PED}
\end{equation}
In addition, on the domain of this coframe there is an invertible matrix $P$, whose entries are complex-valued functions, such that $\theta^i = P^i_j \overline {\theta^j}$.  It follows that
$P^{-1}=\overline{P}$ and 
\begin{equation}\label{PED2}
B^i_{ab}=P^i_j \overline {A^j_{ab}}.
\end{equation}
\end{lemma}
\begin{proof}Choose a local frame  $\{\vT_i, \vW_a, \overline{\vW_a} \}$  for $TM\otimes \C$ such that  $ \Dplus=\{ \vW_a \} $ and $\Dminus=\{ \overline{\vW_a} \}$, and let $\{ \theta^i, \npi^a, \npibar^{a} \}$ be the  dual local coframe for $T^*M\otimes \C$.  
(Thus, the $\theta^i$ form a local basis of sections of $I \otimes \C$.)
Let $\vX$ and $\vY$ be arbitrary local sections of $\Dplus$ and $\Dminus$ respectively.  Then  \eqref{DDB} 
implies that
$$
d \theta^i( \vX, \vY) = -\theta^i([\vX,\vY]) = 0.
$$
It follows that the expression for $d \theta^i$ in terms of our coframe contains no $\npi^a \wedge \npibars{b}$ terms, so has the form \eqref{PED}.

Since $I\otimes \C$ is closed under conjugation, the $\thetabars{i}$ also form a local basis of sections of $I \otimes \C$.  Thus, 
$\theta^i = P^i_j \overline{ \theta^j }$ where $P^i_j$ are the components of an invertible matrix-valued function. 
Substituting this equation into its complex conjugate then gives $P^{-1}=\overline{P}$.
Taking the exterior derivative of $\theta^i = P^i_j \overline{ \theta^j }$, using the conjugate of equation \eqref{PED}, and comparing the left-hand side with that of \eqref{PED} gives $B^i_{ab} = P^i_j \overline{ A^j_{ab}}$ .
\end{proof}


\ifarch
Condition \eqref{DDB} implies that the subspaces $\Dplus$ and $\Dminus$ 
are related to the singular integral elements of the Pfaffian system generated by the 1-forms annihilating $\dD$ as follows.

\begin{cor}\label{Dsingular} Let $(\dD,\JJ)$ and $\sysI$ be an elliptic distribution on $M$ and $\sysI$ be the Pfaffian system generated by $I=\dD^\perp$. Then any one-dimensional complex subspace of $\Dplus$ or of $\Dminus$ is a singular integral element of $\sysI \otimes \C$.
\end{cor}
\begin{proof} Let $\{ \theta^i, \npi^a, \npibar^{a} \}$ be the local coframe given by Lemma \ref{BasicCoframe}.

Since $\dD$ is bracket-generating, there exist local sections $\vX_1, \vX_2$ of $\Dplus$ with $[\vX_1,\vX_2]\not \in \dD \otimes \C$.  
Thus there exists an index $i_0$ such that $d\theta^{i_0}(\vX_1,\vX_2) = - \theta^{i_0}([\vX_1,\vX_2])\neq 0$, and therefore the 2-form $A^{i_0}_{ab} \npi^a \wedge \npi^b \neq 0$.  
Similarly, since $[\overline{\vX_1},\overline{\vX_2}] \notin \dD\otimes \C$ then $B^{i_0}_{ab} \npibars{a} \wedge \npibars{b}\neq 0$.

Singular integral elements are those integral elements for which the polar equations associated to the integral element have lower rank when compared to those associated to nearby integral elements. 
(The `polar equations' refer to a basis for the annihilator of the polar space of the integral element; see Chapter 8 in \cite{CfB2} for further details.)
If $E$ is the one-dimensional integral element for $\sysI \otimes \C$ spanned by $X^+= \xi^a \vW_a\restr_\mpt \in \Dplus\restr_\mpt$, 
then the polar space associated to $E$ is annihilated by $\theta^i$ and by 
$$X^+ \hook d\theta^i = A^i_{ab}  \xi^a  \npi^b$$
for each $i$.  
Because  $B^{i_0}_{ab} \npibars{a} \wedge \npibars{b}\neq 0$ these equations are not maximal rank. In particular, an integral element spanned by a generic $X \in \dDC$ has polar equations
which include nonzero terms involving the $\npibars{b}$. Therefore, $X^+\in \Dplus$ spans a singular integral element, and similarly any nonzero $X^-\in \Dminus$ spans a singular integral element.
\end{proof}
\fi

\ifarch
The following converse to Corollary~\ref{Dsingular} 
characterizes ellipticity in terms of Pfaffian systems and their singular integral elements. (See Lemma 2.1 and Theorem 2.3 in~\cite{AFV} for a similar characterization for hyperbolic systems.) 

\begin{prop}  \label{PS1}
Let $\sysI$ be a Pfaffian system generated by $I\subset T^*M$ and let $\dD =I^\perp$.  Suppose that 
\begin{enumerate}[label=\thmenumstyle,ref=\thmenumstyle]
\item $I^{(\infty)}=0$;
\item\label{PS1decomp}
there are complex sub-bundles $\Dplus, \Dminus \subset TM \otimes \C$ such that $\Dminus = \overline{\Dplus}$ and  $\dD \otimes \C = \Dplus\oplus \Dminus$;

\item\label{PS1integral} for any $X^+\in \Dplus$ and $X^- \in \Dminus$,  
$\spanc \{\, X^+,\, X^-\, \}$ is an integral 2-plane of $\calI\otimes \C$.
\end{enumerate}
Then $(\dD,\JJ)$ is an elliptic distribution on $M$, where $\JJ :\dD \to \dD$ is determined by the 
decomposition $\dD \otimes \C = \Dplus\oplus \Dminus$ as in Remark \ref{JfromDplus}.
\end{prop}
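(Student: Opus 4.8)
The plan is to verify, one at a time, the four defining properties of an elliptic distribution in Definition~\ref{ED}, and two of them come essentially for free. Property~\ref{DdefJ} is immediate: hypothesis~\ref{PS1decomp} supplies exactly the conjugate-pair decomposition $\dD \otimes \C = \Dplus \oplus \Dminus$ with $\Dminus = \overline{\Dplus}$ required in Remark~\ref{JfromDplus}, so that remark produces a well-defined, basis-independent sub-complex structure $\JJ$ on $\dD$ with $\JJ^2 = -\Id$. The even-rank half of~\ref{Ddefeven} is also immediate: conjugation is a real-linear isomorphism carrying $\Dplus$ onto $\Dminus$, so $\rank_\C \Dplus = \rank_\C \Dminus$, and hence $\rank \dD = \rank_\C(\dD \otimes \C) = 2\rank_\C \Dplus$ is even; the strict inequality $\rankD < \dimM$ holds because $I \neq 0$.

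The substantive content is the bracket condition~\ref{Ddefeigens}, which I would extract directly from the integral-element hypothesis~\ref{PS1integral}. Let $X^+$ and $X^-$ be local sections of $\Dplus$ and $\Dminus$. Since both lie in $\dD \otimes \C = (I \otimes \C)^\perp$, every $\theta \in I \otimes \C$ satisfies $\theta(X^+) \equiv \theta(X^-) \equiv 0$ as functions, so Cartan's formula
\[
d\theta(X^+, X^-) = X^+\bigl(\theta(X^-)\bigr) - X^-\bigl(\theta(X^+)\bigr) - \theta([X^+, X^-])
\]
collapses to $d\theta(X^+, X^-) = -\theta([X^+, X^-])$. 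Taking $X^+, X^-$ nonzero (so that they span a genuine $2$-plane, using $\Dplus \cap \Dminus = 0$), hypothesis~\ref{PS1integral} says $\spanc\{X^+, X^-\}$ is an integral element of $\calI \otimes \C$, whence $d\theta$ annihilates it for every generator $\theta$; thus $\theta([X^+, X^-]) = 0$ for all $\theta \in I \otimes \C$, i.e.\ $[X^+, X^-] \in \dD \otimes \C$. By bilinearity of the bracket this holds for all sections, yielding $[\Dplus, \Dminus] \subseteq \dD \otimes \C$, which is~\eqref{DDB}.

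It remains to show $\dD$ is bracket-generating,~\ref{Ddefchow}, and here the hypothesis $I^{(\infty)} = 0$ enters through the standard duality between the derived flag of the Pfaffian system $I$ and the bracket-flag of its annihilator $\dD = I^\perp$. Because $\theta \in I^{(1)}$ iff $d\theta \equiv 0 \bmod I$ iff $\theta$ annihilates $[\dD,\dD]$, one has $I^{(1)} = (\dD + [\dD,\dD])^\perp$, and iterating gives $I^{(k)} = (\dD^{(k)})^\perp$ at every stage, so $I^{(\infty)} = (\dD^{(\infty)})^\perp$. Consequently $I^{(\infty)} = 0$ is equivalent to $\dD^{(\infty)} = TM$, which is~\ref{Ddefchow}. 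The step demanding the most care is precisely this iteration: one must invoke the standing constant-rank assumptions on the two derived flags to guarantee that the annihilator correspondence remains valid at each level, so that the terminal systems genuinely match. With all four conditions of Definition~\ref{ED} established, $(\dD, \JJ)$ is an elliptic distribution, and $\JJ$ is the structure determined by the decomposition as in Remark~\ref{JfromDplus}.
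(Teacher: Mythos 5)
Your proof is correct and follows essentially the same route as the paper's: even rank from the conjugate-pair decomposition, bracket-generation from the duality $I^{(\infty)}=(\dD^{(\infty)})^\perp$, the bracket condition \eqref{DDB} from Cartan's formula applied to the integral $2$-planes, and $\JJ$ from Remark~\ref{JfromDplus}. You simply spell out a few details (the derived-flag duality, the nonzero-rank remark) that the paper leaves implicit.
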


%



\begin{proof} 
The assumption $I^{(\infty)}=0$ is equivalent to $\dD$ being bracket-generating, thus satisfying
condition \ref{Ddefchow} in Definition \ref{ED}.
Assumption \ref{PS1decomp} implies that $\rk_\R \dD = 2 \rk_\C \Dplus$, so condition \ref{Ddefeven} is satisfied. 
Assumption \ref{PS1integral} implies that for any 1-form $\theta$ in $\sysI$ and any 
sections $\vX^+, \vX^-$ of $\Dplus$ and $\Dminus$ respectively,
$$
\theta([\vX^+,\vX^-]) = -d\theta(\vX^+,\vX^-)  = 0.$$
Therefore $[\vX^+,\vX^-] \in  \dD\otimes \C$ and equation \eqref{DDB} in condition \ref{Ddefeigens} is satisfied.  Lastly, it is easy to check that the $\JJ: \dD\to \dD$  defined by equation \eqref{IJmap} is a sub-complex structure, so all conditions in Definition \ref{ED} are satisfied. 
\end{proof}

\begin{remark} \label{PfaffED} In order to determine if a Pfaffian system $\sysI$ satisfies the conditions in Proposition \ref{PS1}, it is necessary to identify the 1-dimensional complex integral elements that are singular.  For, Corollary \ref{Dsingular} shows that every non-zero vector in $\Dplus$ or $\Dminus$ is in fact a singular integral element.   Determining appropriate candidate spaces $\Dplus,\Dminus$ is illustrated in Example \ref{benexample1} of  \S\ref{basicexamplesec} below.
\end{remark}

By reformulating the hypotheses of Proposition \ref{PS1} in terms of a local coframe, we obtain the following.

\begin{cor}\label{CF1}  Let $\calI$ be a Pfaffian system such that $I^{(\infty)}=0$.
Suppose that on $\scrU \subset M$ there exists a coframe $\{\, \theta^i, \, \npi^a, \npibars{a} \, \}$ for   $TM\otimes \C$ such that $I \otimes \C = \spanc \{ \theta^i\}$ 
and the $\theta^i$ satisfy \eqref{PED}.  
Let $\dD =I^\perp$, $\Dplus =\{\ \theta^i\ , \npibars{b}\ \}^\perp$, $\Dminus =\{\ \theta^i\ , \npi^a \ \}^\perp$, and 
define a sub-complex structure $\JJ$ on $\DD$ associated to the splitting $\dD \otimes \C = \Dplus \oplus \Dminus$ as in Remark \ref{JfromDplus}.  
Then $(\DD, \JJ)$ is an elliptic distribution on $\scrU$.
\end{cor}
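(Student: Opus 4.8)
The plan is to deduce the statement directly from Proposition \ref{PS1}, since the sub-complex structure $\JJ$ constructed here from the splitting $\dDC = \Dplus \oplus \Dminus$ is precisely the one appearing in the conclusion of that proposition. It therefore suffices to verify the three hypotheses of Proposition \ref{PS1} for $\Dplus = \{\theta^i, \npibars{b}\}^\perp$ and $\Dminus = \{\theta^i, \npi^a\}^\perp$. Hypothesis (1), namely $I^{(\infty)}=0$, is assumed outright, so no work is needed there.

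For hypothesis (2) I would first invoke the reality of $I \subset T^*M$: since $I \otimes \C$ is closed under complex conjugation and is spanned by the $\theta^i$, each $\overline{\theta^i}$ lies in $\spanc\{\theta^j\}$, exactly as in the opening line of the proof of Corollary \ref{CF1D}. Conjugating the defining annihilator conditions for $\Dplus$ and using the notational identity $\npibars{a} = \overline{\npi^a}$, any $\overline{X}$ with $X \in \Dplus$ is annihilated by $\overline{\theta^i}$—hence by every $\theta^j$—and by $\npi^b$, so that $\overline{X} \in \Dminus$; the reverse inclusion is identical, giving $\Dminus = \overline{\Dplus}$. The decomposition $\dDC = \Dplus \oplus \Dminus$ then reads off the coframe: both summands lie in $\dDC = \{\theta^i\}^\perp$, each has complex rank $d$ (the number of $\npi^a$), their intersection is $\{\theta^i, \npi^a, \npibars{b}\}^\perp = 0$, and $\rk_\C \dDC = 2d$.

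The heart of the argument is hypothesis (3), that $\spanc\{X^+, X^-\}$ is an integral $2$-plane of $\calI \otimes \C$ for every $X^+ \in \Dplus$ and $X^- \in \Dminus$. As $\calI$ is Pfaffian it is generated algebraically by the $\theta^i$ together with the $d\theta^i$, and $\theta^i(X^\pm) = 0$ automatically because $X^\pm \in \dDC$; hence the only condition to check is $d\theta^i(X^+, X^-) = 0$. I would substitute $X^+, X^-$ into the structure equation \eqref{PED}. The terms absorbed into $\mod \theta^i$ have the form $\theta^j \wedge \beta_j$ and vanish on $(X^+, X^-)$ since $\theta^j(X^\pm) = 0$. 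For the two explicit quadratic terms the key observation is that $\npibars{a}(X^+) = 0$ for all $a$, while $\npi^a(X^-) = 0$ for all $a$: thus $(\npi^a \wedge \npi^b)(X^+, X^-) = 0$ because each of its two terms carries a factor $\npi^{c}(X^-) = 0$, and likewise $(\npibars{a} \wedge \npibars{b})(X^+, X^-) = 0$ because each term carries a factor $\npibars{c}(X^+) = 0$. This gives $d\theta^i(X^+, X^-) = 0$ for all $i$, establishing hypothesis (3).

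With the three hypotheses verified, Proposition \ref{PS1} immediately yields that $(\dD, \JJ)$ is an elliptic distribution on $\scrU$, with $\JJ$ the sub-complex structure attached to $\Dplus \oplus \Dminus$ as in Remark \ref{JfromDplus}. I expect the only point requiring genuine care to be the reality bookkeeping in hypothesis (2)—confirming that conjugation carries $\Dplus$ onto the specific subspace $\Dminus$ rather than onto some unrelated conjugate—whereas hypothesis (3) collapses to a one-line antisymmetry computation once the annihilation pattern of $X^+$ and $X^-$ against the coframe has been recorded.
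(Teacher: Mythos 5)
Your proof is correct and follows exactly the route the paper intends: Corollary \ref{CF1} is stated without proof as an immediate consequence of Proposition \ref{PS1}, and your verification of its three hypotheses (reality giving $\Dminus=\overline{\Dplus}$, the rank count for the direct sum, and the vanishing of $d\theta^i(X^+,X^-)$ via the bigraded form of \eqref{PED}) is precisely the implicit argument, carried out correctly.
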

\fi

\medskip



Given an elliptic distribution $(\dD,\JJ)$ we define its associated {\defem singular bundles}, which are complex-conjugate sub-bundles of $T^*M \otimes \C$ dual to the splitting of $\dD \otimes \C$:
\begin{equation}\label{DEFV}
\rV = (\Dminus )^\perp  \subset T^*M \otimes \C, \qquad \rVb=  (\Dplus)^\perp \subset T^*M \otimes \C.
\end{equation}
(For simplicity, we will refer to $V$ as {\em the} singular bundle, since it determines $\rVb$.)
These bundles have complex rank $\dimM - \rankDplus$, and satisfy $\rV \cap \rVb =  I \otimes \C$ where $I = \dD^\perp$. 

\begin{remark}\label{TS10}
Note that $\rV, \rVb$ determine, and are determined by, the sub-complex structure on $\dD$.  In particular, the bundle $\rV$ is spanned by 1-forms that restrict to $\dD$ to be of type $(1,0)$:
\[
\rV = \{ \psi \in T^*M \otimes \C \mid \psi(\JJ \vv) = \ri \psi(\vv), \forall \ \vv \in \dD \}.
\]
Note also that in terms of the local coframe in Lemma \ref{BasicCoframe}, we have $\rV=\spanc \{\theta^i, \npi^a\}$. 
\ifjour (It follows that the 1-dimensional complex subspaces annihilated by $V$ are singular integral elements of $\sysI$, justifying the name; see \S2 in \cite{FelsIveyArchive} for details.) \fi
Consequently, for a $\JJ$-invariant integral manifold $s:\Sigma \to M$ of $\DD$ with induced almost complex structure~${\JJSig}$,
$s^*(\rV)\subset T^*\Sigma \otimes \C$ is the bundle of $(1,0)$-forms on $\Sigma$ relative to ${\JJSig}$.
Then $T^*_{1,0} \Sigma$ will equal the pointwise span of the 1-forms $s^* \npi^a$, since $s^* \theta^i=0$. 
\end{remark}

\smallskip
The exterior differential systems we will consider in the rest of this paper include Pfaffian systems whose dual distribution is elliptic, but which satisfy the following additional assumptions:

\begin{defn}\label{EDdef}
A triple $(\sysI,\dD,\JJ)$ is an {\defem elliptic decomposable system} on $M$ if
\begin{enumerate}[label=\defenumstyle,ref=\defenumstyle]

\item\label{EDdefsys} $\sysI$ is an exterior differential system on $M$ generated algebraically by 1-forms and 2-forms;
\item\label{EDdefJ} $\dD = (I_1)^\perp$ and $\JJ$ is a sub-complex structure on $\dD$ such that 
$(\dD, \JJ)$ is elliptic in the sense of Defn. \ref{ED} above;
\item\label{EDdefV} on a neighborhood of any point in $M$, there exist a set of 2-forms $\Omega^\nu$ which are sections of $\Lambda^2 \rV$ such that $\sysI \otimes \C$ is algebraically generated by the $\Omega^\nu$, their complex conjugates $\overline{\Omega^\nu}$, and sections of $I_1$,
where $\rV, \rVb$ denote the singular bundles associated to $(\dD,\JJ)$.
\end{enumerate}
\end{defn}
 
Definition \ref{EDdef} is analogous to the notion of a (hyperbolic) {\defem decomposable system} given in~\cite{AFV}.
Note that condition \ref{EDdefV} is independent of the others.  For example, if $(\dD, \JJ)$ is an elliptic distribution and $\sysI$ is the Pfaffian
system generated by $\dD^\perp$, then \eqref{PED} implies that its generator 2-forms are sections of $\Lambda^2 \rV + \Lambda^2 \rVb$, whereas
\ref{EDdefV} implies that each 2-form $A^i_{ab} \npi^a \wedge \npi^b$ belongs to $\sysI$.
More generally, we note that for Pfaffian systems the decomposability condition \ref{EDdefV} is equivalent to the coefficients in structure equations \eqref{PED} satisfying certain linear-algebraic conditions; see Theorem 2.3 in~\cite{AFV} for details in the hyperbolic case.

However, in certain important low-dimensional cases, condition \ref{EDdefV} follows from \ref{EDdefJ}.
When $\sysI$ is a Pfaffian system arising from a PDE in the plane (i.e., $\sysI$ encodes a second-order PDE for one unknown function of two variables, or is a prolongation of such a system) the ellipticity of the PDE implies that there are complex-conjugate singular bundles such that the structure
equations satisfy condition \ref{EDdefV}; then $\dD = (I_1)^\perp$ becomes an elliptic distribution once one designates one of these as the annihilator of $\Dminus$.  
The details of the construction are given in Example \ref{introE1}. 
Likewise, for determined first-order systems for two functions in the plane, ellipticity of the symbol relations implies that the corresponding Pfaffian system is an elliptic decomposable EDS (see, e.g., the analysis in \S7.1 of \cite{BCG3}).

\subsection{Darboux Integrability}

\begin{defn}\label{DarbouxInv} Given an elliptic decomposable system $(\sysI,\dD,\JJ)$ with singular bundle $\rV$, a function $f: M \to \C$ such that $df$ is a section of $\rV^{(\infty)}$ is called a {\defem Darboux invariant}. We will let $\Invts$ denote the set of Darboux invariants.

\end{defn}

As noted in Remark \ref{TS10},  for $\JJ$-invariant integral submanifolds of $\dD$,  $s^*\rV$ defines the bundle of $(1,0)$-forms.
If the almost complex structure ${\JJSig}$ is integrable, then the restriction of any Darboux invariant to such submanifolds is a holomorphic function.  Accordingly, we will sometimes refer to functions in $\Invts$ as {\defem holomorphic} Darboux invariants. 
Since $\Dminus^{(\infty)}=(\Vinf)^\perp$ then $f:M \to \C$ is a Darboux invariant if and only if $\vX(f)=0$ 
for every section $\vX$ of  $\Dminus^{(\infty)}$.  However, since $\Dminus^{(\infty)}$ is generated by brackets of 
vector fields in $\Dminus$, this condition can be simplified as follows (see Lemma 2.8 in~\cite{AFV} for the hyperbolic case).

\begin{lemma}\label{DIX}  A function $f: M \to \C$  is a {\defem Darboux invariant} if and only if
 $\vX(f)=0$ for every section $\vX$ of $\Dminus$.
\end{lemma}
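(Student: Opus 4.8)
The plan is to deduce the Lemma from the reformulation recorded in the sentence just preceding it, namely that $f$ is a Darboux invariant if and only if $\vX(f)=0$ for every section $\vX$ of $\Dminusinf=(\Vinf)^\perp$. Since the derived flag of a distribution is increasing, $\Dminus\subseteq\Dminusinf$, and so the forward implication is immediate: a Darboux invariant is annihilated by every section of $\Dminusinf$, hence in particular by every section of $\Dminus$. All the content therefore lies in the converse.

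For the converse, the key observation I would isolate is that the collection of $\C$-valued vector fields annihilating $f$,
\[
\mathcal{A}_f = \{\, \vX \in C^\infty(TM\otimes\C) \mid \vX(f)=0 \,\},
\]
is at once a $C^\infty(M,\C)$-submodule of the sections of $TM\otimes\C$ and closed under Lie bracket. The module property is clear from $\C$-linearity of the map $\vX\mapsto \vX(f)$ over functions; bracket-closure is the one genuine computation, and it is just the derivation identity: if $\vX(f)=\vY(f)=0$ then
\[
[\vX,\vY](f)=\vX(\vY(f))-\vY(\vX(f))=\vX(0)-\vY(0)=0,
\]
which remains valid for complex vector fields acting on a complex-valued function by $\C$-bilinearity of the bracket.

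Granting this, I would run the standard induction along the derived flag $\Dminus=\Dminus^{(0)}\subseteq\Dminus^{(1)}\subseteq\cdots$, where $\Dminus^{(k+1)}=\Dminus^{(k)}+[\Dminus^{(k)},\Dminus^{(k)}]$. The hypothesis that $\vX(f)=0$ for every section $\vX$ of $\Dminus$ is precisely the inclusion $\Dminus\subseteq\mathcal{A}_f$. Assuming $\Dminus^{(k)}\subseteq\mathcal{A}_f$, bracket-closure gives $[\Dminus^{(k)},\Dminus^{(k)}]\subseteq\mathcal{A}_f$, and the module property then yields $\Dminus^{(k+1)}\subseteq\mathcal{A}_f$. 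Passing to the stabilized terminal system gives $\Dminusinf\subseteq\mathcal{A}_f$, i.e. $\vX(f)=0$ for every section $\vX$ of $\Dminusinf$; by the reformulation above this says $f\in\Invts$, completing the converse.

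I do not anticipate a serious obstacle: the statement is in essence the remark that the annihilator of a function is an involutive, module-closed distribution, so it automatically contains the smallest such distribution generated by $\Dminus$, which is exactly $\Dminusinf$. The only points needing a little care are recording correctly that $\Dminusinf$ is generated over $\Dminus$ by iterated brackets (so that the induction reaches the terminal system), and confirming that working over $\C$ does not disturb the derivation identity, which it does not.
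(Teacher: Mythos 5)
Your proof is correct and follows essentially the same route as the paper, which justifies the lemma by the remark immediately preceding it: $f$ is a Darboux invariant iff it is annihilated by sections of $\Dminusinf=(\Vinf)^\perp$, and $\Dminusinf$ is generated from $\Dminus$ by iterated brackets, so annihilation by $\Dminus$ suffices since the annihilator of $f$ is a bracket-closed module. Your write-up simply makes explicit the induction along the derived flag that the paper leaves implicit (citing the hyperbolic analogue in \cite{AFV}).
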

Classically, the existence of a sufficient number of Darboux invariants allows one to construct integral manifolds 
using techniques from ordinary differential equations; see~\cite{AFV} or Chapter 7 in~\cite{CfB2} for the hyperbolic case. 
This leads to the following definition of Darboux integrability.

\begin{defn}\label{EDIdef}
Let $M$ be a real-analytic manifold and $(\dD,\JJ)$ an real-analytic elliptic distribution on $M$. 
Then $(\dD,\JJ)$ is {\defem Darboux integrable} if its associated singular bundle satisfies
\begin{equation}\label{VDIcond}
\Vinf + \rVb=T^*M \otimes \C, 
\end{equation}
where the sum need not be direct. Equivalently, by taking the annihilator of \eqref{VDIcond},  $(\dD,\JJ)$ is Darboux integrable if 
\begin{equation}\label{DDIcond}
\Dplus^{(\infty)} \cap \Dminus = 0.
\end{equation}
We similarly define an elliptic decomposable system $(\sysI,\dD,\JJ)$ on $M$ to be Darboux integrable if $\sysI$ is a real-analytic EDS and 
$(\dD,\JJ)$ is a Darboux integrable elliptic distribution.  
\end{defn}


Analyticity is required in Definition \ref{EDIdef} in order to guarantee we can find a local basis for the Darboux invariants. 
In particular, a local basis is provided by the next lemma, which relies on the Frobenius Theorem for complex vector fields, 
which in turn requires the vector fields to be real analytic. 

For what follows, it will be convenient to introduce the following abbreviations for the dimensions and ranks of the objects we are considering:
\begin{equation}\label{rankdims}
\dim M = \dimM, \qquad \rank_{\R} \DD = \rankD, \qquad \rank_{\C} \Vinf =\rankVinf, \qquad n=m-2q.
\end{equation}

\begin{lemma} \label{Vinfdz} Let $(\DD, \JJ)$ be Darboux integrable.  Near any point in $M$ there exist real local coordinates $(x^a, y^a, t^i)$, where $1\le a \le q$ and $1\le i \le n$, such that
$$
\Vinf=\{ dz^a\}
$$
where $z^a =x^a+\ri y^a$.
\end{lemma}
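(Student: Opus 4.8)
The plan is to realize the coordinate functions as the real and imaginary parts of a maximal independent family of holomorphic Darboux invariants, which I would produce by applying the complex Frobenius theorem to the terminal derived system $\Vinf$.

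First I would observe that, being a terminal derived system, $\Vinf$ equals its own first derived system, so every local section $\psi$ of $\Vinf$ satisfies $d\psi \equiv 0 \bmod \Vinf$; dually, the annihilator $\Dminus^{(\infty)} = (\Vinf)^\perp$ is an involutive complex distribution. Since $M$, $\dD$ and $\JJ$ are real-analytic, I would then invoke the Frobenius theorem for complex vector fields (Theorem \ref{CFrob}) to obtain $q = \rank_\C \Vinf$ complex-valued functions $w^1,\dots,w^q$ with $\C$-independent differentials such that $\Vinf = \spanc\{dw^a\}$; by Definition \ref{DarbouxInv} these are holomorphic Darboux invariants.

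The crux is to show that the $2q$ real functions $\Re w^a$, $\Im w^a$ have $\R$-independent differentials, which is equivalent to the independence statement $\Vinf \cap \overline{\Vinf} = 0$. This is exactly where bracket-generation is used. Conjugating the identity $\Dminus^{(\infty)} = (\Vinf)^\perp$ and using $\overline{\Dminus} = \Dplus$ gives $\overline{\Vinf} = (\Dplus^{(\infty)})^\perp$, hence
\[
\Vinf \cap \overline{\Vinf} = \bigl(\Dplus^{(\infty)} + \Dminus^{(\infty)}\bigr)^\perp .
\]
By Lemma \ref{CDER} the sum in parentheses equals $(\dD \otimes \C)^{(\infty)} = \dD^{(\infty)} \otimes \C$, which is all of $TM \otimes \C$ because $\dD$ is bracket-generating (condition \ref{Ddefchow} of Definition \ref{ED}). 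Therefore $\Vinf \cap \overline{\Vinf} = 0$, so $dw^1,\dots,dw^q,d\overline{w}^1,\dots,d\overline{w}^q$ are $\C$-linearly independent and the real forms $d\Re w^a, d\Im w^a$ are $\R$-linearly independent.

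Finally I would put $x^a = \Re w^a$ and $y^a = \Im w^a$ and complete these $2q$ independent real functions to a local coordinate system $(x^a,y^a,t^i)$ by adjoining $n = m - 2q$ further real functions $t^i$. Then $z^a = x^a + \ri y^a = w^a$, so $dz^a = dw^a$ and $\Vinf = \spanc\{dz^a\}$, as claimed. I expect the one genuinely delicate ingredient to be the appeal to the complex Frobenius theorem — this is precisely the place where the standing real-analyticity assumption (cf. Definition \ref{EDIdef}) is indispensable — whereas the independence $\Vinf \cap \overline{\Vinf} = 0$, though it is the conceptual heart of the statement, follows from the short annihilator computation above.
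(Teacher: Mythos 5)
Your proof is correct and takes essentially the same route as the paper's: both apply the complex Frobenius theorem to the involutive distribution $\Dminus^{(\infty)}=(\Vinf)^\perp$, with real-analyticity supplying the hypothesis of Theorem \ref{CFrob} and bracket-generation plus Lemma \ref{CDER} giving $\Dminus^{(\infty)}+\Dplus^{(\infty)}=TM\otimes\C$. The only (cosmetic) difference is ordering: the paper uses this last fact up front, so that Theorem \ref{CFrob} produces no $u$-coordinates and the $z^a$ appear immediately, whereas you derive $\Vinf\cap\overline{\Vinf}=0$ afterwards --- logically that computation is best placed before the appeal to Theorem \ref{CFrob}, since the claim that all $q$ spanning differentials of $\Vinf$ can be taken genuinely complex (rather than some being differentials of real $u$-coordinates) already depends on it.
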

We call coordinates $(t^i, z^a,\overline {z^a})$ {\defem adapted coordinates}; 
these have the property that the $z^a$ give a complete set of independent holomorphic Darboux invariants.
\ifjour
As mentioned above, the proof of Lemma \ref{Vinfdz} follows from the complex Frobenius Theorem;
for further details, see \S2 in \cite{FelsIveyArchive}.
\else 
(attributed to Nirenberg \cite{StormarkFrob}), which we now state.

\begin{thm}[Complex Frobenius Theorem]\label{CFrob}   
Let $M$ be a real-analytic manifold of dimension $\dimM$.
Let $\dS \subset TM \otimes \C$ be a real-analytic complex sub-bundle of 
rank $s$ which is closed under Lie bracket.
Let $\widetilde{\dS} = \dS + \overline{\dS}$.  Assume that $\widetilde{\dS}$ has 
rank $s+\ell$ and is also
closed under Lie bracket.  Then near any point in $M$ there is a real-analytic coordinate system 
$(x^a, y^a, t^i, u^\alpha)$, where
$1 \le a \le \ell$, $1 \le i \le s-\ell$ and $1 \le \alpha \le \dimM-(s+\ell)$, such that $\dS$ is generated by
$$\dib{x^a} + \ri \dib{y^a} \quad \text{and} \quad \dib{t^i}.$$
\end{thm}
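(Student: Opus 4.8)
The plan is to reduce the theorem, through two applications of the classical real-analytic Frobenius theorem, to the integrability of a single real-analytic almost complex structure on a quotient manifold of real dimension $2\ell$, and then to integrate that structure by complexifying and invoking the \emph{holomorphic} Frobenius theorem. The real-analytic hypothesis enters only in this last step.

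First I would extract two auxiliary real distributions from the hypotheses. Since $\dS$ and $\overline{\dS}$ are each closed under bracket, so are $\dS \cap \overline{\dS}$ and $\widetilde{\dS} = \dS + \overline{\dS}$; moreover both are invariant under conjugation, so they are the complexifications of real distributions $\dD_0$ and $\widetilde{\dD}$, of real ranks $s-\ell$ and $s+\ell$ respectively (using $\dim_{\C}(\dS \cap \overline{\dS}) = 2s - (s+\ell) = s-\ell$). Both are involutive. Applying the real Frobenius theorem to $\widetilde{\dD}$ furnishes the transverse coordinates $u^\alpha$; since $\dS$, $\overline{\dS}$, and their intersection are all tangent to the leaves of $\widetilde{\dD}$, I may restrict to one leaf and assume henceforth that $\widetilde{\dS} = TM\otimes\C$ and $\dim M = s+\ell$, carrying the $u^\alpha$ along as spectators.

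Next I would straighten $\dD_0$: a second application of the real Frobenius theorem gives coordinates $(t^i, v^\mu)$ with $1\le i \le s-\ell$ and $1\le \mu \le 2\ell$, in which $\dD_0 = \spanr\{\dib{t^i}\}$. Because $\dD_0 \otimes \C \subseteq \dS$ and $\dS$ is involutive, the flows of the $\dib{t^i}$ preserve $\dS$, so $\dS$ descends to the local leaf space $Q$ (coordinatized by the $v^\mu$) as a complex sub-bundle $\widehat{\dS}$ of rank $\ell$. Projecting the identities $\dS \cap \overline{\dS} = \dD_0\otimes\C$ and $\dS + \overline{\dS} = TM\otimes\C$ yields $\widehat{\dS} \cap \overline{\widehat{\dS}} = 0$ and $\widehat{\dS} \oplus \overline{\widehat{\dS}} = TQ\otimes\C$, so $\widehat{\dS}$ is the $(-\ri)$-eigenbundle of a real-analytic, involutive almost complex structure on the $2\ell$-dimensional manifold $Q$.

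The heart of the argument is to integrate this structure, which is exactly the (real-analytic) Newlander--Nirenberg theorem and the only place analyticity is indispensable. Here I would complexify: realize $Q$ as a maximal totally real submanifold of a complex manifold $Q^{\C}$ of complex dimension $2\ell$, and extend the real-analytic vector fields spanning $\widehat{\dS}$ to holomorphic vector fields near $Q$, producing a holomorphic rank-$\ell$ distribution whose involutivity persists by analytic continuation. The holomorphic Frobenius theorem then supplies $\ell$ independent holomorphic first integrals $F^a$, and their restrictions $z^a := F^a|_Q$ are annihilated by $\widehat{\dS}$, so the $dz^a$ are of type $(1,0)$. Since restriction of $(1,0)$-forms to the totally real $Q$ is injective, the $dz^a$ are independent and, with their conjugates, span $T^*Q\otimes\C$; writing $z^a = x^a + \ri y^a$ gives real coordinates in which $\widehat{\dS} = \spanc\{\dib{\overline{z^a}}\}$. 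Pulling back and assembling with the $t^i$ and $u^\alpha$ yields the desired coordinates, in which $\dS = \spanc\{\dib{x^a} + \ri\,\dib{y^a},\ \dib{t^i}\}$ because $\dib{x^a} + \ri\,\dib{y^a} = 2\,\dib{\overline{z^a}}$. I expect the two routine verifications --- projectability of $\dS$ to $Q$ and independence of the restricted integrals --- to be the only fussy points, while the genuine obstacle is the integration step just described, circumvented by the passage to $Q^{\C}$.
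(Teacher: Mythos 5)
The paper does not prove Theorem \ref{CFrob}: it is quoted from the literature, attributed to Nirenberg and cited to St\"ormark \cite{StormarkFrob}, so there is no in-paper argument to compare against. Your proposal is a correct outline of the standard proof in the real-analytic category. The two preliminary reductions are sound: $\dS\cap\overline{\dS}$ and $\dS+\overline{\dS}$ are conjugation-invariant and involutive, hence complexifications of involutive real distributions of ranks $s-\ell$ and $s+\ell$, and the two applications of the real Frobenius theorem, together with the projectability of $\dS$ to the local leaf space $Q$ (which follows from $[\dD_0\otimes\C,\dS]\subseteq\dS$), correctly reduce everything to integrating a real-analytic integrable almost complex structure on the $2\ell$-dimensional $Q$. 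Your treatment of that last step --- complexify $Q$, extend the spanning vector fields holomorphically, note that involutivity persists by the identity theorem, apply holomorphic Frobenius, and use total reality of $Q$ in $Q^{\C}$ to see that the restricted first integrals $z^a$ together with their conjugates give coordinates --- is the classical argument and is exactly where analyticity is indispensable (in the $C^\infty$ category this step is Nirenberg's theorem and requires PDE methods). The one place to tighten the write-up is the phrase ``restrict to one leaf'': to obtain coordinates on a neighborhood in $M$ rather than on a single leaf of $\widetilde{\dD}$, the second Frobenius application and the complexification step must be carried out uniformly in the transverse coordinates $u^\alpha$, i.e., with the $u^\alpha$ as real-analytic parameters throughout. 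Since the real and holomorphic Frobenius theorems and analytic continuation all depend analytically on parameters this is routine, but it should be stated, as otherwise the construction only produces coordinates leaf by leaf with no control on how they fit together.
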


\begin{proof}[Proof of Lemma \ref{Vinfdz}]
The Frobenius Theorem is applied as follows. By construction, $\Dminusinf=(\Vinf)^\perp$ is closed under Lie bracket and  $\rk  \Dminusinf= \dimM - \rankVinf \ge \rankDplus$ because $\rk  \Vinf = \rankVinf $.  Since $\dD$ is bracket-generating, by Lemma \ref{CDER} we have
$$T^*M \otimes \C = \Dminusinf+\Dplusinf.$$ Hence $\dS = \Dminusinf$ satisfies the hypotheses of the Theorem \ref{CFrob} with $s=\dimM - \rankVinf$ and $\ell=\rankVinf$
 (i.e., no $u$-coordinates).  Thus, on a neighborhood of any given point in $M$ there are real coordinates $(x^a, y^a, t^i)$,  where $1 \le a \le \rankVinf$ and $1\le i \le  \dimG=\dimM - 2\rankVinf$, such that
if $z^a = x^a + \ri y^a$ then the $dz^a$ are a local basis for sections of $\Vinf$. 
\end{proof}
\fi

\bigskip
The Darboux integrability condition \eqref{VDIcond} leads to bounds on the number of independent Darboux invariants, given by the following proposition.

\begin{prop}\label{NumDI}  Let $(\sysI, \dD,\JJ)$ be a Darboux integrable elliptic decomposable system and let $\rankDplus, \rankVinf, \dimM$ be as in \eqref{rankdims}. 
Then
\begin{equation}
\rankDplus \leq \rankVinf \leq \lfloor  \dimM/2 \rfloor.
\label{rankqinfo}
\end{equation}
\end{prop}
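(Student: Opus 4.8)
The plan is to reduce both bounds to elementary rank counts for complex sub-bundles of $TM \otimes \C$, using the two equivalent formulations of Darboux integrability together with Lemma \ref{CDER}. First I would record the relevant ranks, writing $\rk$ for $\rk_\C$ throughout. We have $\rk(TM\otimes\C) = \dimM$, while $\rk \Dplus = \rk \Dminus = \rankDplus$ since $\dD \otimes \C = \Dplus \oplus \Dminus$ is a splitting of $\dD$, which has real rank $\rankD = 2\rankDplus$. By definition $\rk \Vinf = \rankVinf$, and since $\Dminusinf = (\Vinf)^\perp$ (as observed in the discussion preceding Lemma \ref{DIX}) we get $\rk \Dminusinf = \dimM - \rankVinf$. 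Because $\Dminus = \overline{\Dplus}$ forces $\Dplusinf = \overline{\Dminusinf}$, and conjugation preserves rank, also $\rk \Dplusinf = \dimM - \rankVinf$.

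For the lower bound $\rankDplus \le \rankVinf$, I would invoke the annihilator form \eqref{DDIcond} of Darboux integrability, namely $\Dplusinf \cap \Dminus = 0$. Both $\Dplusinf$ and $\Dminus$ are sub-bundles of $TM \otimes \C$, and their trivial intersection makes the sum direct, so $\rk(\Dplusinf \oplus \Dminus) = (\dimM - \rankVinf) + \rankDplus \le \dimM$. Cancelling $\dimM$ yields $\rankDplus \le \rankVinf$.

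For the upper bound $\rankVinf \le \lfloor \dimM/2 \rfloor$, the key structural input is that $\dD$ is bracket-generating. By Lemma \ref{CDER}, $\Dplusinf + \Dminusinf = (\dD \otimes \C)^{(\infty)} = \dD^{(\infty)} \otimes \C$, and condition \ref{Ddefchow} of Definition \ref{ED} gives $\dD^{(\infty)} = TM$, so $\Dplusinf + \Dminusinf = TM \otimes \C$. Counting ranks via inclusion--exclusion, $\dimM = \rk(\Dplusinf + \Dminusinf) = 2(\dimM - \rankVinf) - \rk(\Dplusinf \cap \Dminusinf)$, whence $\rk(\Dplusinf \cap \Dminusinf) = \dimM - 2\rankVinf \ge 0$. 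Thus $\rankVinf \le \dimM/2$, and since $\rankVinf$ is an integer this forces $\rankVinf \le \lfloor \dimM/2 \rfloor$.

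Since the whole argument is rank bookkeeping, I do not expect a serious obstacle; the only points requiring care are the correct identification of the annihilators ($\Dminusinf = (\Vinf)^\perp$ and its conjugate $\Dplusinf = (\overline{\Vinf})^\perp$), so that both derived distributions have rank $\dimM - \rankVinf$, and the recognition that the bracket-generating hypothesis is exactly what promotes the sum $\Dplusinf + \Dminusinf$ to the full complexified tangent bundle. One could equally run both counts on the cotangent side, using \eqref{VDIcond} and the dual fact $\Vinf \cap \overline{\Vinf} = 0$, which is merely the annihilator of the same two statements.
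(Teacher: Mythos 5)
Your proof is correct. The lower bound is essentially the paper's argument read on the dual side: the paper counts ranks in $\Vinf + \rVb = T^*M\otimes\C$ (equation \eqref{VDIcond}) to get $q+(m-d)\ge m$, while you count them in the annihilated form $\Dplusinf\cap\Dminus=0$ of \eqref{DDIcond} to get $(m-q)+d\le m$; these are the same inequality. Your upper bound, however, takes a genuinely different route. The paper first computes $\rank(\rVb\cap\Vinf)=q-d$ in \eqref{numeta}, invokes Lemma \ref{Vinfdz} (hence adapted coordinates and the complex Frobenius theorem) to get $\Vinf\cap\overline{\Vinf}=0$, and then uses the containment \eqref{containment} of $(\rVb\cap\Vinf)\oplus(\rV\cap\rVb^{(\infty)})$ inside $I\otimes\C$ to conclude $2(q-d)\le m-2d$. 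You instead go straight to Lemma \ref{CDER} and the bracket-generating hypothesis to obtain $\Dplusinf+\Dminusinf=TM\otimes\C$, whence $\rank(\Dplusinf\cap\Dminusinf)=m-2q\ge 0$; dually this is just the statement $\Vinf\oplus\overline{\Vinf}\subseteq T^*M\otimes\C$, which gives $2q\le m$ in one line. Your route buys two things: it shows the upper bound holds for \emph{any} elliptic distribution, with no appeal to Darboux integrability, and it bypasses Lemma \ref{Vinfdz} (and with it the real-analyticity hypothesis) entirely for this bound. What it does not produce is the identity \eqref{numeta}, $\rank(\rVb\cap\Vinf)=q-d$, which the paper's computation yields as a byproduct and reuses in the proof of Theorem \ref{oneadaptedexist}.
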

\begin{proof} Since $\rank \Dplus=d$ then $\rank  \rVb=m-d$ and \eqref{VDIcond} implies that $q \geq d$, giving the lower bound in \eqref{rankqinfo}. 
Equation \eqref{VDIcond} also implies that $m = \rank \Vinf+ \rank \rVb -\rank (\rVb \cap \Vinf)$, and so
\begin{equation}
\rank(\rVb \cap \Vinf) = \rank \Vinf+ \rank \rVb -m = q -d.
\label{numeta}
\end{equation}
By Lemma \ref{Vinfdz} there are local coordinates
such that $\Vinf=\{ dz^a\} $, $1\le a \le q$.  This implies that
$\Vinf \cap \overline{\Vinf} = 0$ and hence $ (\rVb \cap \Vinf) \cap ( \rV \cap \rVb^{(\infty)} )=0$.  Since 
\begin{equation}\label{containment}
(\rVb \cap \Vinf )\oplus(\rV \cap \rVb^{(\infty)} )\subset I\otimes \C
\end{equation}
then $2\, \rank (\rVb \cap \Vinf ) \leq \rank I = m-2d$.
Substituting this into equation \eqref{numeta} and simplifying leads to the upper bound in \eqref{rankqinfo}.
\end{proof}


\begin{defn}\label{maxmindef}
We will say that a Darboux integrable system 
\ifarch is {\defem minimal} if $\rank \Vinf = \rankDplus$, its minimal value in \eqref{rankqinfo}.
(In other words, $\Vinf$ is no larger than necessary to fulfill condition \eqref{VDIcond}, and the sum in \eqref{VDIcond} is direct.)
On the other hand, we will say the system 
\fi
is {\defem maximal} if $m=\dim M$ is even and $\rank \Vinf$ attains its maximum value $m/2$ in \eqref{rankqinfo}.
Maximally Darboux integrable systems are characterized in the next section.
\end{defn}


We now find a local coframe for $T^*M \otimes \C$ adapted to the condition in equation \eqref{VDIcond}. 

\begin{thm}\label{oneadaptedexist}  Let $(\DD, \JJ)$ be  Darboux integrable elliptic distribution on a manifold $M$ of dimension $m$ and let $I=\DD^\perp$.
Let $\rank I=m- 2d$ and let $q = \rank \Vinf$.  Then near any point in $\mpt \in M$ there exists an open set $\Udown\subset M$ about $\mpt$ and a complex coframe $(\theta^i, \eta^r, \sigma^u, \etabars{r}, \sigmabars{u})$ on $\Udown$ 
 such that 
\begin{enumerate}[label=\thmenumstyle,ref=\thmenumstyle]
\item 
$\Vinf\vert_\Udown = \spanc \{ \eta^r, \sigma^u\}$, where $1 \le r \le \rankVinf - \rankDplus, \ 1\le u \le  \rankDplus$;

\item
$(\IC \otimes \C)\vert_\Udown= \spanc \{\theta^i,\eta^r,  \etabars{r}\}$, where $1 \leq i \leq n=m-2q$;

\item the coframe satisfies structure equations
\begin{subequations}\label{onedapt}
\begin{align}
d\sigma^u&=0, \label{onedsigma} \\
d\eta^r &=\tfrac12 E^r_{u v} \sigma^u \wedge \sigma^v +
F^r_{u s} \sigma^u \wedge \eta^s,\label{onedeta} \\
d\theta^i &\equiv \tfrac12 A^i_{ab} \pi^a \wedge \pi^b + \tfrac12 P^i_j \overline{A^j_{ab}} \pibars{a}\wedge \pibars{b} \qquad 
\mod \ \{\theta^i\}, \  \label{onedtheta}
\end{align}
\end{subequations}
where in \eqref{onedtheta} we amalgamate $(\eta^r, \sigma^u)$ into the vector $\vpi= (\pi^a)$, $1\le a \le \rankVinf$, and $P^i_j$ are entries in a nonsingular matrix of functions such that 
\begin{equation}\label{Pmoddef}
\theta^i\equiv P^i_j \overline{\theta^j} \ \mod \eta^r,\etabars{r};
\end{equation}
\item the coefficients $A^i_{ab}, E^r_{uv}, F^r_{u s}$ are Darboux invariants.
\end{enumerate}
\end{thm}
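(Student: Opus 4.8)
The plan is to build the coframe by refining the splitting of $T^*M\otimes\C$ dictated by the nested integrable systems $\Vinf=\rV^{(\infty)}$ and $\rVb^{(\infty)}=\overline{\Vinf}$, both of which are bracket-closed and hence Frobenius. First I would invoke Lemma \ref{Vinfdz} to obtain holomorphic Darboux invariants $z^a$ ($1\le a\le q$) with $\Vinf=\spanc\{dz^a\}$; in particular every section of $\Vinf$ is a combination of the closed forms $dz^a$. Next I record the lattice of intersections that appears in the proof of Proposition \ref{NumDI}: writing $W:=\Vinf\cap\rVb=\Vinf\cap(I\otimes\C)$, equation \eqref{numeta} gives $\rank_\C W=q-d$, conjugation identifies its conjugate with $\rV\cap\rVb^{(\infty)}$, and \eqref{containment} shows $W\oplus\overline{W}\subset I\otimes\C$ while $\Vinf\cap\overline{\Vinf}=0$.

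With this lattice in hand I construct the coframe. I take $\eta^r$ ($1\le r\le q-d$) to be a basis of sections of $W$ and $\etabars{r}$ their conjugates, spanning $\overline{W}$. Since the closed sections of $\Vinf$ surject onto $\Vinf/W\cong\rV/(I\otimes\C)$, I may choose $\sigma^u$ ($1\le u\le d$) among the $dz^a$-combinations to be closed forms completing $\{\eta^r\}$ to a basis of $\Vinf$; this yields \eqref{onedsigma} and property~(1). I then choose $\theta^i$ ($1\le i\le n=m-2q$) to complete a basis of the conjugation-invariant bundle $I\otimes\C$ transverse to $W\oplus\overline{W}$, giving property~(2). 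Because $I\otimes\C$ and $W\oplus\overline{W}$ are both closed under conjugation, conjugation descends to the quotient $(I\otimes\C)/(W\oplus\overline{W})$ and carries $[\theta^i]$ to $[\overline{\theta^i}]$; expressing one basis in terms of the other produces the invertible matrix $P$ with $\theta^i\equiv P^i_j\,\overline{\theta^j}\ \mod \eta^r,\etabars{r}$, which is \eqref{Pmoddef}, exactly as in Corollary \ref{CF1D}.

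It remains to establish the structure equations and property~(4). For \eqref{onedeta} I start from integrability of $\Vinf$, so that $d\eta^r\in\langle\eta,\sigma\rangle$, together with the elliptic relation \eqref{PED}--\eqref{PED2} applied to $\eta^r\in I\otimes\C$: modulo $I\otimes\C$ the differential $d\eta^r$ is a sum of a $\vpi\wedge\vpi$ and a $\vpibar\wedge\vpibar$ term with no mixed part, and integrability then kills the anti-holomorphic $\vpibar\wedge\vpibar$ piece, leaving $d\eta^r\equiv\tfrac12 E^r_{uv}\sigma^u\wedge\sigma^v\ \mod I\otimes\C$. Equation \eqref{onedtheta} is the elliptic structure equation of Corollary \ref{CF1D} re-expressed in the amalgamated coframe $\vpi=(\eta^r,\sigma^u)$: the bracket relation \eqref{DDB} forbids the $\vpi\wedge\vpibar$ cross terms, and \eqref{Pmoddef} forces the conjugate block to be $P^i_j\,\overline{A^j_{ab}}$. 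For property~(4) I differentiate the structure equations, using $d\sigma^u=0$ and $d^2=0$; the resulting identities force the $\Dminus$-derivatives of $A^i_{ab},E^r_{uv},F^r_{us}$ to vanish, so by Lemma \ref{DIX} these coefficients lie in $\Invts$.

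The hard part will be completing the reduction of \eqref{onedeta} to the strict triangular form, namely eliminating the residual $\sigma\wedge\theta$, $\sigma\wedge\etabars{}$, $\eta\wedge\eta$ and $\eta\wedge(\theta,\etabars{},\sigmabars{})$ terms that integrability and decomposability alone leave inside $\langle I\otimes\C\rangle$, and in parallel suppressing every mixed term in \eqref{onedtheta} beyond the single $\sigma\wedge\sigmabars{}$ type controlled by \eqref{DDB}. I expect to handle this by exploiting the two transverse integrable foliations $\Dplusinf$ and $\Dminusinf$ simultaneously: restricting $d\eta^r$ to the leaves of $\Dplusinf$, on which $\etabars{r},\sigmabars{u}$ vanish and the induced $\widetilde{\JJ}$ is a genuine complex structure, together with the conjugate restriction, isolates the offending coefficients as brackets that must vanish, after which the remaining freedom—adjusting $\eta^r$ within $W$ and $\theta^i$ within its coset—is used to normalize them away. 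This is the step where real-analyticity is essential, since the normalizations are propagated by the complex Frobenius theorem (Theorem \ref{CFrob}); it is also the point of closest contact with, and sharpest divergence from, the hyperbolic construction of \cite{AFV}.
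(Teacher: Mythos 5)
Your overall skeleton---adapted coordinates from Lemma \ref{Vinfdz}, the rank count $\rank\bigl((I\otimes\C)\cap\Vinf\bigr)=q-d$ from \eqref{numeta}, taking $\eta^r$ to span that intersection, $\sigma^u$ closed and completing a basis of $\Vinf$, $\theta^i$ completing $I\otimes\C$ with $P$ coming from conjugation-invariance, and a $d^2=0$ argument for part (4)---is the paper's construction. The genuine gap is at \eqref{onedeta}. You take $\eta^r$ to be an \emph{arbitrary} basis of sections of $(I\otimes\C)\cap\Vinf$ and deduce only the congruence $d\eta^r\equiv\tfrac12E^r_{uv}\sigma^u\wedge\sigma^v\ \mathrm{mod}\ I\otimes\C$ together with the fact that $d\eta^r$ lies in the algebraic ideal of $\Vinf$; as you note yourself, this leaves $\theta\wedge\sigma$, $\theta\wedge\eta$, $\eta\wedge\eta$, $\overline{\eta}\wedge\sigma$, $\overline{\eta}\wedge\eta$ and $\overline{\sigma}\wedge\eta$ terms unaccounted for, and you defer their removal to a ``hard part'' involving restriction to leaves of $\Dplusinf$ and a renormalization propagated by Theorem \ref{CFrob}. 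That plan does not close the gap: replacing $\eta^r$ by $\tilde\eta^r=Q^r_s\eta^s$ introduces $dQ^r_s\wedge\eta^s$, so the residual terms vanish only if $Q$ can be chosen with entries in $\Invts$, and producing such a $Q$ is precisely the problem you are trying to solve. No pointwise or leafwise adjustment inside $(I\otimes\C)\cap\Vinf$ will do it.

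The paper resolves this not by a further normalization problem but by anchoring the $\eta^r$ to the exact forms already supplied by Lemma \ref{Vinfdz}: after renumbering one takes $\sigma^u=dz^u$ and $\eta^r=dz^{r+d}+R^r_u\,dz^u$. Then $d\eta^r=dR^r_u\wedge\sigma^u$, and the one consequence of \eqref{DDB} you already invoke, namely $d\eta^r(\vS_u,\overline{\vS_v})=-\eta^r([\vS_u,\overline{\vS_v}])=0$ because $\eta^r\in I\otimes\C$, now reads $\overline{\vS_v}(R^r_u)=0$; by Lemma \ref{DIX} this gives $R^r_u\in\Invts$, so $dR^r_u$ is a combination of the $dz^a$ alone. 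Substituting $dz^u=\sigma^u$ and $dz^{r+d}=\eta^r-R^r_u\sigma^u$ yields \eqref{onedeta} exactly, with $E^r_{uv},F^r_{us}\in\Invts$ for free. In other words, the step you flag as hard becomes a one-line computation once the $\eta^r$ are normalized against the closed coordinate differentials, and no analytic continuation or second appeal to the complex Frobenius theorem is needed beyond the single application already made in Lemma \ref{Vinfdz}. With \eqref{onedeta} in hand, the remainder of your argument (absence of $\sigma^u\wedge\overline{\sigma^v}$ terms in $d\theta^i$ from \eqref{DDB}, the identity $B^i_{ab}=P^i_j\overline{A^j_{ab}}$, and $A^i_{ab}\in\Invts$ from differentiating \eqref{onedtheta}) proceeds as in the paper.
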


The coframe $(\theta^i, \eta^r, \sigma^u, \etabars{r}, \sigmabars{u})$ in Theorem \ref{oneadaptedexist} is the analogue of the `1-adapted' coframe in Theorem 2.9 in \cite{AFV} (compare \eqref{dtheta1} with (2.14) in \cite{AFV}).  We will likewise call this coframe 1-adapted. 

%

\begin{proof}
Let $(x^a, y^a, t^i)$ be adapted coordinates near $\mpt$.  Note that
by equation \eqref{numeta} the rank of $(\IC \otimes \C) \cap \Vinf$ equals $\rankVinf - \rankDplus$.
We may choose 1-forms $\eta^r$, $1 \le r \le \rankVinf - \rankDplus$, forming a basis of sections of 
$(\IC \otimes \C) \cap \Vinf$, and complete them with 1-forms $\sigma^u$, $1\leq u \leq d$, to form a basis of sections of $\Vinf$, so that
(after suitably re-numbering the $z^a$)
\begin{align}
\sigma^u &=dz^{u},  & & 1\le u \le  \rankDplus  \label{zucoords} \\
\eta^r &=dz^{r+\rankDplus} + R^r_u dz^{u}, & &1 \le r \le \rankVinf - \rankDplus.  \notag
\end{align}
Because $d\eta^r$ contains no terms of the form $d\zbars{b} \wedge dz^a$ or $dt^i \wedge dz^a$, then $dR^r_u \in V$, and hence $R^r_u \in \Invts$.  As argued in the proof of Prop. 10.2.7 in \cite{CfB2}, the 1-forms $\eta^r$ then satisfy structure equation \eqref{onedeta} and moreover $E^r_{uv}$ and $F^r_{us}$ also belong to $\Invts$. 

We may now choose 1-forms $\theta^i$, $1\le i \le n= m-2q$, defined on a possibly a smaller neighborhood of $\mpt$, such that $(\theta^i, \eta^r, \etabars{r})$ form a basis of sections for $\IC \otimes \C$. 
Since $\IC\otimes \C$ is preserved by complex conjugation, a nonsingular matrix $P$ exists satisfying \eqref{Pmoddef}.
Moreover, since $\IC \otimes \C = \rV \cap \rVb$ and $\Vinf \subset \rV$, then $(\theta^i, \eta^r, \sigma^u, \etabars{r})$ is
a local basis of sections for $V$.

Let $(\vT_i, \vN_r, \vS_u, \overline{\vN_r}, \overline{\vS_u})$ be the dual frame to $(\theta^i, \eta^r, \sigma^u, \etabars{r}, \sigmabars{u})$. 
Note that $\Dplus$ is spanned at each point by the vector fields $\vS_u$, and $\Dplusinf$ by the vector fields $\vT_i, \vN_r, \vS_u$. The forms $\theta^i$ satisfy $\theta^i([\vS_u, \overline{\vS_v}]) = 0$ on account of \eqref{DDB},  so that
the structure equations for $\theta^i$ have the form
\begin{equation}
d \theta^i \equiv \tfrac12 A^i_{ab} \pi^a \wedge \pi^b + \tfrac12 B^i_{ab} \overline{\pi^a} \wedge \overline{\pi^b} \qquad {\rm mod} \quad \ \{\, \theta^i \,\} .
\label{dtheta1}
\end{equation}
Applying the argument in the proof of Lemma \ref{BasicCoframe}, we have
$B^i_{ab}= P^i_j \overline{A^j_{ab}}$. We have now established that \eqref{onedsigma},\eqref{onedeta}, \eqref{onedtheta} hold.

Finally, taking the exterior derivative of equation \eqref{onedtheta}, the coefficient of 
$\overline{\sigma^u}\wedge \pi^a \wedge \pi^b $ is $\overline{\vS_u}( A^i_{ab})$ which must vanish.
Lemma \ref{DIX} then implies $A^i_{ab} \in \Invts$.
\end{proof}

\ifarch
\begin{remark}
In Remark \ref{TS10} it was noted that, for a $\JJ$-invariant integral manifold $s:\Sigma \to M$
of  an elliptic distribution $(\dD,\JJ)$ with induced almost complex structure $ \JJSig$, $ T^*_{1,0}\Sigma =s^* (V)$ and
is spanned pointwise by the forms $s^* \sigma^u$.  Equation  \eqref{onedsigma} shows that when $(\dD,\JJ)$ is Darboux integrable, ${\JJSig}$ is integrable, and the local coordinates from \eqref{zucoords} provided by the Frobenius Theorem will induce
holomorphic coordinates on $\Sigma$. 
The Darboux invariants on $M$ then restrict to be holomorphic functions on $\Sigma$.  While Darboux integrability provides a sufficient condition that $ \JJSig$ be integrable, 
Example 7.4.20 in \cite{CfB2} gives a representation of the minimal surface equation by an elliptic decomposable system on a 5-manifold; 
while the system is not Darboux integrable (but its prolongation is) the induced almost complex structure $ \JJSig$ on  any $\JJ$-invariant integral submanifold is integrable for dimensional reasons.
\end{remark}
\fi

\begin{cor}\label{Abelong}  Let $(\sysI, \dD, \JJ)$ be a Darboux integrable elliptic decomposable system, and 
$(\theta^i, \eta^r, \sigma^u, \etabars{r}, \sigmabars{u})$ a 1-adapted coframe satisfying structure equations \eqref{onedapt}.  Then the 2-forms defined by
\begin{equation}\label{tfAdef}
\tfA^i := \tfrac12 A^i_{ab} \pi^a \wedge \pi^b
\end{equation}
lie in $\sysI \otimes \C$, along with their complex conjugates.
\end{cor}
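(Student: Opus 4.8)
The plan is to first show that the \emph{combination} $\tfA^i + P^i_j\,\overline{\tfA^j}$ lies in $\sysI\otimes\C$, and then to use the decomposability hypothesis \ref{EDdefV} to separate off the individual summand $\tfA^i$. For the first step, I would observe that, by item (2) of Theorem \ref{oneadaptedexist}, the $\theta^i$ are sections of $\IC\otimes\C$, hence belong to $\sysI\otimes\C$; since $\sysI$ is a differential ideal, $d\theta^i\in\sysI\otimes\C$ as well. The structure equation \eqref{onedtheta} states that $d\theta^i\equiv\tfA^i+P^i_j\,\overline{\tfA^j}\pmod{\{\theta^i\}}$, where the discarded terms have the form $\theta^k\wedge(\cdot)$ and so already lie in $\sysI\otimes\C$. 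Subtracting them gives $\tfA^i+P^i_j\,\overline{\tfA^j}\in\sysI\otimes\C$.

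The heart of the matter is isolating $\tfA^i$. Here I would reduce everything modulo the degree-two part $\langle\IC\rangle$ of the algebraic ideal generated by the $1$-forms of $\sysI$, that is, by $\{\theta^i,\eta^r,\etabars{r}\}$. In the quotient $\Lambda^2(T^*M\otimes\C)/\langle\IC\rangle$ the only surviving coframe $1$-forms are $\sigma^u$ and $\sigmabars{u}$. Since $\tfA^i$ is built from the $\pi^a=(\eta^r,\sigma^u)$, it reduces to a $(2,0)$-form $\tfrac12 A^i_{uv}\,\sigma^u\wedge\sigma^v$, whereas $P^i_j\,\overline{\tfA^j}$ reduces to a $(0,2)$-form in the $\sigmabars{u}$. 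I would then invoke \ref{EDdefV}: $\sysI\otimes\C$ is algebraically generated by $2$-forms $\Omega^\nu\in\Lambda^2\rV$, their conjugates $\overline{\Omega^\nu}\in\Lambda^2\rVb$, and sections of $\IC$. Because $\rV=\spanc\{\theta^i,\eta^r,\sigma^u,\etabars{r}\}$ (as established in the proof of Theorem \ref{oneadaptedexist}) while $\IC\otimes\C=\spanc\{\theta^i,\eta^r,\etabars{r}\}$, we have $\rV/(\IC\otimes\C)=\spanc\{\sigma^u\}$; hence each $\Omega^\nu$ reduces modulo $\langle\IC\rangle$ to a \emph{pure} $(2,0)$-form in the $\sigma^u$, and each $\overline{\Omega^\nu}$ to a pure $(0,2)$-form.

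To finish, I would write $\tfA^i+P^i_j\,\overline{\tfA^j}$ as a combination $\sum_\nu f^i_\nu\Omega^\nu+\sum_\nu g^i_\nu\overline{\Omega^\nu}$ modulo $\langle\IC\rangle$ and match the $(2,0)$-components, which form a subspace complementary to the $(1,1)$- and $(0,2)$-pieces of $\Lambda^2\spanc\{\sigma^u,\sigmabars{u}\}$. This yields $\tfA^i\equiv\sum_\nu f^i_\nu\Omega^\nu\pmod{\langle\IC\rangle}$; lifting back, $\tfA^i-\sum_\nu f^i_\nu\Omega^\nu\in\langle\IC\rangle\subset\sysI\otimes\C$ while $\sum_\nu f^i_\nu\Omega^\nu\in\sysI\otimes\C$, so $\tfA^i\in\sysI\otimes\C$. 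The conjugate $\overline{\tfA^i}\in\sysI\otimes\C$ then follows because $\sysI$ is real and so $\sysI\otimes\C$ is invariant under complex conjugation.

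I expect the main obstacle to be precisely the separation step: the sum lying in $\sysI$ is immediate from $\sysI$ being a differential ideal, but extracting $\tfA^i$ on its own is false for a general Pfaffian system and uses decomposability in an essential way. The delicate point to verify carefully is that the generators $\Omega^\nu$ reduce to pure $(2,0)$-type with no $(1,1)$- or $(0,2)$-contamination — this is exactly what the containment $\Omega^\nu\in\Lambda^2\rV$ together with the computation $\rV/(\IC\otimes\C)=\spanc\{\sigma^u\}$ guarantees, and it is what makes the component-matching legitimate.
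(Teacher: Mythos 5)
Your proposal is correct and follows essentially the same route as the paper: both express $\tfA^i + P^i_j\overline{\tfA^j}$ as a combination of the decomposable generators $\Omega^\nu,\overline{\Omega^\nu}$ modulo $I_1\otimes\C$, and then use the splitting of $\Lambda^2\bigl((T^*M/I_1)\otimes\C\bigr)$ induced by $\rV/(I_1\otimes\C)\oplus\rVb/(I_1\otimes\C)$ to match the $(2,0)$-components and isolate $\tfA^i$. The only cosmetic difference is at the very end, where the paper deduces $\overline{\tfA^i}\in\sysI\otimes\C$ from the nonsingularity of $P$ while you use conjugation-invariance of $\sysI\otimes\C$; both are valid.
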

\begin{proof} Let $\Omega^\nu, \overline{\Omega^\nu}$ be the local 2-form generators of $\sysI\otimes \C$ provided by part \ref{EDdefV} of Definition \ref{EDdef}.
Then structure equation \eqref{onedtheta} implies that 
\begin{equation}\label{fonzie}
\tfA^i + P^i_j \overline{\tfA^j} \equiv S^i_\nu \Omega^\nu + T^i_\nu \overline{\Omega^\nu}\qquad \mod I_1\otimes\C
\end{equation}
for some functions $S^i_\nu, T^i_\nu$.  The splitting 
$$(T^*M /I_1) \otimes \C \cong \rV / (I_1 \otimes \C) \oplus \rVb / (I_1 \otimes \C)$$
induces a splitting of $\Lambda^2 (T^*M /I_1) \otimes \C$.  By applying the latter splitting to both sides of \eqref{fonzie}
we see that $\tfA^i \equiv S^i_\nu \Omega^\nu$ and $P^i_j \overline{\tfA^j} \equiv T^i_\nu \overline{\Omega^\nu}$ mod $I_1$.
Since $P$ is non-singular, then $\tfA^i,\overline{\tfA^i} \in \sysI \otimes \C$.
\end{proof}

\begin{defn} \label{SSdef} Let $(\sysI, \dD, \JJ)$ be a Darboux integrable elliptic decomposable system with associated singular bundle $V$.  We define its {\defem singular system} $\calV$ as the algebraic ideal generated (over $\C$) by sections of $V$ and the 2-forms of $\sysI$.  
\end{defn}


\begin{prop}
The singular system $\calV$ associated to a Darboux integrable elliptic decomposable system $(\sysI, \dD, \JJ)$ 
is a differential ideal and contains $\sysI \otimes \C$.
\end{prop}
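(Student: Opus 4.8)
The plan is to treat the two claims in turn, working with the $1$-adapted coframe $(\theta^i,\eta^r,\sigma^u,\etabars{r},\sigmabars{u})$ of Theorem \ref{oneadaptedexist} and its structure equations \eqref{onedapt}. From that theorem $\rV=\spanc\{\theta^i,\eta^r,\sigma^u,\etabars{r}\}$ and $I_1\otimes\C=\spanc\{\theta^i,\eta^r,\etabars{r}\}$; recall also that the singular bundles satisfy $\rV\cap\rVb=I_1\otimes\C$, so that $I_1\otimes\C\subseteq\rV$, and that $\sysI\otimes\C$ is itself a differential ideal, being the complexification of the EDS $\sysI$. For the containment $\sysI\otimes\C\subseteq\calV$ I would argue that $\sysI\otimes\C$ is generated algebraically by its $1$-forms, which are sections of $I_1\otimes\C\subseteq\rV$ and hence lie in $\calV$ by definition, together with its $2$-forms, which are generators of $\calV$ by definition; since $\calV$ is an algebraic ideal, the containment follows at once.

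For the differential-ideal property I would reduce $d\calV\subseteq\calV$ to checking exterior derivatives of generators. For a generator $s$ and an arbitrary form $\alpha$ one has $d(\alpha\wedge s)=d\alpha\wedge s\pm\alpha\wedge ds$, whose first term retains the factor $s\in\calV$; so closure reduces to verifying $ds\in\calV$ for the generators $s$. Since $\calV$ is generated by sections of $\rV$ and by the $2$-forms of $\sysI$, and since $d(f\psi)=df\wedge\psi+f\,d\psi$ with $df\wedge\psi\in\calV$ whenever $\psi$ is a section of $\rV$, it is enough to check the four coframe members spanning $\rV$ and the $2$-form generators of $\sysI$. The $2$-forms $\omega$ of $\sysI$ satisfy $d\omega\in\sysI\otimes\C\subseteq\calV$ by the first part. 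The coframe members $\theta^i,\eta^r,\etabars{r}$ are sections of $I_1\otimes\C$, so $d\theta^i,d\eta^r,d\etabars{r}\in\sysI\otimes\C\subseteq\calV$, again because $\sysI$ is a differential ideal. Finally $d\sigma^u=0$ by \eqref{onedsigma}. This exhausts the generators and gives $d\calV\subseteq\calV$.

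The step I expect to require the most care is the remaining coframe direction $\sigma^u$, which lies in $\rV$ but \emph{not} in $I_1\otimes\C$, so that the differential-ideal property of $\sysI$ says nothing directly about $d\sigma^u$; here Darboux integrability is exactly what rescues the argument, since the $\sigma^u=dz^u$ are differentials of the holomorphic Darboux invariants furnished by Lemma \ref{Vinfdz}, whence $d\sigma^u=0$. It is also worth flagging why $\calV$ must contain the conjugate $2$-forms of $\sysI$, rather than merely the ideal generated by sections of $\rV$: expanding $d\etabars{r}$ from the conjugate of \eqref{onedeta} (or $d\theta^i$ from \eqref{onedtheta}) produces terms such as $\tfrac12\overline{E^r_{uv}}\,\sigmabars{u}\wedge\sigmabars{v}$ lying in $\Lambda^2\rVb$, which are not in the algebraic ideal generated by $\rV$ alone. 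My argument avoids any hand-analysis of these terms: because $\etabars{r}$ is a section of the conjugation-invariant bundle $I_1\otimes\C$, reality of $\sysI$ forces $d\etabars{r}\in\sysI\otimes\C\subseteq\calV$ outright, and Corollary \ref{Abelong} confirms that the relevant conjugate $2$-forms already belong to $\sysI\otimes\C$. This is precisely the structural role played by the conjugate $2$-form generators in the definition of $\calV$.
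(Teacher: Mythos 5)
Your proof is correct and follows essentially the same route as the paper: the containment $\sysI\otimes\C\subseteq\calV$ is read off from the generators (the $1$-forms of $\sysI$ lie in $I_1\otimes\C\subseteq\rV$ and its $2$-forms are generators of $\calV$ by definition), and differential closure reduces to the fact that $\sysI$ is itself a differential ideal together with $d\sigma^u=0$ from \eqref{onedsigma}. The paper phrases this with the explicit local generating set $\{\theta^i,\eta^r,\overline{\eta^r},\sigma^u,\overline{\Omega^\nu}\}\subalg$ for $\calV$, but the content is identical.
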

\begin{proof}  Let $(\theta^i, \eta^r, \sigma^u, \etabars{r}, \sigmabars{u})$ be a 1-adapted coframe on an open set $\Udown$ about $\mpt \in M$ as in Theorem \ref{oneadaptedexist}, and let $\Omega^\nu$ be the set of generator 2-forms in $\Lambda^2 V$
referred to in condition \ref{EDdefV} in Definition \ref{EDdef}.  Then 
\begin{equation}
(\calI \otimes \C)\vert_\Udown = \{\ \theta^i,\ \eta^r,\ \overline {\eta^r},\ \Omega^\nu ,\ \overline{\Omega^\nu}\ \}\subalg .
\label{calI1a}
\end{equation}
On the other hand, 
\begin{equation}
\calV \vert_\Udown=  \{\ \theta^i,\ \eta^r,\  \overline {\eta^r},  \sigma^u, \ \overline{\Omega^\nu}\ \}\subalg
\label{calV1a}
\end{equation}
showing that $\sysI\otimes \C \subset \calV$.  Since $\sysI$ is differentially closed (by definition) and 
$\theta^i,\eta^r, \overline {\eta^r}, \overline{ \Omega^\nu} \in \sysI\otimes \C$
we need only observe from the structure equation \eqref{onedsigma} that trivially $d \sigma^u \in \calV$.  Thus $\calV$ is differentially closed.
\end{proof}

We now restrict our attention to the class of elliptic Darboux integrable systems whose singular system $\calV$ is Pfaffian. This is analogous to the class of hyperbolic systems whose singular systems form a Darboux pair (see Definition 1.3 in \cite{AFV}).



\begin{lemma}\label{LocIV} Let $(\sysI, \dD,\JJ)$ be a  Darboux integrable elliptic decomposable system whose singular system $\calV$  is Pfaffian, and let $\mpt \in M$ and 
$(\theta^i,\eta^r,\etabars{r}, \sigma^u, \sigmabars{u})$ be a 1-adapted coframe on the open set $\Udown$ about $\mpt$ satisfying equations \eqref{onedsigma}, \eqref{onedeta}, and \eqref{onedtheta} of Theorem \ref{oneadaptedexist}. Then
\begin{subequations}
\begin{align}
\calV\vert_\Udown&=\{\ \theta^i,\ \eta^r,\  \etabars{r},\ \sigma^u ,\overline{d\eta^{r}} , \ 
\overline{\tfA^i}\ \}\subalg, \label{LocalV}\\
(\sysI\otimes \C)\vert_\Udown &= \{\ \theta^i,\ \eta^r,\ \etabars{r}, \ d \eta^r ,\ \overline{d\eta^{r}} ,\ \tfA^i
,\ \overline{\tfA^i}\ \}\subalg, \label{LocalI}
\end{align}
\end{subequations}
where $\tfA^i$ is defined as in \eqref{tfAdef}.
\end{lemma}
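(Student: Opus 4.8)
The plan is to start from the two facts established earlier in this subsection and combine them. First, recall from Corollary \ref{Abelong} that the 2-forms $\tfA^i = \tfrac12 A^i_{ab}\pi^a\wedge\pi^b$ (and their conjugates $\overline{\tfA^i}$) lie in $\sysI\otimes\C$, and recall from structure equation \eqref{onedtheta} that modulo $\{\theta^i\}$ we have $d\theta^i \equiv \tfA^i + P^i_j\overline{\tfA^j}$. Second, recall from the proof of the preceding proposition (equations \eqref{calI1a} and \eqref{calV1a}) that in terms of a 1-adapted coframe, the $\Omega^\nu$ generating $\sysI\otimes\C$ can be organized so that $(\calI\otimes\C)\vert_\Udown = \{\theta^i,\eta^r,\etabars{r},\Omega^\nu,\overline{\Omega^\nu}\}\subalg$ and $\calV\vert_\Udown = \{\theta^i,\eta^r,\etabars{r},\sigma^u,\overline{\Omega^\nu}\}\subalg$. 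The goal is to show that under the normality hypothesis one may replace the abstract generators $\Omega^\nu$ by the concrete ones $d\eta^r$ and $\tfA^i$.

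The key step is to identify what the $\Omega^\nu$ must be once we know $\calV$ is Pfaffian. First I would observe that $d\eta^r \in \sysI\otimes\C$: this is immediate from \eqref{onedeta}, since $d\eta^r$ is a combination of $\sigma^u\wedge\sigma^v$ and $\sigma^u\wedge\eta^s$ terms, the latter lying in the ideal generated by $\eta^s$, and the former being a section of $\Lambda^2 V$ of the required type. Thus $d\eta^r$ and $\tfA^i$ are all legitimate 2-form generators of $\sysI\otimes\C$ lying in $\Lambda^2 V$, so they are candidates for the role of the $\Omega^\nu$. The substantive point is the converse: that the ideal generated by $\{\theta^i,\eta^r,\etabars{r}, d\eta^r,\overline{d\eta^r},\tfA^i,\overline{\tfA^i}\}$ exhausts $\sysI\otimes\C$. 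Here I would use normality. Normality says $\calV$ is Pfaffian, i.e. generated as a differential ideal by its 1-forms $V=\spanc\{\theta^i,\eta^r,\sigma^u\}$; hence the 2-forms of $\calV$ modulo the 1-form part are exactly the $d$-images of sections of $V$, namely (mod $I_1\otimes\C$ and the $\eta$-ideal) the $d\sigma^u$ and $d\eta^r$ together with whatever 2-forms of $\sysI$ survive. Combined with Corollary \ref{Abelong}, this forces the only independent $\Lambda^2 V$-generators contributing to $\sysI\otimes\C$ beyond the Frobenius piece $d\eta^r$ to be spanned by the $\tfA^i$.

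Concretely, I would argue as follows for \eqref{LocalI}. From \eqref{calI1a}, $\sysI\otimes\C$ is generated by $\theta^i,\eta^r,\etabars{r}$ together with 2-forms $\Omega^\nu,\overline{\Omega^\nu}$ that are sections of $\Lambda^2 V$ and $\Lambda^2\overline V$ respectively. Working modulo the ideal generated by $\{\theta^i,\eta^r,\etabars{r}\}$, the surviving parts of the $\Omega^\nu$ lie in $\Lambda^2\spanc\{\sigma^u\}$, i.e. are combinations of $\sigma^a\wedge\sigma^b = \pi^a\wedge\pi^b$ after the amalgamation. By Corollary \ref{Abelong} the classes $\tfA^i$ are themselves among these, and the normality of $\calV$ guarantees that no further independent $\Lambda^2 V$ 2-forms are needed: any such form, being in $\calV$, would have to arise as $d$ of a section of $V$, and the only such $d$-images that are not already in the $\{\theta^i,\eta^r\}$-ideal are the $d\eta^r$ (the $d\sigma^u$ vanish by \eqref{onedsigma}). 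Thus modulo $I_1\otimes\C$ the $\Lambda^2 V$-part of $\sysI\otimes\C$ is spanned precisely by $d\eta^r$ and $\tfA^i$; conjugating gives the $\Lambda^2\overline V$ part, establishing \eqref{LocalI}. Equation \eqref{LocalV} then follows by the same bookkeeping applied to $\calV\vert_\Udown$ in \eqref{calV1a}, replacing $\Omega^\nu$ by $d\eta^r,\tfA^i$ in the one conjugated slot and adjoining $\sigma^u$ and $\overline{d\eta^r}$ as in the earlier proposition, while noting $\overline{d\eta^r}$ is already recorded and $d\sigma^u=0$ keeps the ideal differentially closed.

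The main obstacle I anticipate is making rigorous the claim that normality forces the $\Lambda^2 V$-generators to be \emph{exactly} the span of $d\eta^r$ and $\tfA^i$, with nothing left over. The delicate point is disentangling, modulo the reduction by $I_1\otimes\C$ and the conjugate generators, which 2-forms are genuinely new generators of $\sysI\otimes\C$ versus which are consequences of the Pfaffian structure of $\calV$; in particular one must verify that the decomposition used in the proof of Corollary \ref{Abelong}—splitting $\Lambda^2(T^*M/I_1)\otimes\C$ into $\Lambda^2(V/I_1)$ and $\Lambda^2(\overline V/I_1)$ pieces—interacts correctly with the normality hypothesis so that the count of independent $\tfA^i$ matches the count of $\Omega^\nu$ surviving the reduction. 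I would handle this by a dimension/rank comparison in the quotient $\Lambda^2 V/(\text{ideal})$, using that $\calV$ Pfaffian means $d V\subset V\wedge T^*M + \calV_2$ with $\calV_2$ the 2-form part coming from $\sysI$, and tracking ranks via the nonsingularity of $P$.
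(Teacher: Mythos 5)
Your overall strategy is the paper's: one inclusion comes from Corollary \ref{Abelong} together with the fact that $\calV$ is a differential ideal containing $\sysI\otimes\C$, and the other comes from using normality to expand the abstract generators $\Omega^\nu$ of Definition \ref{EDdef} in terms of exterior derivatives of the $1$-form generators of $\calV$. But there is a concrete gap at the decisive step. You assert that ``the only such $d$-images that are not already in the $\{\theta^i,\eta^r\}$-ideal are the $d\eta^r$,'' omitting $d\theta^i$. The $\theta^i$ are sections of $\rV$, so $d\theta^i$ is one of the exterior derivatives that the Pfaffian hypothesis makes available, and it is precisely this term that produces the $\tfA^i$: modulo the $1$-forms $\theta^i,\eta^r,\etabars{r},\sigma^u$ of $\calV$, the structure equation \eqref{onedtheta} gives $d\theta^i\equiv P^i_j\,\overline{\tfA^j}$ (the $\tfA^i$ part dies because it is a combination of $\pi^a\wedge\pi^b$ with $\pi^a=(\eta^r,\sigma^u)$). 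The paper's proof writes $\overline{\Omega^\nu}\equiv S^\nu_i\,d\theta^i+T^\nu_r\,\overline{d\eta^{r}}\equiv S^\nu_i P^i_j\,\overline{\tfA^j}+T^\nu_r\,\overline{d\eta^{r}}$ modulo those $1$-forms, and this single congruence is what closes \eqref{LocalV} and, after conjugation, \eqref{LocalI}. Without the $d\theta^i$ term your expansion has no way to account for the $\overline{\tfA^j}$ at all --- the only remaining source of $\sigmabars{u}\wedge\sigmabars{v}$ terms would be $\overline{d\eta^r}$, which need not span them --- so the deferred ``dimension/rank comparison'' would expose a deficit rather than repair it.

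A second, related imprecision: you place the normality constraint on the $\Omega^\nu\in\Lambda^2\rV$. A section of $\Lambda^2\rV$ lies automatically in the algebraic ideal generated by sections of $\rV$, so its membership in $\calV$ is vacuous and the Pfaffian hypothesis says nothing about it directly. The generators on which normality genuinely bites are the conjugates $\overline{\Omega^\nu}\in\Lambda^2\rVb$: these are $2$-forms of $\sysI$, hence lie in $\calV$ by definition of the singular system, but are not a priori in the ideal generated by sections of $\rV$, and it is for them that the Pfaffian condition forces the expansion above. Once you run the argument on $\overline{\Omega^\nu}$, include the $d\theta^i$ term, and conjugate the resulting congruence to handle $\Omega^\nu$ in \eqref{LocalI}, your outline matches the paper's proof and closes correctly.
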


\begin{proof}  
Since $\overline{\tfA^i} \in \sysI \otimes \C$ by Corollary \ref{Abelong}, $\sysI \otimes \C \subset \calV$ and $\calV$ is a differential ideal, then 
$$\{\ \theta^i,\ \eta^r,\  \etabars{r},\ \sigma^u ,\overline{d\eta^{r}} , \ 
\overline{\tfA^i}\ \}\subalg \subset \calV.$$ 
On the other hand, the fact that $\calV$ is Pfaffian
and $\overline{\Omega^\nu}\in \Lambda^2 \rVb$  
together imply that there exist functions $S^\nu_i, T^\nu_r$ such that
\begin{alignat}{2}
\overline{\Omega^\nu}&\equiv S^\nu_i d\theta^i+T^\nu_r \overline{d\eta^{r}} & \mod \
&\theta^i, \eta^r, \etabars{r}, \sigma^u \notag \\
&\equiv S^\nu_i P^i_j \overline{\tfA^j}+T^\nu_r \overline{d\eta^{r}}& \mod \
&\theta^i , \eta^ r, \etabars{r}, \sigma^u\ .
\label{Om2}
\end{alignat}
This shows that $\calV$, as given by the expression \eqref{calV1a}, is included the right-hand side of \eqref{LocalV}.

Similarly, Corollary \ref{Abelong} implies that  $\{\ \theta^i,\ \eta^r,\ \etabars{r}, \ d \eta^r ,\ d \etabars{r} ,\ \tfA^i
,\ \overline{\tfA^i}\ \}\subalg \subset \sysI \otimes \C$.  Then the expression \eqref{calI1a}, together with \eqref{Om2} and its complex conjugate, imply the inclusion
in the opposite direction so that \eqref{LocalI} holds.
\end{proof}

We note that the following proposition implies in particular that, if the standard representation of a second-order PDE in the plane by an EDS on a 7-dimensional manifold 
\ifarch (see Example \ref{introE1} below) \else (see Example 2 in \cite{FelsIveyArchive}) \fi 
is Darboux integrable, then its singular system is Pfaffian and thus Theorem \ref{MTE} applies.

\begin{prop} \label{Normdef} If $(\sysI,\dD,\JJ)$ is a  Darboux integrable elliptic decomposable system which is Pfaffian then 
its singular system $\calV$ is Pfaffian.
\end{prop}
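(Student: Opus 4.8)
The plan is to show that the singular system $\calV$ coincides with the Pfaffian system $\{\rV, d\rV\}\subalg$ generated by the singular bundle $\rV$; by Definition \ref{DInorm} this is exactly normality. I would work throughout in a $1$-adapted coframe $(\theta^i,\eta^r,\sigma^u,\etabars{r},\sigmabars{u})$ on an open set $\Udown$ provided by Theorem \ref{oneadaptedexist}, so that $\rV\vert_\Udown = \spanc\{\theta^i,\eta^r,\sigma^u,\etabars{r}\}$ while $I_1\otimes\C\vert_\Udown=\spanc\{\theta^i,\eta^r,\etabars{r}\}$. Two facts drive everything: (i) $I_1\otimes\C = \rV\cap\rVb \subseteq \rV$, so the generating $1$-forms of $\sysI$ are already sections of $\rV$; and (ii) the complementary generators $\sigma^u$ of $\rV$ are closed, $d\sigma^u=0$, by structure equation \eqref{onedsigma}. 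Fact (ii) is where Darboux integrability enters, through the adapted coordinates $z^u$ furnishing the closed forms $\sigma^u=dz^u$.

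First I would establish $\{\rV,d\rV\}\subalg\subseteq\calV$. Every section of $\rV$ lies in $\calV$ by definition of the singular system, so it suffices to check $d\rV\subseteq\calV$. Writing an arbitrary section of $\rV$ as $v=\alpha+c_u\sigma^u$, with $\alpha$ a section of $I_1\otimes\C$ and $c_u$ functions, one computes $dv = d\alpha + dc_u\wedge\sigma^u$ using $d\sigma^u=0$. Here $d\alpha\in\sysI\otimes\C\subseteq\calV$, since $\sysI$ is a differential ideal and $\calV$ contains $\sysI\otimes\C$, while $dc_u\wedge\sigma^u$ is a multiple of the $\rV$-section $\sigma^u$ and hence lies in $\calV$. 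Thus $dv\in\calV$.

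Next I would establish the reverse inclusion $\calV\subseteq\{\rV,d\rV\}\subalg$. Since $\calV$ is algebraically generated by sections of $\rV$ together with the $2$-forms of $\sysI$, and sections of $\rV$ visibly lie in $\{\rV,d\rV\}\subalg$, it remains only to place the $2$-form generators of $\sysI$ inside $\{\rV,d\rV\}\subalg$. Because $\sysI$ is Pfaffian, $\sysI\otimes\C = \{I_1\otimes\C,\ d(I_1\otimes\C)\}\subalg$, so its degree-two part is contained in $(I_1\otimes\C)\wedge(T^*M\otimes\C)$ together with function-linear combinations of $d\theta^i,d\eta^r,d\etabars{r}$. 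By (i) the first piece lies in $\rV\wedge(T^*M\otimes\C)\subseteq\{\rV,d\rV\}\subalg$, and since the generators $\theta^i,\eta^r,\etabars{r}$ of $I_1\otimes\C$ are sections of $\rV$, their exterior derivatives lie in $d\rV\subseteq\{\rV,d\rV\}\subalg$. Hence every $2$-form of $\sysI$ lies in $\{\rV,d\rV\}\subalg$.

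Combining the two inclusions yields $\calV=\{\rV,d\rV\}\subalg$, which is Pfaffian with $1$-form bundle $\rV$, so $\sysI$ is normal. The single load-bearing input is $d\sigma^u=0$: it is exactly what prevents the exterior derivative of a general section of $\rV$ from producing $2$-forms outside $\calV$, and it is precisely the consequence of Darboux integrability (the $\sigma^u$ may be taken to be differentials of holomorphic Darboux invariants). The rest is bookkeeping about the degree-two part of a Pfaffian ideal, which I expect to be routine; the main care required is to carry out each containment modulo the correct pieces of the coframe rather than to compute explicit structure coefficients.
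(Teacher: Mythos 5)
Your argument is correct and is essentially the paper's proof written out in full: the paper likewise works in a $1$-adapted coframe, uses the Pfaffian hypothesis to express the $2$-form generators $\overline{\Omega^\nu}$ of $\calV$ as combinations of $d\theta^i, d\eta^r, d\overline{\eta^r}$ modulo $\theta^i,\eta^r,\overline{\eta^r}$, and concludes from \eqref{calV1a} that $\calV$ is the Pfaffian system generated by $\rV$ (with $d\sigma^u=0$ and the fact that $\calV$ is a differential ideal handling the reverse inclusion, exactly as in your first step). Your version merely makes both containments explicit, which is a fair expansion of the paper's one-line argument.
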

\begin{proof} In terms of a 1-adapted coframe provided by Theorem \ref{oneadaptedexist},
the condition that $\calI$ is Pfaffian implies by equation \eqref{calI1a} that
$$ 
{\overline \Omega^\nu}= R^\nu_i d \theta^i+S^\nu_rd{\eta^r}\ +T^\nu_r  \ d \overline{\eta^r}\mod \
\theta^i, \eta^r, \overline{\eta^r} \, .
$$
This in turn shows $\calV$ is Pfaffian by equation \eqref{calV1a}.
\end{proof}


\subsection{Maximal Darboux Integrability} \label{SmaxDI}
\newcommand{\sysEM}{{\mathcal I}}
\newcommand{\WM}{{\rV}}
\newcommand{\EM}{{I}}
\newcommand{\DeltaM}{{\dD}}

\newcommand{\MDeltaplus}{{\dD_+}}
\newcommand{\MDeltaminus}{{\dD_-}}
\newcommand{\MDeltaplusinf}{{\dD_+^{(\infty)}}}
\newcommand{\MDeltaminusinf}{{\dD_-^{(\infty)}}}

\ifjour
Maximally Darboux integrable systems play a pivotal role in the structure theory of elliptic  Darboux integrable systems. 
In particular, the next proposition shows that the system $\sysE$ in Theorem \ref{MTE} belongs to a generic class of maximally Darboux integrable systems.  
\else
Maximally Darboux integrable systems play a pivotal role in the structure theory of elliptic  Darboux integrable systems, as highlighted in Theorem \ref{FTDI}.
\fi
In particular, the next proposition shows that the system $\sysE$ in Theorem \ref{MTE} belongs to a generic class of maximally Darboux integrable systems.  
Furthermore, in Theorem \ref{C34} we will show that quotients of these maximally integrable systems give rise to Darboux integrable systems whose singular systems are Pfaffian. 
(The converse is given by Theorems \ref{extendexist}, \ref{loopreduction} and Corollary \ref{CircleD}.)
Examples of maximally Darboux integrable system used in the quotient construction are given in \S\ref{SecondEx}. 

Let $(\sysEM, \DeltaM, \JJ)$ be a Darboux integrable elliptic decomposable system with singular bundle $\WM$ on a manifold $N$.  Suppose that $\sysI$ is maximally Darboux integrable, so that $\dim N= 2\, q$ 
where $q=\rank \WM^{(\infty)}$.  Thus,  $\rank  \EM = \dim N - 2d = 2(q-d)$ and the containment in \eqref{containment} becomes 
\begin{equation}
\EM\otimes \C =
 ( \overline{\WM}\cap \WM^{(\infty)} )  \oplus (\WM \cap  \overline{\WM^{(\infty)}}) \ . 
\label{maxDI}
\end{equation}
This decomposition arises naturally if $N$ is a complex manifold and $\sysI$ is 
the \realform\ of a holomorphic Pfaffian system (see Remark \ref{MaxR} below).  Such systems are
generated by sections of a holomorphic vector bundle $\rH \subset T^*_{1,0} N$.
Since $\rH \cap \overline{\rH} = 0$, then $I \otimes \C = \rH \oplus \overline{\rH}$.
(For example, if $\{\theta^j\}$ is a local basis of holomorphic sections of $\rH$ then $\theta^j +\overline{\theta^j}$ and $\ri (\theta^j - \overline{\theta^j})$ are pointwise linearly independent local sections of $\EM$.) 
By way of an {\it abus de langage} we say that $I$ is the {\defem \realform\ } of $H$, and write $I=H_{\R}$.

Since $I\otimes \C = \rH \oplus \overline{\rH}$, any integral manifold of $\sysI$ is an integral manifold of 
the 1-forms in $\rH$.  Conversely, since $\EM$ is spanned by real and imaginary parts of 
sections of $\rH$, then any integral manifold of $\rH$ is an integral manifold of $\sysI$. Note that
the $\JJ$-invariant integral manifolds of $\sysI$ with induced complex structure ${\JJSig}$ are holomorphic integral manifolds of $\sysH$ and conversely.

The following result, which is the elliptic analogue of Theorem 3.1 in \cite{AFV}, shows that 
$I=H_{\R}$ is maximally Darboux integrable. 

\newcommand{\maxdimN}{{p}}
\begin{prop}\label{HDI}  Let $N$ be a complex manifold of complex dimension $\maxdimN$ with complex structure $\JJ$, and let $\rH \subset T^*_{1,0}N$ be a rank $\rankH$ holomorphic Pfaffian system with  $\rH^{(\infty)}= 0$.  Let $\EM=H_{\R}$ be the \realform\ of $H$ and let $\sysEM$ be the corresponding Pfaffian system generated by $\EM$.  Then:
\begin{enumerate}[label=\thmenumstyle,ref=\thmenumstyle]

\item\label{maxhED} The distribution $\DeltaM=\EM^\perp$ is $\JJ$-invariant, $(\DeltaM, \JJ)$ is an elliptic distribution, where
$\DeltaM \otimes \C = \MDeltaplus \oplus \MDeltaminus$ and
\begin{subequations}\label{DefDD}
\begin{alignat}{2}
\MDeltaplus &= T_{1,0}N  \cap \rH^\perp, \qquad & \MDeltaminus &= T_{0,1}N \cap \overline{\rH}^\perp, \label{DefDD1} \\
\MDeltaplusinf &=T_{1,0}N, &
\MDeltaminusinf &= T_{0,1}N. \label{DefDD2}
\end{alignat}
\end{subequations}

\item\label{maxhspliv} 
The singular bundle $ \WM= (\MDeltaminus)^\perp$ satisfies
\begin{equation}
\begin{aligned}
\WM= (\MDeltaminus)^\perp &=& &  T^*_{1,0}N\oplus \overline{\rH}, \qquad  \WM^{(\infty)}  = T^*_{1,0}N. 
\end{aligned}
\label{DefW}
\end{equation}

\item\label{maxhmax} The system  $(\sysEM, \DeltaM, \JJ)$ is a maximally Darboux integrable elliptic decomposable system whose singular system $\calV$ is Pfaffian.
\end{enumerate}
\end{prop}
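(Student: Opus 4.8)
The plan is to establish the three parts by reducing each to the two defining features of the setup: that $N$ is a genuine complex manifold (so $T_{1,0}N$ and $T_{0,1}N$ are bracket-closed and the bidegree decomposition of forms is available) and that $\rH$ is \emph{holomorphic} with $\rH^{(\infty)}=0$. First I record the linear algebra of the splitting. From $\EM\otimes\C=\rH\oplus\overline{\rH}$ we get $\DeltaM\otimes\C=(\rH\oplus\overline{\rH})^\perp=\rH^\perp\cap\overline{\rH}^\perp$; decomposing each annihilator by type through $TN\otimes\C=T_{1,0}N\oplus T_{0,1}N$ gives $\rH^\perp=T_{0,1}N\oplus(T_{1,0}N\cap\rH^\perp)$ and $\overline{\rH}^\perp=T_{1,0}N\oplus(T_{0,1}N\cap\overline{\rH}^\perp)$, whose intersection is exactly $\MDeltaplus\oplus\MDeltaminus$ as in \eqref{DefDD1}, with $\MDeltaminus=\overline{\MDeltaplus}$. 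Since $\MDeltaplus\subset T_{1,0}N$ and $\MDeltaminus\subset T_{0,1}N$ are the $\pm\ri$-eigenspaces of the ambient complex structure, $\DeltaM$ is $\JJ$-invariant and $\JJ|_{\DeltaM}$ is the sub-complex structure determined by this splitting (Remark~\ref{JfromDplus}); the rank count $\rk_\R\DeltaM=2(\maxdimN-\rankH)$ is even and (for $\rankH\ge 1$) less than $\dim N$, so conditions~\ref{Ddefeven} and~\ref{DdefJ} of Definition~\ref{ED} hold.

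Next I verify the bracket condition~\eqref{DDB}. For sections $X$ of $\MDeltaplus$, $Y$ of $\MDeltaminus$ and a local holomorphic generator $\theta$ of $\rH$, the identity $\theta([X,Y])=X(\theta(Y))-Y(\theta(X))-d\theta(X,Y)$ collapses termwise: $\theta(Y)=0$ because $\theta$ is of type $(1,0)$ and $Y$ of type $(0,1)$; $\theta(X)=0$ because $X\in\rH^\perp$; and $d\theta=\partial\theta$ is a $(2,0)$-form, so it annihilates the mixed pair $(X,Y)$. The same holds for local generators of $\overline{\rH}$, so $[X,Y]\in(\rH\oplus\overline{\rH})^\perp=\DeltaM\otimes\C$, which is~\ref{Ddefeigens}. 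Dualizing the splitting identifies the singular bundle: because $\MDeltaminus\subset T_{0,1}N$ forces $T^*_{1,0}N\subset\WM$, while the forms in $T^*_{0,1}N$ annihilating $\MDeltaminus=\overline{\rH}^\perp\cap T_{0,1}N$ are exactly $\overline{\rH}$ (double annihilator inside $T^*_{0,1}N$), we obtain $\WM=(\MDeltaminus)^\perp=T^*_{1,0}N\oplus\overline{\rH}$, the first half of~\eqref{DefW}.

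The technical heart of the proof, and the step I expect to demand the most care, is the computation of the derived flags. I claim $\WM^{(k)}=T^*_{1,0}N\oplus\overline{\rH^{(k)}}$, proved by induction using the bidegree decomposition. The forms $dz^j$ spanning $T^*_{1,0}N$ are closed, hence lie in $\WM^{(\infty)}$, so $T^*_{1,0}N\subset\WM^{(k)}$ for all $k$ and $\WM^{(k)}$ is recovered from its image in $\WM/T^*_{1,0}N\cong\overline{\rH}$. Writing a section of $\WM^{(k)}$ as $\alpha+\overline\beta$ with $\alpha$ a $(1,0)$-form and $\beta$ a section of $\rH^{(k)}$, one tests $d(\alpha+\overline\beta)\equiv 0\bmod\WM^{(k)}$ by bidegree: the $(2,0)$ component lies in $\Lambda^2 T^*_{1,0}N\subset\Lambda^2\WM^{(k)}$ and the $(1,1)$ component lies in $T^*_{1,0}N\wedge T^*_{0,1}N\subset\WM^{(k)}\wedge(T^*N\otimes\C)$, so both drop out automatically; the condition thus reduces to the $(0,2)$ requirement $\overline{\partial\beta}\in\overline{\rH^{(k)}}\wedge T^*_{0,1}N$, i.e.\ (conjugating) $\partial\beta\equiv 0\bmod\rH^{(k)}$, a pointwise-linear condition cutting out $\beta\in\rH^{(k+1)}$. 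Hence $(\WM^{(k)})'=T^*_{1,0}N\oplus\overline{\rH^{(k+1)}}$, and iterating with $\rH^{(\infty)}=0$ gives $\WM^{(\infty)}=T^*_{1,0}N$, completing~\eqref{DefW}. Dualizing through the standard correspondence $(\MDeltaminus)^{(k)}=(\WM^{(k)})^\perp$ yields $\MDeltaminusinf=(T^*_{1,0}N)^\perp=T_{0,1}N$, and by conjugation $\MDeltaplusinf=T_{1,0}N$, establishing~\eqref{DefDD2}. Monotonicity of the derived flag under inclusion of distributions then gives $\DeltaM^{(\infty)}\otimes\C\supseteq\MDeltaplusinf+\MDeltaminusinf=TN\otimes\C$ (cf.\ also Lemma~\ref{CDER}, whose identity uses only~\eqref{DDB}), so $\DeltaM$ is bracket-generating and $(\DeltaM,\JJ)$ is an elliptic distribution, finishing~\ref{maxhED} and~\ref{maxhspliv}.

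Finally I assemble~\ref{maxhmax}. Darboux integrability~\eqref{VDIcond} is immediate, since $\WM^{(\infty)}+\overline{\WM}=T^*_{1,0}N+(T^*_{0,1}N\oplus\rH)=T^*N\otimes\C$. Decomposability in the sense of Definition~\ref{EDdef} holds with the local generator $2$-forms $\Omega^j:=d\theta^j=\partial\theta^j\in\Lambda^2 T^*_{1,0}N\subset\Lambda^2\WM$: indeed $\sysEM\otimes\C$ is generated algebraically by the $1$-forms $\theta^j,\overline{\theta^j}$ together with $d\theta^j=\Omega^j$ and $d\overline{\theta^j}=\overline{\Omega^j}$. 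Maximality follows because $\rk_\C\WM^{(\infty)}=\rk_\C T^*_{1,0}N=\maxdimN=\tfrac12\dim N$ attains the top of the range in~\eqref{rankqinfo}; and normality is immediate from Proposition~\ref{Normdef}, as $\sysEM$ is Pfaffian by construction. The only genuine subtlety in the whole argument is the bidegree bookkeeping in the derived-flag induction of the third paragraph; the remaining verifications are linear algebra or direct appeals to the integrability of $\JJ$ and the holomorphy of $\rH$.
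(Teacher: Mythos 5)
Your proof is correct, and it follows the same overall skeleton as the paper's: verify the conditions of Definition \ref{ED}, identify $\WM$ by taking annihilators, exhibit the generator $2$-forms $d\theta^i\in\Lambda^2\WM$, and read off Darboux integrability, maximality and normality. You diverge in two sub-steps. For the bracket condition \eqref{DDB}, the paper picks a local \emph{holomorphic} frame $\{X_a\}$ of $\Dplus$, so that $[X_a,\overline{X_b}]=0$, and expands arbitrary sections in that frame; you instead evaluate holomorphic generators of $\rH$ and their conjugates on $[X,Y]$ via the Cartan formula and kill each term by bidegree — same underlying input (holomorphy of $\rH$, here in the guise of $d\theta=\partial\theta$), but no need to produce holomorphic frames of $\Dplus$. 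For the terminal derived systems, the paper works on the tangent side, using $(H^{(\infty)})^\perp=(H^{\perp})^{(\infty)}$ and the splitting $(\Dplus\oplus T_{0,1}N)^{(\infty)}=\Dplusinf\oplus T_{0,1}N$ to get \eqref{DefDD2} first and then deduces \eqref{DefW} by annihilation; you run the computation in the opposite direction, proving $\WM^{(k)}=T^*_{1,0}N\oplus\overline{\rH^{(k)}}$ by a bidegree induction on the cotangent side and then dualizing to obtain \eqref{DefDD2}. Your induction is heavier, but it makes explicit the splitting of the derived flag that the paper asserts without comment, and it produces the entire flag of $\WM$ rather than only its terminal member. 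Everything else — the rank counts, the $\JJ$-invariance, the identification $\WM=T^*_{1,0}N\oplus\overline{\rH}$, and the Darboux-integrability, maximality and normality checks — matches the paper's argument.
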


\ifjour
\begin{proof} It is straightforward to check that if $\EM = \rH_\R$ and $\DeltaM = \EM^\perp$
then $\DeltaM \otimes \C$ equals $\MDeltaplus \oplus \MDeltaminus$ as defined in \eqref{DefDD1} and the corresponding subcomplex structure $\JJ$ 
satisfies conditions \ref{Ddefeven}--\ref{DdefJ} in Definition \ref{ED}.  In particular, 
since  $H^{(\infty)}=0$ then
$$ 
TN \otimes \C=(H^{(\infty)})^\perp=(H^{\perp})^{(\infty)}=  (\MDeltaplus \oplus T_{0,1}N)^{(\infty)}= \MDeltaplusinf \oplus T_{0,1}N, 
$$
so that \eqref{DefDD2} holds and $\DeltaM$ is bracket-generating.  Moreover, 
we may choose a local basis $\{ \vX_a\} $ of sections of $H$ which are holomorphic vector fields; hence $[\vX_a, \overline{\vX_b}] =0$, showing that condition \ref{Ddefeigens} in Defn. \ref{ED} is satisfied.

Equation \eqref{DefW} follows by taking the annihilator of \eqref{DefDD1}, and in turn implies
that $\WM^{(\infty)} + \overline{\WM}  = T^*N\otimes \C$, so that $(\sysEM, \DeltaM,\JJ)$ is maximally Darboux integrable.  The fact that the singular system is Pfaffian follows from Proposition \ref{Normdef}.
\end{proof}
\else
\begin{proof}  {\bf Part \ref{maxhED}}: We check the conditions of Definition \ref{ED}.  First note that $\EM = \rH_\R$ has real rank $2\rankH$ (as shown above), so $\DeltaM= \EM^\perp$ has rank $2(\maxdimN-\rankH)$ and Definition \ref{ED}.\ref{Ddefeven} is satisfied. Moreover,
$$
\DeltaM \otimes \C = (H \oplus \overline{H})^\perp = H^\perp \cap \overline{H} ^\perp = (\MDeltaplus \oplus T_{0,1}N)\cap (  T_{1,0}N \oplus \MDeltaminus)
= \MDeltaplus\oplus \MDeltaminus
$$
with $\MDeltaplus$ and $\MDeltaminus$ being given by equation \eqref{DefDD1}. 

We next check Definition \ref{ED}.\ref{DdefJ} by showing that $\DeltaM$ is $\JJ$-closed. 
Fix a point $\npt\in N$ and let $\{ X_a\}$ be a local basis of sections of $\MDeltaplus$ near $\npt$, so that $\{ \overline{X_a}\}$ is a  basis of sections of $\MDeltaminus$. The real vector fields $T_a=X_a + \overline{X_a}$, $S_a= \ri(X_a - \overline{X_a})$ then 
give a local basis $\{S_a, T_a\}$ of sections for $\DeltaM$ near $\npt$. Since $X_a, \overline{X_a}$ take value in $T_{1,0}N$ and $T_{0,1}N$ respectively, then the $\C$-linear extension of $\JJ$ to $TN\otimes \C$ satisfies $\JJ(X_a) =  \ri X_a$ and $\JJ(\overline{X_a}) = -\ri \overline{X_a}$. Therefore $\JJ(T_a) = S_a$, 
$\JJ(S_a)=-T_a$, so that $\DeltaM$ is invariant under $\JJ$.   

We now verify condition Definition \ref{ED}.\ref{Ddefeigens}. By construction, $\MDeltaplus$ and $\MDeltaminus$ are precisely the $+\ri$ and $-\ri$ eigenspaces for $\JJ$ on $\DeltaM\otimes \C$, so it remains to check equation \eqref{DDB}. Since $H$ is holomorphic, we may assume that the local basis $\{ \vX_a\} $ of sections of $\MDeltaplus\subset T_{1,0}N$ consists of holomorphic vector fields, and hence $[\vX_a, \overline{\vX_b}] =0$.
If $\vX^+, \vY^-$ are arbitrary locally defined sections of $\MDeltaplus$ and $\MDeltaminus$ respectively, then 
$$
X^+= f^a X_a, \qquad Y^- = g^a \overline{ X_a}
$$
for locally defined functions $f^a, g^a$, and 
$$
[ X^+, Y^-]=
[f^a X_a, g^a \overline{X_b}] =X^+(g^a)  \overline{X_a} - Y^{-}(f^a) X_a  \in \MDeltaplus \oplus \MDeltaminus = \DeltaM \otimes \C.
$$
This shows that equation \eqref{DDB} holds and hence Definition \ref{ED}.\ref{Ddefeigens} is satisfied.
 
Finally we check Definition \ref{ED}.\ref{Ddefchow} along with verifying equations \eqref{DefDD2}. Since  $H^{(\infty)}=0$,
$$ 
TN \otimes \C=(H^{(\infty)})^\perp=(H^{\perp})^{(\infty)}=  (\MDeltaplus \oplus T_{0,1}N)^{(\infty)}= \MDeltaplusinf \oplus T_{0,1}N, 
$$
so that $\MDeltaplusinf = T_{1,0}N$; a similar computation finishes the proof of \eqref{DefDD2}.  Then by Lemma \ref{CDER},
$$
\DeltaM^{(\infty)} \otimes \C= \MDeltaplusinf+\MDeltaminusinf = TN\otimes \C
$$
and $\DeltaM^{(\infty)}= TN$. This shows  Definition \ref{ED}.\ref{Ddefchow} is satisfied, so that  $(\DeltaM, \JJ)$ is an elliptic decomposable distribution with decomposition given by equation \eqref{DefDD1}.

\smallskip
\noindent
{\bf Part \ref{maxhspliv}}: Equations \eqref{DefW} follow from the definition of $\MDeltaminus$ in \eqref{DefDD1} by taking the annihilator. 

\smallskip
\noindent
{\bf Part \ref{maxhmax} }: We  first show that $(\sysEM, \DeltaM, \JJ)$ is elliptic and decomposable (Definition \ref{EDdef}). 
From part \ref{maxhED} and the fact that $\sysEM$ is Pfaffian, Definition \ref{EDdef}.\ref{EDdefsys} and \ref{EDdefJ} are satisfied.
We now check the condition in Definition \ref{EDdef}.\ref{EDdefV}.  Choose a local basis $\{\theta^i\}$ of holomorphic sections of $H$,
extended so that  $\{ \theta^i, \pi^a\}$ is a local basis of holomorphic sections of $T^*_{1,0}N$.  Then we have
$$
d \theta^i = C^i_{jk} \theta^j \wedge \theta^k + A^i_{ab} \pi^a \wedge \pi^b +  M^i_{ja} \theta^i \wedge \pi^a
$$
for some locally defined holomorphic functions $  C^i_{jk}, A^i_{ab}, M^i_{ja} $ on $N$.
By equation \eqref{DefW} $\pi^a$ and $\theta^i$  are local sections of the singular system $\WM$,
and so $d\theta^i$ is a local section of $\Lambda^2 \WM$. Since $\EM\otimes \C=H\oplus \overline{H}$, a similar computation with $\overline {\theta^i}$ shows that the Pfaffian system
$\sysEM \otimes \C $ is generated (locally) by $\{\theta^i, d \theta^i, \overline {\theta^i} , d \overline{\theta^i} \}\subalg$, and hence Definition \ref{EDdef}.\ref{EDdefV} holds. 

We now check that is $\sysEM$ is maximally Darboux integrable. From equation \eqref{DefW}, 
$$
\WM^{(\infty)} + \overline{\WM}  =T^*_{1,0} N +   H\oplus T^*_{0,1} N= T^*N\otimes \C,
$$
so equation \eqref{VDIcond} is satisfied and  $(\sysEM, \DeltaM,\JJ)$ is Darboux integrable. From equation \eqref{DefW}, 
$\dim_{\R}N = 2\, \rank(\WM^{(\infty)}) $, so that $(\sysEM, \DeltaM,\JJ)$ is maximally Darboux integrable; moreover equation \eqref{maxDI} holds since $\WM^\infty \cap  \overline{\WM}=H$. Finally the singular system for  $(\sysEM, \DeltaM,\JJ)$ is Pfaffian by Proposition \ref{Normdef}.
\end{proof}
\fi

\begin{remark}\label{MaxR}
Equation \eqref{DefW} shows that the Darboux invariants for $(\sysI,\dD,\JJ)$ in this case (Definition \ref{DarbouxInv}) consist of the holomorphic functions on $N$ and that  equation \eqref{maxDI} holds where $ \overline{\WM}\cap \WM^{(\infty)}= H$. On account of the singular system $\calV$ being Pfaffian, equation \eqref{DefW} shows that $\calV=\{\Omega^{1,0}(M) \oplus \overline{\sysH}\}\subalg$.
\end{remark}



%

We also have a converse to Proposition \ref{HDI}.

\begin{prop} \label{MaxPfaff} If an elliptic decomposable Pfaffian system $(\sysEM, \dD,\JJ)$ on $N$ is maximally Darboux integrable then 
$\JJ$ extends to give an integrable complex structure on $N$ (so that
$N$ is a complex manifold), and there exists a holomorphic Pfaffian system $H$
such that $\sysEM$ is the \realform\ of $H$.
\end{prop}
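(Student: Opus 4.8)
The plan is to manufacture the complex structure on $N$ directly from the terminal derived system $\Vinf$ of the singular bundle, and then to read off $H$ from the maximal decomposition \eqref{maxDI}. First I would use maximality to record that $n=\dim N-2q=0$, so that the adapted coordinates of Lemma~\ref{Vinfdz} reduce to $(x^a,y^a)$ with $\Vinf=\spanc\{dz^a\}$, where $z^a=x^a+\ri y^a$ and $1\le a\le q$. Because the $dz^a$ are closed and pointwise independent from the $d\overline{z^a}$, the bundle $\Vinf$ is Frobenius-integrable, $\Vinf\cap\overline{\Vinf}=0$, and $\Vinf\oplus\overline{\Vinf}=T^*N\otimes\C$. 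This data determines an almost complex structure $\widehat{\JJ}$ on $N$ with $T^*_{1,0}N=\Vinf$, and $\widehat{\JJ}$ is integrable, either because the $z^a$ are holomorphic coordinates for it or, intrinsically, because $T_{0,1}N=\Dminusinf=(\Vinf)^\perp$ is bracket-closed as the annihilator of the terminal derived system. Thus $N$ becomes a complex manifold.

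Next I would verify that $\widehat{\JJ}$ extends $\JJ$, that is $\widehat{\JJ}\restr_\dD=\JJ$. Dualizing $T^*_{1,0}N=\Vinf$ and using $\Dminusinf=(\Vinf)^\perp$ together with conjugation gives $T_{0,1}N=\Dminusinf$ and $T_{1,0}N=(\overline{\Vinf})^\perp=\Dplusinf$. Since $\Dplus\subseteq\Dplusinf$, it suffices to prove $\dD\cap\Dplusinf=\Dplus$. Writing any $v\in\dD\cap\Dplusinf$ as $v=v_++v_-$ with $v_\pm$ sections of $\Dplus,\Dminus$ (using $\dD\otimes\C=\Dplus\oplus\Dminus$), the component $v_-=v-v_+$ lies in $\Dplusinf\cap\Dminus$, which vanishes by the Darboux integrability condition \eqref{DDIcond}. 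Hence $v=v_+\in\Dplus$, so $\widehat{\JJ}\restr_\dD=\JJ$ and $\widehat{\JJ}$ is the required extension.

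Finally I would set $H:=\rVb\cap\Vinf\subseteq\Vinf=T^*_{1,0}N$. Conjugation carries $H$ to $\rV\cap\overline{\Vinf}$, so the maximal decomposition \eqref{maxDI} reads $I\otimes\C=H\oplus\overline{H}$; this is precisely the identity $I=H_\R$. To see that $H$ is a holomorphic subbundle I would pass to a $1$-adapted coframe from Theorem~\ref{oneadaptedexist} (again with $n=0$), in which $H=\spanc\{\eta^r\}$ and, by \eqref{zucoords}, $\eta^r=dz^{r+d}+R^r_u\,dz^u$ with each $R^r_u$ a Darboux invariant. Since a Darboux invariant has differential in $\Vinf=T^*_{1,0}N$, it is annihilated by $\overline{\Vinf}$ and hence holomorphic (cf.\ Remark~\ref{MaxR}); therefore each $\eta^r$ has holomorphic coefficients and is a holomorphic $(1,0)$-form. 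Thus $H$ admits local holomorphic frames and is a holomorphic Pfaffian system, and since $\sysEM$ is Pfaffian with $I=H_\R$, the system $\sysEM$ is generated by the real and imaginary parts of sections of $H$, i.e.\ $\sysEM$ is the realification of $H$.

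I expect the last step to be the crux: establishing that $H$ is genuinely holomorphic rather than merely a complex subbundle of $T^*_{1,0}N$. This is where the explicit normal form of Theorem~\ref{oneadaptedexist} and the identification of Darboux invariants with holomorphic functions on $N$ are essential, and it is also the only place the Pfaffian hypothesis enters (to promote $I=H_\R$ to a statement about the full system $\sysEM$). By contrast, the construction of $\widehat{\JJ}$ and the verification that it restricts to $\JJ$ are essentially formal once maximality collapses the $t^i$-coordinates.
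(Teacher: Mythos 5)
Your proposal is correct and takes essentially the same route as the paper's proof: maximality gives the splitting $T^*N\otimes\C=\Vinf\oplus\overline{\Vinf}$ defining the integrable complex structure (the paper cites Kobayashi--Nomizu for integrability where you invoke the adapted coordinates of Lemma~\ref{Vinfdz} directly), and $H=\rVb\cap\Vinf$ is shown to be holomorphic via the 1-adapted coframe. The only differences are expository --- you spell out the verification that $\widehat{\JJ}$ restricts to $\JJ$ on $\dD$ and read off holomorphy of the $\eta^r$ from the coordinate expression \eqref{zucoords} rather than from the structure equation \eqref{onedeta}.
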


\begin{proof} Let $\rV$ be the singular bundle of $(\dD,\JJ)$.  Since $\dim N =2 \rank \Vinf$ then $T^*N \otimes \C = \Vinf \oplus \overline{\Vinf}$.
This splitting defines an almost complex structure $\JJ$ on $N$ 
such that $(\Vinf)^\perp$ is the $-\ri$-eigenspace of $\JJ$, and which is an extension of that on $\dD$.
Because $\Vinf$ is a Frobenius system the almost complex structure is integrable (see, e.g., Theorem 2.8 in Chapter IX of \cite{KN}).  We set
$$H = \overline{\rV} \cap \Vinf.$$
Then $H \cap \overline{H}=0$ and by equation \eqref{maxDI}, $I\otimes \C= H\oplus \overline{H}$.
In terms of the adapted coframe provided by Theorem \ref{oneadaptedexist}, $H$ is locally spanned by
the 1-forms $\eta^r$.  It follows that from \eqref{onedeta} that these are holomorphic 1-forms.
\end{proof}

\subsection{Examples}\label{basicexamplesec}

The examples below demonstrate the basic Definitions \ref{ED}, \ref{EDdef} and \ref{EDIdef} 
for elliptic distributions, elliptic decomposability and Darboux integrability respectively,
focusing on second-order scalar PDE in the plane 
and first-order systems of PDE in two dependent and two independent variables.

\begin{example}[Laplace's equation]\label{ex:Laplace}
Let $x^i$, $u$, $u_i$, $u_{ij}$ be standard coordinates on the 8-dimensional jet space $J^2(\R^2, \R)$.
(In terms of these coordinates, $u$ is a function of independent variables $x^1, x^2$, and
$u_i$, $u_{ij}=u_{ji}$ are its first- and second-order partial derivatives.)
In these coordinates Laplace's equation is $u_{11}+u_{22}=0$. If $M \subset J^2(\R^2, \R)$ is 
the 7-dimensional submanifold defined by this equation, then a Pfaffian EDS $\sysI$ encoding Laplace's equation is generated by the pullbacks to $M$ of the standard contact forms:
\begin{equation}\label{generateplanePDE}
\sysI=\{ \vartheta_0:=\iota^*(du-u_1 dx^1 - u_2 dx^2),\  
\vartheta_1:=\iota^*(du_1- u_{11}dx^1 -u_{12} dx^2), \ 
\vartheta_2:=\iota^*(du_2- u_{12}dx^2 -u_{22} dx^2\}\subdiff,
\end{equation}
where $\iota$ is the inclusion $\iota:M \hookrightarrow J^2(\R^2, \R)$.
We will use the restrictions of $x^1,x^2,u,u_1,u_2, u_{11}, u_{12}$ as coordinates on $M$.
In terms of the coordinate vector fields, the distribution annihilated by the 1-forms of $\sysI$ is
$$\dD = \{ D_1 := \di_{x^1} + u_1 \di_u + u_{11} \di_{u_1} + u_{12} \di_{u_2}, D_2 := \di_{x^2}+u_2 \di_u + u_{12} \di_{u_1} - u_{11} \di_{u_2}, \di_{u_{11}}, \di_{u_{12}})\}.$$
The derived flag of this distribution satisfies
$$\dD^{(1)} = \{ D_1, D_2, \di_{u_1}, \di_{u_2}, \di_{u_{11}}, \di_{u_{12}}\}, \quad \dD^{(2)} = TM,$$
so $\dD$ is bracket-generating.

We will define a sub-complex structure on $\dD$ which makes $\sysI$ an elliptic system.  To do this (and illustrate Prop. \ref{ED}) we identify
two subspaces of $\dD \otimes \C$, each spanned by singular 1-dimensional integral elements.
Such singular integral elements are determined by the generator 2-forms of the EDS, which in this case are
$$I_2 = \{ du_{11} \wedge dx^1 + du_{12} \wedge dx^2, du_{12} \wedge dx^1 - du_{11} \wedge dx^2 \} \mod I_1.$$
Let $X=\xi^1 D_1 + \xi^2 D_2 + \xi^3 \di_{u_{11}} + \xi^4 \di_{u_{12}}$ be a nonzero vector in $\dD$.
The polar equations of $X$ are computed by taking the interior product of $X$ with the generator 2-forms.  The resulting 1-forms fail to be linearly independent if and only if 
\begin{equation}
0=(\xi^1)^2+(\xi^2)^2 = (\xi^3)^2+(\xi^4)^2 = \xi^1 \xi^3-\xi^2 \xi^4 = \xi^1\xi^4 +\xi^2 \xi^3.
\label{Lapxi}
\end{equation}
In particular, the 1-forms are linearly independent for all nonzero real $X$, and thus there are no real singular integral 1-planes.   When we allow the coefficients $\xi^a$ in equation \eqref{Lapxi} to be complex, the complex singular integral 1-planes belong to one of two subspaces of $\dD\otimes \C$,
$$\Dplus = \{ D_1 - \ri D_2, \di_{u_{11}} + \ri \di_{u_{12}} \}, \quad \Dminus = \overline{\Dplus} ,$$
which have the property that $\dD \otimes \C = \Dplus \oplus \Dminus$.

Matters being so, we define a sub-complex structure $\JJ$ on $\dD$ so that 
$\Dplus$ and $\Dminus$ are respectively the $+\ri$ and $-\ri$ eigenspaces for the extension of $\JJ$ to $\dD\otimes \C$ (see also Remark \ref{JfromDplus}).  It follows that 
$$\JJ: D_1 \mapsto D_2, \quad \JJ: D_2 \mapsto -D_1, \quad \JJ: \di_{u_{11}}\mapsto -\di_{u_{12}}, \quad\JJ: \di_{u_{12}} \mapsto \di_{u_{11}}.$$
It is easy to check that our decomposition of $\dD\otimes\C$ satisfies $[\Dplus, \Dminus] \subset \dD\otimes\C$;
Thus, $\sysI$ is an elliptic decomposable system. 

The terminal derived flag of $\Dplus$ is given by 
\begin{equation}
\Dplus^{(\infty)} =\Dplus^{(2)} = \{ D_1 -\ri D_2, \di_u, \di_{u_1} + \ri \di_{u_2}, \di_{u_{11}} + \ri \di_{u_{12}} \},
\label{LapDPinf}
\end{equation}
and comparing with $\Dminus =  \{ D_1 +\ri D_2, \di_{u_{11}} - \ri \di_{u_{12}}\}$ shows that the criterion \eqref{DDIcond} for 
Darboux integrability is satisfied.  Notice that in this case the integrability is not minimal in the sense of Definition \ref{maxmindef}, as the sum in \eqref{VDIcond} is not direct:
$\Vinf = (\Dminusinf)^\perp$ has rank 3 while $\overline{V}$ has rank 5, so these sub-bundles have a non-trivial intersection
inside $TM \otimes \C$.  The Darboux invariants 
are found by taking the conjugate of \eqref{LapDPinf} and 
solving for functions invariant under $\Dminusinf$; one choice of basis for $\Invts$ is 
the set $\{x^1+\ri x^2, u_1 - \ri u_2, u_{11}-\ri u_{12}\}$.
\end{example}

\bigskip
\begin{example}\label{introE1} Consider the general case of a second-order PDE  in the plane of the form
\begin{equation}\label{Feqn}
F(x^i,u,u_i,u_{ij})=0, \qquad 1\le i,j \le 2,
\end{equation}
where $x^i, u, u_i, u_{ij}$ are coordinates on $J^2(\R^2, \R)$ as in Example \ref{ex:Laplace}.
We assume that \eqref{Feqn} defines a smooth hypersurface $M_F \subset J^2(\R^2, \R)$, and that this hypersurface submerses onto $J^1(\R^2, \R)$.  
As in \eqref{generateplanePDE} we let $\sysI$ be the Pfaffian system generated
by the pullbacks to $M_F$ of standard contact 1-forms.

\def\Volmf{\operatorname{Vol}_{M_F}}
\def\bilin{\langle \,,\,\rangle}
Following \cite{GKsecond}, we define a symmetric bilinear form on sections $\phi,\psi$ of $I_1$ by
\begin{equation}
\langle \phi, \psi \rangle \Volmf = d\phi \wedge d \psi \wedge  \theta^0 \wedge  \theta^1 \wedge  \theta^2,
\label{GKform}
\end{equation}
where $\Volmf$ is a chosen volume form on $M_F$ and $\theta^0, \theta^1, \theta^2$ is any choice of local basis of 1-form generators for $\sysI$.  Note that since $\bilin$ is $C^\infty$-linear, it is well-defined pointwise.
However, with different choices of volume form or basis, the bilinear form changes
by multiplication by a non-vanishing smooth function on $M_F$, so it should be regarded as a {\em conformal} symmetric bilinear form.  

As the form $\bilin$ is only well-defined up to a multiple, its only invariants are its rank and the relative signs of its eigenvalues.  
The first derived system $I'$ is in the kernel of $\bilin$, so the rank of $\bilin$ is at most 2.
In \cite{GKsecond} $\sysI$ is defined to be elliptic if and only if the form $\bilin$ has rank 2 and its nonzero eigenvalues have the same sign at each point.  This implies the classical result that the PDE is elliptic if and only if 
$$
 \dfrac{\di F}{\di {u_{11}}}\dfrac{\di F}{\di {u_{22}}} - \frac{1}{4}\left(\dfrac{\di F}{\di u_{12}}\right)^2 > 0
$$
at each point of $M_F$; similarly, the PDE is hyperbolic if $\bilin$ has rank 2 and this discriminant is everywhere negative
 (see equations (2.21)-(2.24) in \cite{GKsecond} for the details).
 
Assume that $\sysI$ is elliptic.  By choosing generator 1-forms $\theta^0, \theta^1, \theta^2$  that diagonalize $\langle \,,\,\rangle$ over $\R$, we see that there is a choice of local coframe $(\theta^0, \theta^1, \theta^2, \omega^1, \pi^1, \omega^2, \pi^2)$ on $M_F$
which satisfies structure equations
\begin{equation}
\left.
\begin{aligned}
d\theta^0 &\equiv 0 \\
d\theta^1 &\equiv \omega^1 \wedge \pi^1 - \omega^2 \wedge \pi^2,\\
d\theta^2 &\equiv \omega^1 \wedge \pi^2 + \omega^2 \wedge \pi^1
\end{aligned}
\right\} \mod \theta^0, \theta^1, \theta^2.
\label{SEDE1}
\end{equation}
The two sub-bundles of $T^*M\otimes \C$ defined by 
$$
{\mathcal M}  = \spanc\{ \theta^0, \theta^1+ \ri \theta^2 \}, \quad \overline{\mathcal M}  =\spanc \{ \theta^0, \theta^1- \ri \theta^2 \}
$$
are the maximally isotropic subspaces of the $\C$-linear extension of the bilinear form $\langle \,,\,\rangle$ at each point of $M_F$.  
Setting $\beta=\theta^1+\ri \theta^2$, $\omega=\omega^1 + \ri \omega^2$ and $\pi=\pi^1 + \ri \pi^2$, we have from equations \eqref{SEDE1},
\begin{equation}
d\beta \equiv \omega \wedge \pi \quad \mod \theta^0, \beta, {\overline \beta}.
\label{dpde1}
\end{equation}
It follows that
\begin{equation}
\sysI \otimes \C= \{ \theta^0, \beta, {\overline \beta}, \omega \wedge \pi, {\overline \omega} \wedge {\overline \pi} \}\subalg.
\label{decomEPDE}
\end{equation}

We can use the maximally isotropic subspaces to give $\sysI$ the structure of an elliptic decomposable system.  
Letting $\dD = I^\perp$, in terms of the dual frame $( \partial_{\theta^0}, \partial_{\beta},\partial_{\overline{\beta}}, \partial_{\omega},\partial_{\overline{\omega}}, \partial_{\pi},\partial_{\overline{\pi}})$ we have
$$\dD \otimes \C = \spanc\{ \partial_{\omega},\partial_{\overline{\omega}}, \partial_{\pi},\partial_{\overline{\pi}}\}.$$
On the other hand, following (3.18) in \cite{GKsecond} we define the characteristic vector field system
$$\operatorname{Char}(I\otimes \C, d{\mathcal M}) := \{ X \in \dD\otimes \C \mid X \hook d{\mathcal M} \subset I\otimes \C \ \}.$$
and compute that $\operatorname{Char}(I\otimes \C, d {\mathcal M} ) = \spanc\{ \partial_{\overline{\omega}}, \ \partial_{\overline{\pi}}\}$.
Furthermore it is easy to check that $X\in \dD\otimes \C$ spans a singular integral 1-plane if and only if
it lies in $\operatorname{Char}(I\otimes \C, d {\mathcal M} )$ or its conjugate.
Thus, setting 
\begin{equation}\label{EPDEDminus}
\Dminus=\operatorname{Char}(I\otimes \C, d {\mathcal M}), \qquad \Dplus= \overline{\Dminus},
\end{equation}
we have $\DD\otimes\C=\Dplus \oplus \Dminus$ and we define $\JJ:\DD \to \DD $ as in Remark \ref{JfromDplus}.  
(Note that if in \eqref{EPDEDminus} we had instead set $\Dplus=\operatorname{Char}(I\otimes \C, d {\mathcal M} )$ then
$\JJ$ is modified by a minus sign.)
The singular bundle is computed to be
\begin{equation}
V=(\Dminus)^\perp=\spanc\{\, \omega, \pi, \theta^0,\beta,\overline{\beta}\, \}.
\label{VEPDE2}
\end{equation}
It now follows from \eqref{dpde1}, \eqref{decomEPDE}, \eqref{EPDEDminus}, and \eqref{VEPDE2}
that $(\calI, \DD, \JJ)$ satisfies Definition \ref{EDdef} and is an elliptic decomposable system. 
Finally it also follows from the structure equations \eqref{SEDE1} that any 2-dimensional integral manifold 
$s:\Sigma\to M$ satisfying the independence condition $\omega^1 \wedge \omega^2\neq 0$ is $\JJ$-invariant, 
so that $\JJ$ restricts to give an almost complex structure on $\Sigma$ which is integrable if $\sysI$ is Darboux integrable.


The objects ${\mathcal M}, \Dminus, \rV$ in this example are easily shown to be (up to conjugation) differential invariants of $\sysI$. 
If a diffeomorphism $\Phi:M_F\to M_F$ is a symmetry transformation, so that $\Phi^*\sysI=\sysI$, then since pullback and $d$ commute, 
$\langle \Phi^* \phi,  \Phi^* \psi\rangle = \lambda \langle \phi, \psi\rangle$ for some non-vanishing function $\lambda$.
Hence the sub-bundle $\Phi^*{\mathcal M}$ is again a 2-dimensional isotropic subspace for $\bilin$. From this and the form of $\Dminus$ we conclude that either
\begin{equation}\label{PDECont}
 \right]
\label{omegaBH}
\end{equation}

In this case, equation \eqref{omegastruc} implies that the 1-forms given by \eqref{omegaBH} are closed.
(In fact, the $\omega^i$ can be expressed as constant-coefficient linear combinations of the differentials of the functions $u - z u_z - \zbar \overline{u_z} +\tfrac12 |z|^2 v$, $u_z - \tfrac12 v \zbar$, 
$\overline{u_z} - \tfrac12 v z$ and $v$.)
The remaining coordinates $z,p,q,v_z$ and their conjugates restrict to give 
coordinates along the leaves of distribution $\EDelta$ annihilated by the $\omega^i$.
Transverse to this distribution, the generators of the $K$-action on $M$ are the dual vector fields to the $\omega^i$ annihilated by the $\eta, \etabar, \vsigma, \vsigmabar$, i.e.,
\begin{align*}
\vZ_1 &= -\ri \di_u, \\
\vZ_2 &= \tfrac12 (\di_{u_z} - \di_{\overline{u_z}}+ (z-\zbar) \di_u) \\
\vZ_3 &= -\tfrac12 \ri (\di_{u_z} + \di_{\overline{u_z}}+ (z+\zbar) \di_u)\\
\vZ_4 &= -\tfrac12\ri (2\di_v + z \di_{\overline{u_z}}+ \zbar \di_{u_z} + |z|^2 \di_u)
\end{align*}
Note that these vector fields are pure imaginary.  (In general, their real parts only vanish at points along the maximal
integral manifold of $\EDelta$ through the origin; however, since $K$ is Abelian in this case the isotropy subalgebra 
is the same at every point.)  

If we use coordinates $c_j = a_j + \ri b_j$ on $K=\C^4$, then the action of $K$ on $M$ generated by the $\vZ_j$ is
\begin{align*}
u &\mapsto u + 2b_1 + \ri b_2 (z-\zbar) + b_3 (z+\zbar) + b_4 |z|^2,\\
u_z &\mapsto u_z + b_3 +\ri b_2  + b_4 \zbar, \\
\overline{u_z} &\mapsto \overline{u_z} + b_3 - \ri b_2+ b_4 z ,\\
v &\mapsto v  + 2b_4.
\end{align*}
We now apply Theorem \ref{extendexist} to define an integrable extension of $\sysI$.  In the theorem we will take $\mpt$ to be the origin,
and thus $\Slice \subset M$ is the 8-dimensional submanifold defined by $u=0,v=0$ and $u_z=0$.   We use $z,p,q,v_z$ as complex coordinates on $\Slice$, and define the submersion $\gam:\Slice \times K \to M$ by applying the group action to points on $\Slice$:
$$u = 2b_1 + \ri b_2 (z-\zbar) + b_3 (z+\zbar) + b_4 |z|^2,\ u_z = b_3 + \ri b_2  + b_4, \ v=2b_4,$$
Since $K$ is Abelian, the matrix $\Lambda$ on $K$ is equal to the identity matrix, and formula 
\eqref{pimomega} gives $\gam^*\omega^j = dc^j - \overline{dc^j} = 2\ri db^j$.

In this example, the 1-forms $\psi^i$ in equation \eqref{omegaform} are given in explicitly in equation \eqref{omegaBH} as
\begin{equation}
\psi^1 = -\frac{1}{2}\ri \, \,p\, z \dz, \quad \psi^2 = \tfrac12 (p + z \, v_z) dz, \quad \psi^3 = \frac{ 1}{2}\ri ( p - z\, v_z) dz, \quad \psi^4 = \ri v_z \dz.
\label{BHpsi}
\end{equation}
The right- (and left-)invariant 1-forms on $K$ are simply the differentials $dc^j$, so the extension $\sysE$ is the real Pfaffian system
corresponding to the holomorphic Pfaffian system defined in equation \eqref{Hext} using the forms in equation \eqref{BHpsi} given by
\begin{multline*} H = \{ \mu_j^R - \psi^j , \eta \} \\
= \{ dc_1 + \frac{\ri}{2} p z \dz, dc_2 - \tfrac12 (p + z v_z) dz, dc_3 - \frac{\ri}{2} ( p - z v_z) dz, dc_4 - \ri v_z \dz, dv_z - \tfrac12 q\dz \}.
\end{multline*}
The derived flag of $H$ indicates that $H$ can be put in Goursat normal form (see \cite{Yamma}).  In fact, if we make the change of complex coordinates on $S \times K$
\begin{multline*} k_0 = -(c_2+\ri c_3 +\ri c_4 z), \  k_1 = -\ri c_4, \  k_2 = v_z, \  k_3 = \tfrac12 q, \\
  \ell_0 = -2\ri c_1 + (c_2-\ri c_3)z,\  \ell_1 = c_2 - \ri c_3, \  \ell_2 = p,
\end{multline*}
then $H$ becomes a holomorphic contact system:
$$H = \{ dk_0 -k_1 \dz, dk_1 - k_2\dz, dk_2 - k_3\dz, d\ell_0 - \ell_1 \dz, d\ell_1 - \ell_2 \dz \}.$$
Of course, a solution to this system can be given by $k_0 = f(z)$, $\ell_0 = g(z)$, $k_1 = f'(z)$, $\ell_1 = g'(z)$, etc., 
for arbitrary holomorphic functions $f,g$.   Since, in terms of these coordinates,
$$u = \Re(\ell_0 + \zbar k_0),$$
then we have the solution formula $u= \Re( g(z) + \zbar f(z))$ for the biharmonic equation.

\end{example}
\fi

\section*{Remarks on Classical Darboux Integrability and Closed Form Solutions}\label{ClosedForm}

Darboux integrable equations often admit closed form general solutions as seen in the example of the  Liouville equation in \eqref{ELpm}. 
Both forms of the elliptic Liouville equation  admit the holomorphic Darboux invariant
$$
\xi = u_{zz} -\frac{1}{2} u_z^2.
$$
It is easy to check by taking the derivative $\partial_{\overline{z}} \xi$  which when evaluated on a solution to the Liouville equation $u_{xx}+u_{yy}=\pm 2 e^u$  
is zero and hence $\xi$ defines a holomorphic function when restricted to a solution of the elliptic Liouville equation. In particular this implies that if $h(z)$ is any holomorphic function, then
\begin{equation}
u_{xx}+u_{yy} = \pm 2 e^u, \qquad u_{zz} -\frac{1}{2} u_z^2 = h(z)
\label{TDE}
\end{equation}
define an  integrable system of total differential equations (where all second derivatives are known and all integrability conditions are satisfied) for $u(x,y)$.  This demonstrates the classical notion of Darboux integrability. If  we substitute say $u_{-}(x,y) =\ln \frac{ 4 f(z)' \overline {f(z)'} }{(1+f (z)\overline{f(z)})^2}$ from equation \eqref{ELpm} into the second equation \eqref{TDE} we get
\begin{equation}
\frac{f'''}{f'}-\frac{3}{2} \left(\frac{f''}{f'}\right)^2=h(z)
\label{Pcurve}
\end{equation}
where the left hand side is the Schwarzian derivative of $f(z)$. It is not possible to find $f(z)$  in closed form in terms of $h(z)$ using quadratures. This is a consequence of the Vessiot algebra for these equations being simple and that equation \eqref{Pcurve} is an equation of Lie type on $SL(2,\C)$. So while we have a closed form formula for the general solution given in equation \eqref{ELpm}, we have no closed form formula  for  $f(z)$  in terms of $h(z)$ in equation \eqref{Pcurve}. Also of note is that equation \eqref{Pcurve} is the prescribed curvature problem for holomorphic curves in $\C{\rm P}^1$ with the standard $SL(2,\C)$ action \cite{Flanders}.

If the Vessiot algebra is solvable as in equation 
\begin{equation*}
\dfrac{\di W}{\di \zbar} = \tfrac12 |W|^2,
\end{equation*}
then the closed form solution with prescribed holomorphic invariant can be obtained by quadrature. For this example the general solution is derived in Example \ref{benexample2} to be
$$
W = \dfrac{2f'(z)}{\overline{f(z)}-f(z)},
$$
while a holomorphic Darboux invariant is given in equation \eqref{QDI2} of Example \ref{benexample2} as
$$
\xi = \frac{W_z }{W} -\frac{W}{2} .
$$
In this case the prescribed Darboux invariant problem $\xi=h(z)$ can be solved by quadrature
$$
\frac{f''(z)}{f'(z)} = h(z)\qquad {\rm and} \qquad f(z) = \int {\rm e} ^{\int h(z) dz} dz.
$$



%




\end{document}